\long\def\delete#1{}
\definecolor{VeryLightBlue}{rgb}{0.9,0.9,1}
\definecolor{LightBlue}{rgb}{0.8,0.8,1}
\definecolor{MidBlue}{rgb}{0.5,0.5,1}
\definecolor{DarkBlue}{rgb}{0,0,0.6}
\definecolor{Blue}{rgb}{0,0,1}
\definecolor{Gold}{rgb}{1,0.843,0}
\definecolor{LightGreen}{rgb}{0.88,1,0.88}
\definecolor{MidGreen}{rgb}{0.6,1,0.6}
\definecolor{DarkGreen}{rgb}{0,0.6,0}
\definecolor{VeryLightYellow}{rgb}{1,1,0.9}
\definecolor{LightYellow}{rgb}{1,1,0.6}
\definecolor{MidYellow}{rgb}{1,1,0.5}
\definecolor{DarkYellow}{rgb}{1,1,0.2}
\definecolor{DarkPurple}{rgb}{.6,0,1}
\definecolor{Red}{rgb}{1,0,0}
\definecolor{VeryLightRed}{rgb}{1,0.9,0.9}
\definecolor{LightRed}{rgb}{1,0.8,0.8}
\definecolor{MidRed}{rgb}{1,0.55,0.55}
\newcommand{\blue}[1]{{\color{Blue}{#1}}}
\newcommand{\sm}[1]{\blue{\bf [by Sanming: #1]}}
\newcommand{\red}[1]{{\color{Red}{#1}}}
\newcommand{\rk}[1]{\red{\bf [by Ricky: #1]}}
\def\qed{\hfill$\Box$\vspace{12pt}}
\def\la{\langle}
\def\ra{\rangle}
\def\ZZZ{\Bbb Z}
\def\BB{{\cal B}}
\def\DD{{\cal D}}
\def\bfB{{\bf B}}
\def\bfP{{\bf P}}
\def\b0{{\bf 0}}
\def\De{\Delta}
\def\Ga{\Gamma}
\def\a{\alpha}
\def\b{\beta}
\def\d{\delta}
\def\g{\gamma}
\def\l{\lambda}
\def\Aut{{\rm Aut}}
\def\PSL{{\rm PSL}}
\def\SL{{\rm SL}}
\def\AGL{{\rm AGL}}
\def\PG{{\rm PG}}
\def\GF{{\rm GF}}
\def\PGammaSp{{\rm P\Gamma Sp}}
\def\PSp{{\rm PSp}}
\def\val{{\rm val}}
\def\gcd{{\rm gcd}}
\def\lcm{{\rm lcm}}
\newtheorem{theorem}{Theorem}[section]
\newtheorem{lemma}[theorem]{Lemma}
\newtheorem{corollary}[theorem]{Corollary}
\newtheorem{definition}[theorem]{Definition}
\newtheorem{example}[theorem]{Example}
\newcommand{\be}{\begin{equation}}
\newcommand{\ee}{\end{equation}}
\newcommand{\bea}{\begin{eqnarray}}
\newcommand{\eea}{\end{eqnarray}}
\newcommand{\bean}{\begin{eqnarray*}}
\newcommand{\eean}{\end{eqnarray*}}
\begin{document}

\title{Cores of imprimitive symmetric graphs of order a product of two distinct primes}
\author{Ricky Rotheram and Sanming Zhou \smallskip \\
\small School of Mathematics and Statistics\\ 
\small The University of Melbourne\\ 
\small Parkville, VIC 3010, Australia}

\date{}

\openup 0.25\jot

\maketitle

\begin{abstract}
A retract of a graph $\Ga$ is an induced subgraph $\Psi$ of $\Ga$ such that there exists a homomorphism from $\Ga$ to $\Psi$ whose restriction to $\Psi$ is the identity map. A graph is a core if it has no nontrivial retracts. In general, the minimal retracts of a graph are cores and are unique up to isomorphism; they are called the core of the graph. A graph $\Ga$ is $G$-symmetric if $G$ is a subgroup of the automorphism group of $\Ga$ that is transitive on the vertex set and also transitive on the set of ordered pairs of adjacent vertices. If in addition the vertex set of $\Ga$ admits a nontrivial partition that is preserved by $G$, then $\Ga$ is an imprimitive $G$-symmetric graph. In this paper cores of imprimitive symmetric graphs $\Ga$ of order a product of two distinct primes are studied. In many cases the core of $\Ga$ is determined completely. In other cases it is proved that either $\Ga$ is a core or its core is isomorphic to one of two graphs, and conditions on when each of these possibilities occurs is given.  

\medskip
\emph{Key words:} graph homomorphism; core graph; core of a graph; symmetric graph; arc-transitive graph

\medskip
\emph{AMS Subject Classification (2010):} 05C60, 05C25
\end{abstract}

\section{Introduction}
\label{sec:intro}

All graphs in this paper are finite and undirected without loops or multi-edges. The \textit{order} of a graph is its number of vertices. A \textit{homomorphism} from a graph $\Gamma$ to a graph $\Psi$ is a map $\phi: V(\Gamma) \rightarrow V(\Psi)$ such that whenever $x, y\in V(\Gamma)$ are adjacent in $\Gamma$, $\phi(x)$ and $\phi(y)$ are adjacent in $\Psi$. The subsets $\phi^{-1}(v) := \{x \in V(\Ga): \phi(x) = v\}$ of $V(\Ga)$, $v \in V(\Psi)$ are called the {\em fibres} of $\phi$. It is readily seen that all fibres are (possibly empty) independent sets of $\Ga$ (see e.g. \cite[Proposition 2.11]{MR1468789}). Whenever there exists a homomorphism $\phi$ from $\Ga$ to $\Psi$, we denote $\phi: \Gamma \rightarrow \Psi$ or simply $\Gamma \rightarrow \Psi$. For example, if $\Ga$ is a subgraph of $\Psi$, then $\Gamma \rightarrow \Psi$ by the \textit{inclusion homomorphism}, that is, the homomorphism that maps each vertex of $\Ga$ to itself. 

A homomorphism $\phi$ from $\Gamma$ onto an induced subgraph $\Psi$ of $\Ga$ is called a \textit{retraction} if the restriction of $\phi$ to $V(\Psi)$ (denoted by $\phi \mid_{\Psi}$) is the identity map; in this case $\Psi$ is called a \textit{retract} of $\Gamma$. A graph is called a \textit{core} if it has no nontrivial retracts. In general, the minimal retracts of a graph are cores and are unique up to isomorphism. So we can speak of \textit{the core} of a graph $\Gamma$, denoted by $\Gamma^\ast$. Thus there exists a retraction $\phi: \Gamma \rightarrow \Ga^*$ (so that $\phi\mid_{\Gamma^\ast}$ is the identity map from $V(\Ga^*)$ to $V(\Ga^*)$). A homomorphism from $\Ga$ to itself is called an {\em endomorphism} of $\Ga$. A core can be equivalently defined (see e.g. \cite[Proposition 2.22]{MR1468789}) as a graph whose endomorphisms are all automorphisms. 

A core can also be defined by virtue of the homomorphism equivalence relation.  
Two graphs $\Ga$ and $\Psi$ are said to be {\em homomorphically equivalent}, denoted by $\Gamma \leftrightarrow \Psi$, if we have both $\Gamma \rightarrow \Psi$ and $\Psi \rightarrow \Ga$. This defines an equivalence relation that is coarser than isomorphism. It can be verified that each equivalence class contains a unique graph (up to isomorphism) with smallest order; such a graph is a core, or the core of any graph in the class. 

Cores play an important role in the study of homomorphisms and graph colourings. For instance, a graph has a complete graph as its core if and only if its clique and chromatic numbers are equal, and any graph and its core have the same chromatic number. Unfortunately, in general it is difficult to determine the core of a graph. In fact, not many families of graphs whose cores have been determined are known so far, the simplest being non-empty bipartite graphs of which the cores are the complete graph $K_2$ of order $2$. The reader is referred to \cite{MR1468789} for a survey on homomorphisms, retracts and cores of graphs. 

In \cite[Theorem 3.7]{MR1468789} it was proved that the core of any vertex-transitive graph is vertex-transitive. In \cite[Theorem 3.9]{MR1468789} it was proved further that, for a vertex-transitive graph $\Ga$, the order of $\Ga^*$ divides the order of $\Ga$. In particular, vertex-transitive graphs of prime orders are cores. In \cite{R} the problem about when the vertex set of a vertex-transitive graph can be partitioned into subsets each inducing a copy of its core was studied. It was proved that Cayley graphs with connection sets closed under conjugation and vertex-transitive graphs with cores half their order admit such partitions. In \cite[Theorem 7.9.1]{MR1829620} it was proved that Kneser graphs (which are vertex-transitive) are cores. Complete graphs (which are clearly vertex-transitive) are also cores.  

The proof of \cite[Theorem 3.7]{MR1468789} can be extended to prove that the core of a symmetric (arc-transitive) graph is also symmetric (see Theorem \ref{thm:sym cores}). Two-arc-transitive graphs form a proper subfamily of the family of symmetric graphs, and in \cite[Theorem 6.13.5]{MR1829620} it was proved that any connected non-bipartite two-arc-transitive graph is a core. A \textit{rank-three graph} is a graph whose automorphism group is transitive on vertices, ordered pairs of adjacent vertices and ordered pairs of non-adjacent vertices. Thus rank-three graphs are necessarily symmetric and strongly regular. In \cite{MR2470534} it was proved (as a consequence of a more general result) that if $\Ga$ is a rank-three graph then either $\Ga$ is a core or $\Ga^*$ is a complete graph. In the same paper the authors asked whether the same result holds for all strongly regular graphs. This was confirmed in \cite{MR2813515} for two families of strongly regular graphs that are not necessarily rank-three graphs. In \cite{MR2813515} it was also proved that any distance-transitive graph is either a core or has a complete core. 

In this paper we study the cores of imprimitive symmetric graphs of order a product of two distinct primes. All symmetric graphs of order a product of two distinct primes were classified in \cite{MR884254, MR1223702, MR1244933, MR1223693}, and many interesting graphs arose from this classification. (In fact, all vertex-transitive graphs of order a product of two distinct primes were classified in \cite{MR1244933} and \cite{MR1289072} independently.) In \cite{D} imprimitive automorphism groups of metacirculant graphs of order a product of two distinct primes were classified. This together with previously known results completed the classification of automorphism groups of vertex-transitive graphs of order a product of two distinct primes. The fact that imprimitive symmetric metacirculants are circulants can be derived from \cite{D} and will be used in our study in the present paper. 

The main result in this paper is as follows. (The graphs in Tables \ref{tab:cir} and \ref{tab:2} will be defined in \S\ref{subsec:circu}, \S\ref{sec:incidence} and \S\ref{subsec:ms}. All graphs $\Ga$ in Table \ref{tab:cir} are circulant graphs as we will justify in \S\ref{subsec:circu}.)

\begin{theorem}
\label{thm:main}
Let $p$ and $q$ be primes with $2 \le p<q$, and let $\Gamma$ be an imprimitive symmetric graph of order $pq$. Then the core $\Gamma^\ast$ of $\Ga$ is given in the third column of Tables \ref{tab:cir} and \ref{tab:2}.
\end{theorem}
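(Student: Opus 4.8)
The plan is to run through the classification of symmetric graphs of order $pq$ obtained in \cite{MR884254, MR1223702, MR1244933, MR1223693}, keep only the imprimitive ones, and combine this with the determination of their automorphism groups in \cite{D}, which shows in particular that the metacirculants among them are circulants. This sorts the graphs $\Ga$ in the statement into three structural families: the circulant graphs of Table \ref{tab:cir}; the bipartite incidence graphs of \S\ref{sec:incidence}; and the Maru\v{s}i\v{c}--Scapellato graphs of \S\ref{subsec:ms} --- the last two together making up Table \ref{tab:2}. Three general facts will drive the analysis. First, $\Ga^*$ is again a symmetric graph, by Theorem \ref{thm:sym cores}. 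Second, since $\Ga$ is vertex-transitive, $|V(\Ga^*)|$ divides $pq$ by \cite[Theorem 3.9]{MR1468789}, so $|V(\Ga^*)| \in \{1,p,q,pq\}$; hence, as soon as $\Ga$ has an edge, either $\Ga$ is a core or $\Ga$ retracts onto a symmetric subgraph of prime order $p$ or $q$. Third, the quantity $|V(\Ga)|/\alpha(\Ga)$ --- for a vertex-transitive graph this equals the fractional chromatic number, and it is non-decreasing under homomorphisms --- forces $\alpha(\Ga^*)/|V(\Ga^*)| = \alpha(\Ga)/pq$, which sharply restricts any proper retract.

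First I would clear away the easy families. If $\Ga$ is bipartite with at least one edge --- in particular every incidence graph in \S\ref{sec:incidence} --- then $\Ga^* \cong K_2$. If $\omega(\Ga) = \chi(\Ga)$, for instance when $\Ga$ is complete multipartite, then $\Ga^* \cong K_{\omega(\Ga)}$ and, by the divisibility fact, $\omega(\Ga) \in \{p,q,pq\}$. Any graph in the list that is connected, non-bipartite and two-arc-transitive is a core by \cite[Theorem 6.13.5]{MR1829620}; and any rank-three or distance-transitive graph in the list is, by \cite{MR2470534, MR2813515}, either a core or satisfies $\Ga^* \cong K_{\omega(\Ga)}$, the alternative being decided by whether $\chi(\Ga) = \omega(\Ga)$ for the relevant parameters. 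These observations settle a large proportion of the entries of both tables.

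The substantive work concerns the non-bipartite graphs $\Ga$ with $\omega(\Ga) < \chi(\Ga)$ that are neither two-arc-transitive nor rank-three. For such $\Ga$ the divisibility fact says that $\Ga^*$ is $\Ga$ itself or a symmetric graph of order exactly $p$ or $q$; the symmetric graphs of a prime order $r$ are readily listed (the empty graph, $K_r$, and the circulants $\Cay(\ZZZ_r,H)$ with $H$ a subgroup of $\ZZZ_r^\ast$ of even order), so the question becomes whether $\Ga$ admits a homomorphism onto a graph on this short list. I would rule such homomorphisms out by comparing the fractional chromatic number, the odd girth, and the independence ratio $\alpha(\Ga)/pq$ against the corresponding invariants of each candidate image, and, for any candidate surviving all of these, by using the block system $\BB$ to show that the required fibre structure cannot occur. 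In the residual cases where two non-isomorphic prime-order symmetric graphs survive every coarse test and no outright obstruction presents itself, the conclusion is exactly the alternative asserted in the theorem: $\Ga$ is a core, or $\Ga^*$ is one of those two graphs, the dichotomy being governed by an explicit congruence or divisibility condition on $p$ and $q$, as recorded in Table \ref{tab:2}.

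I expect the hardest single point to be proving that certain of these graphs are in fact cores --- that is, establishing the \emph{non}-existence of a homomorphism from $\Ga$ onto a prime-order symmetric graph. No general method applies; each such case must be treated directly from the automorphism group and the block-and-quotient structure of $\Ga$, typically by arguing that a hypothetical retraction would create an odd closed walk shorter than the odd girth of the target, or an independent set of forbidden size.
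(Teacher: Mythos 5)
Your top-level architecture coincides with the paper's: run through the classification into incidence graphs, circulants and Maru\v{s}i\v{c}--Scapellato graphs, dispose of the bipartite and lexicographic-product cases quickly, and for the rest use the facts that $\Ga^*$ is symmetric, that $|V(\Ga^*)|$ divides $pq$ (so $\Ga^*$ is $\Ga$ itself or a Chao graph $G(n,d)$ with $n\in\{p,q\}$), and the No-Homomorphism ratio $\alpha(\Ga^*)/|V(\Ga^*)|=\alpha(\Ga)/pq$. Those are exactly the tools the paper leans on. However, the proposal has two genuine gaps where the stated methods would not deliver the entries of the tables.

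First, for $G(pq;r,s,u)$ (rows 13--18 of Table \ref{tab:cir}) the outcome is \emph{not} ``governed by an explicit congruence or divisibility condition on $p$ and $q$,'' and it cannot be decided by comparing fractional chromatic number, odd girth, or independence ratio: the paper's answer is that $\Ga^*\cong G(p,s)$ if and only if there exists a homomorphism $\eta:G(p,s)\rightarrow G(q,u)$ satisfying the arithmetic constraint $\eta(j)-\eta(i)\in t^lH(q,r)$ on every arc (and dually for $G(q,u)$), a condition the paper explicitly notes is hard in general. Reaching that characterization requires more than showing ``the required fibre structure cannot occur'': one must prove that any proper retract is isomorphic to $G(p,s)$ or $G(q,u)$ \emph{with fibres equal to the coordinate classes} $\{(i,x):x\in\ZZZ_q\}$ or $\{(i,x):i\in\ZZZ_p\}$. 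The paper does this via Lemmas \ref{lem:pqrsu semi}--\ref{lem:pqrsu ind}, tracking how a retraction composed with powers of the shift $\gamma$ acts on the core (using that $\Aut(G(n,d))$ is a Frobenius group, so a fixed-point-free automorphism is a translation $k\mapsto k+b$) and then an induction to show $\gamma^{jp}$ lands inside a single fibre. Nothing in your outline produces this step. Second, for the MS graphs your plan to ``compare the independence ratio'' presupposes knowing $\alpha(\Ga)$ and $\omega(\Ga)$; computing usable bounds for these \emph{is} the hard content (Theorems \ref{thm:clq>p}, \ref{thm:clq<q}, \ref{thm:pq ms ind<q}), and it rests on subfield cliques of $\GF(2^a)$ and the pairwise-coprimality recurrence $F_t=F_0\cdots F_{t-1}+2$ for Fermat numbers, plus a separate Frobenius-group argument (Lemma \ref{thm:one vertex per block}, Theorem \ref{thm:ms core complete}) showing that a $q$-vertex core of an MS graph must be complete. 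Also, in case (d) of Theorem \ref{thm:pq ms cores} the conclusion $\Ga^*\cong K_p$ is obtained by embedding $\Ga$ as a spanning subgraph of the rank-three graph $W_3(2^{2^{s-1}})$ and invoking the Cameron--Kazanidis result together with the valency divisibility of Theorem \ref{thm:val cores} to exclude $K_q$; your appeal to ``whether $\chi(\Ga)=\omega(\Ga)$'' does not by itself decide between $K_p$ and $K_q$ there. So the skeleton is right, but as written the proposal would not establish rows 13--18 of Table \ref{tab:cir} or rows 5--10 of Table \ref{tab:2}.
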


As shown in Tables \ref{tab:cir}-\ref{tab:2}, in many cases we determine $\Gamma^\ast$ completely. In other cases we prove that either $\Ga$ is a core or $\Ga^*$ is isomorphic to one of two graphs. As seen in rows 10-12 of Table \ref{tab:cir}, determining the core of $G(q,r)[\overline{K}_{p}]-pG(q,r)$ is reduced to the problems of computing the chromatic and clique numbers of a circulant graph. Unfortunately, the latter problems are both NP-hard even for circulant graphs \cite{CGV}. (In fact, determining the clique number remains NP-hard even for circulant graphs of prime orders \cite[Theorem 2]{CGV}.) Nevertheless, we notice that if $r \le p$ then $\chi(G(q,r)) \le r \leq p$ by Brooks' theorem and so the core of $G(q,r)[\overline{K}_{p}]-pG(q,r)$ is $G(q,r)$. By rows 13-18 of Table \ref{tab:cir}, determining the core of $G(pq;r,s,u)$ is reduced to the problem of deciding whether there exists a homomorphism, or a homomorphism with certain properties, between $G(p,s)$ and $G(q,u)$. The latter problem is, unfortunately, difficult in general. We notice that the condition $G(p,s)\rightarrow G(q,u)$ in row 13 of Table \ref{tab:cir} can not be satisfied unless $\omega(G(p,s)) = \omega(G(q,u)) = \omega(G(pq;r,s,u))$. (In fact, if $G(p,s)\rightarrow G(q,u)$, then the core of $G(pq;r,s,u)$ is $G(p, s)$ and hence $\omega(G(q,u)) \le \omega(G(p,s)) = \omega(G(pq;r,s,u)) = \min\{\omega(G(p,s)), \omega(G(q,u))\}$ by Lemma \ref{prop:inva} and \cite[Observation 5.1]{MR1468789}.) Similarly, the condition $G(q,u)\rightarrow G(p,s)$ in row 14 can not be satisfied unless these three clique numbers are equal. 

The proof of Theorem \ref{thm:main} relies on the classification of imprimitive symmetric graphs of order a product of two distinct primes, obtained in \cite[Theorem 2.4]{MR884254}, \cite[Theorems 3-4]{MR1223693} and \cite[Theorem]{MR1223702} collectively. These graphs are given in the second column of Tables \ref{tab:cir} and \ref{tab:2} (where $K_n$ is the complete graph of order $n$ and $\overline{K}_n$ its complement), and their definitions will be given in \S\ref{subsec:circu}, \S\ref{sec:incidence} and \S\ref{subsec:ms}, respectively. Along the way to the proof of Theorem \ref{thm:main}, we will prove some properties of such graphs; see Lemmas \ref{lem:3qr}, \ref{lem:pqrsu cat prod} and Theorems \ref{thm:clq>p}, \ref{thm:clq<q} and \ref{thm:pq ms ind<q}. 

\begin{center}
\begin{table} 
  \begin{tabular}{p{0.7cm} | p{4.4cm} | p{2.3cm} | p{5.6cm} | p{0.9cm}}
    \hline 
Row &  $\Gamma$ & $\Gamma^\ast$ & Condition  & Proof \\ \hline 
    
1 &    $G(2q,r)$  & $K_2$ &  $q \ge 3$ & L\ref{lem:2qr} \\  \hline
    
2 &    $G(2,q,r)\cong G(q,r)[\overline{K}_2]$ & $G(q,r)$ &  $q \ge 3$ & T\ref{lem:2qr1}\\ \hline
    
3 &    $G(3q,r)\cong G(3q;r,2,r)$  & $G(3q;r,2,r)^*$ & $q \ge 5$, $r$ even  & L\ref{lem:3qr}\\ \hline
    
4 &    $G(3q,r)\cong G(3q;r,2,2r)$  & $G(3q;r,2,2r)^*$ & $q \ge 5$, $r$ odd  & L\ref{lem:3qr}\\ \hline 
    
5 &    $K_3[\overline{K}_{q}]$  & $K_3$  & $q \ge 5$ & T\ref{thm:lexprod} \\ \hline
    
6 &    $G(q,r)[\overline{K}_{3}]$  & $G(q,r)$ &  $q \ge 5$ & T\ref{thm:lexprod}\\ \hline
    
7 &    $G(p,s)[\overline{K}_{q}]$  & $G(p,s)$ &  $p \ge 2$ & T\ref{thm:lexprod} \\ \hline
    
8 &    $G(q,r)[\overline{K}_{p}]$  & $G(q,r)$ & $p \ge 5$ & T\ref{thm:lexprod} \\ \hline
    
9 &    $G(p,s)[\overline{K}_{q}]-qG(p,s)$  & $G(p,s)$ &  $p \ge 5$ & T\ref{thm:del lex prod}\\ \hline
    
10 &    $G(q,r)[\overline{K}_{p}]-pG(q,r)$  & $G(q,r)$ & $p \ge 5$, $\chi(G(q,r))\leq p$  & T\ref{thm:del lex prod}\\ \hline
    
11 &    $G(q,r)[\overline{K}_{p}]-pG(q,r)$  & $K_p $ &  $p \ge 5$, $\omega(G(q,r))\geq p$  & T\ref{thm:del lex prod} \\ \hline
    
12 &    $G(q,r)[\overline{K}_{p}]-pG(q,r)$  & $\Gamma$ &  $p \ge 5$, $\chi(G(q,r)) > p > \omega(G(q,r))$  & T\ref{thm:del lex prod} \\ \hline
    
13 &    $G(pq;r,s,u)$, $t\in H(q,r)$  & $G(p,s)$ & $p\geq 3$, $G(p,s)\rightarrow G(q,u)$  & T\ref{thm:pqrsu cat} \\ \hline
    
14 &    $G(pq;r,s,u)$, $t\in H(q,r)$  & $G(q,u)$ & $p\geq 3$, $G(q,u)\rightarrow G(p,s)$  & T\ref{thm:pqrsu cat} \\ \hline
    
15 &    $G(pq;r,s,u)$, $t\in H(q,r)$  & $\Gamma$ & $p\geq 3$, $G(p,s)\nrightarrow G(q,u)$, $G(q,u)\nrightarrow G(p,s)$  & T\ref{thm:pqrsu cat} \\ \hline
    
16 &    $G(pq;r,s,u)$, $t\notin H(q,r)$  & $G(p,s)$ & $p\geq 3$, $\exists \eta: G(p,s)\rightarrow G(q,u)$ such that each arc $(i,j)$ of $G(p,s)$ with $j-i=a^l$ satisfies $\eta(j)-\eta(i) \in t^l H(q,r)$  & T\ref{thm:pqrsu} \\ \hline
    
17 &    $G(pq;r,s,u)$, $t\notin H(q,r)$  & $G(q,u)$ & $p\geq 3$, $\exists \zeta: G(q,u)\rightarrow G(p,s)$ such that each arc $(x,y)$ of $G(q,u)$ with $y-x\in t^l H(q,r)$ satisfies $\zeta(y) - \zeta(x) = a^l$ & T\ref{thm:pqrsu} \\ \hline
    
18 &    $G(pq;r,s,u)$, $t\notin H(q,r)$ & $\Gamma$ & $p\geq 3$, neither $\eta$ nor $\zeta$ above exists & T\ref{thm:pqrsu} \\ \hline 
    \end{tabular}
    \caption{\small Imprimitive symmetric circulant graphs of order $pq$ ($2 \le p < q$) and their cores. In row 7, if $p=3$, then the graph $\Ga$ is $K_3[\overline{K}_{q}]$ and in this case the result is the same as that given in row 5. Acronym: L = Lemma, T = Theorem, $\chi =$ chromatic number, $\omega =$ clique number.}
    \label{tab:cir}
\end{table}
\end{center}

\begin{center}
\begin{table} 
  \begin{tabular}{p{0.7cm} | p{3.6cm} | p{0.5cm} | p{8.1cm} | p{0.9cm}}
    \hline
Row &   $\Gamma$ & $\Gamma^\ast$ & Condition  & Proof \\ \hline
    
1 &   $X(\PG(d-1,r))$ & $K_2$ & $p = 2$, $q = \frac{r^d-1}{r-1}$ & E\ref{ex:inc} \\ \hline
    
2 &    $X'(\PG(d-1,r))$ & $K_2$ & $p = 2$, $q = \frac{r^d-1}{r-1}$ & E\ref{ex:inc} \\ \hline
    
3 &    $X(H(11)) \cong G(22, 5)$  & $K_2$ & $p=2$, $q=11$ & E\ref{ex:inc} \\ \hline
        
4 &    $X'(H(11))$  & $K_2$ & $p=2$, $q=11$ & E\ref{ex:inc} \\ \hline
    
5 &    $\Gamma(2,3,\emptyset,\{1,2\})$ & $K_5$ & $p=3$, $q=5$ & T\ref{thm:pq ms cores} \\ \hline
    
6 &    $\Gamma(a,3,\emptyset,\{1,2\})$ & $\Gamma$ & $p = 3$, $q = 2^a + 1 > 5$ with $a = 2^s$ & T\ref{thm:pq ms cores} \\ \hline
    
7 &    $\Gamma(a,3,\emptyset,\{0\})$  & $\Gamma$ & $p = 3$, $q = 2^a + 1 \ge 5$ with $a = 2^s$ & T\ref{thm:pq ms cores} \\ \hline
    
8 &    $\Gamma(a,p,\emptyset,\{0\})$ & $K_p$ & $p=2^{2^{s-1}}+1 \ge 5$, $q = 2^a + 1 > 5$ with $a = 2^s$  & T\ref{thm:pq ms cores} \\ \hline
    
9 &    $\Gamma(a,p,\emptyset,\{0\})$ & $\Gamma$ & $5 \le p < 2^{2^{s-1}}+1$, $q = 2^a + 1 > 5$ with $a = 2^s$  & T\ref{thm:pq ms cores} \\ \hline
    
10 &    $\Gamma(a,p,\emptyset,U_{e, i})$ & $\Gamma$ & $p \ge 5$, $q = 2^a + 1 > 5$ with $a = 2^s$, $U_{e, i} = \{i2^{ej}: 0 \leq j < d/e\}$ for some $i \in \ZZZ_p^*$ and divisor $e \ge 1$ of $\gcd(d, a)$ with $1 < d/e < p-1$, where $d$ is the order of $2$ in $\ZZZ_p^*$ & T\ref{thm:pq ms cores} \\ \hline
    \end{tabular}
    \caption{\small Symmetric incidence and Maru\v{s}i\v{c}-Scapellato graphs of order $pq$ ($2 \le p < q$) and their cores. Acronym: E = Example, T = Theorem.}
    \label{tab:2}
\end{table}
\end{center}

\section{Preliminaries}
\label{sec:pre}

This section consists of definitions and known results that will be used in subsequent sections. 

Let $G$ be a group acting on a set $V$. That is, to every pair $(g, v) \in G \times V$ there corresponds $g(v) \in V$ such that $1_{G}(v) = v$ and $g(h(v)) = (gh)(v)$ for $g, h \in G$ and $v \in V$, where $1_G$ is the identity element of $G$. The \textit{$G$-orbit} containing $v \in V$ is defined as $G(v) := \{g(v): g \in G\}$, and the \textit{stabilizer} of $v$ under $G$ is the subgroup $G_{v} := \{g \in G: g(v) = v\}$ of $G$. $G$ is \textit{transitive} on $V$ if $G(v) = V$ for some (and hence all) $v \in V$, \textit{semiregular} on $V$ if $G_{v} = 1$ for every $v \in V$, and \textit{regular} on $V$ if it is both transitive and semiregular on $V$. A partition $\BB$ of $V$ is called \textit{$G$-invariant} if for any $g \in G$ and each \textit{block} $B \in \BB$, $g(B) := \{g(v): v \in B\} \in \BB$, and is \textit{nontrivial} if $1 < |B| < |V|$ for some $B \in \BB$. If $V$ admits a nontrivial $G$-invariant partition, then $G$ is \textit{imprimitive} on $V$  (and each block of this partition is a \textit{block of imprimitivity} for $G$ in its action on $V$); otherwise, $G$ is \textit{primitive} on $V$. A group $G$ acting on $V$ is a \textit{Frobenius group} if it is transitive, non-regular, and only the identify element of $G$ can fix two points of $V$. 

A graph $\Gamma$ is called \textit{$G$-vertex-transitive} if $\Gamma$ admits $G$ as a group of automorphisms acting transitively on $V(\Gamma)$. If in addition $G$ is transitive on the set of arcs of $\Ga$, then $\Gamma$ is called a \textit{$G$-symmetric graph}, where an {\em arc} is an ordered pair of adjacent vertices. A graph $\Gamma$ is \textit{vertex-transitive} (\textit{symmetric}, respectively) if it is $G$-vertex-transitive ($G$-symmetric, respectively) for some $G \le \Aut(\Gamma)$, where $\Aut(\Gamma)$ is the automorphism group of $\Ga$. A $G$-vertex-transitive graph is \textit{imprimitive} or \textit{primitive} according to whether $G$ is imprimitive or primitive on $V(\Gamma)$. In a vertex-transitive graph $\Ga$ all vertices have the same valency, which is called the \textit{valency} of $\Ga$ and is denoted by $\val(\Ga)$. 

Given a group $G$ and a subset $S$ of $G \setminus \{1_G\}$ such that $S = S^{-1} := \{s^{-1}: s \in S\}$, the \textit{Cayley graph} of $G$ relative to $S$ is the graph with vertex set $G$ such that $x, y$ are adjacent if and only if $x^{-1}y \in S$.  
A circulant graph is a Cayley graph on a cyclic group. More specifically, for the cyclic group $\ZZZ_n$ of integers modulo $n$ and a subset $S \subseteq \ZZZ_n \setminus \{0\}$ such that  $S = -S := \{-s: s \in S\}$, the \textit{circulant graph} of order $n$ relative to $S$ is the graph with vertex set $\ZZZ_n$ such that $x, y \in \ZZZ_n$ are adjacent if and only if $y-x \in S$. It is well known that Cayley graphs are vertex-transitive, and a graph is isomorphic to a circulant graph if and only if its automorphism group contains a cyclic subgroup regular on the vertex set. 

\begin{lemma}[{\cite[Lemma 6.2.3]{MR1829620}}]
\label{lem:hom eq}
Two graphs are homomorphically equivalent if and only if their cores are isomorphic. In particular, any graph is homomorphically equivalent to its core.  
\end{lemma}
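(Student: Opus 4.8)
The plan is to prove both directions using only the fundamental facts recalled in the introduction: that every graph $\Ga$ admits a retraction $\phi: \Ga \to \Ga^*$ onto its core, and that a core is characterized by having all its endomorphisms be automorphisms (equivalently, having no nontrivial retracts). First I would settle the ``in particular'' clause, since it is immediate and will be used repeatedly: given any graph $\Ga$, the retraction $\phi: \Ga \to \Ga^*$ gives $\Ga \to \Ga^*$, while the inclusion homomorphism gives $\Ga^* \to \Ga$ (as $\Ga^*$ is an induced subgraph of $\Ga$); hence $\Ga \leftrightarrow \Ga^*$.

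For the ``if'' direction, suppose $\Ga^* \cong \Psi^*$. Composing the homomorphisms $\Ga \to \Ga^* \to \Psi^* \to \Psi$ (using $\Ga \leftrightarrow \Ga^*$, the assumed isomorphism, and $\Psi \leftrightarrow \Psi^*$) yields $\Ga \to \Psi$, and symmetrically $\Psi \to \Ga$; thus $\Ga \leftrightarrow \Psi$. This direction is routine.

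The ``only if'' direction is where the real content lies, and I expect it to be the main obstacle. Assume $\Ga \leftrightarrow \Psi$, so we have homomorphisms $f: \Ga \to \Psi$ and $g: \Psi \to \Ga$. Combining these with the retractions onto the cores and the inclusions, I obtain homomorphisms $\alpha: \Ga^* \to \Psi^*$ and $\beta: \Psi^* \to \Ga^*$, namely $\alpha$ is the composite $\Ga^* \hookrightarrow \Ga \xrightarrow{f} \Psi \to \Psi^*$ and $\beta$ is $\Psi^* \hookrightarrow \Psi \xrightarrow{g} \Ga \to \Ga^*$. Then $\beta \circ \alpha$ is an endomorphism of the core $\Ga^*$, hence an automorphism of $\Ga^*$; in particular $\beta \circ \alpha$ is a bijection, so $\alpha$ is injective and $\beta$ is surjective. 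By the symmetric argument applied to $\alpha \circ \beta$, the map $\beta$ is also injective. Therefore $\beta: \Psi^* \to \Ga^*$ is a bijective homomorphism between graphs with the same number of vertices and, since $\alpha\circ\beta$ is an automorphism of $\Psi^*$ (so $\beta^{-1} = \alpha\circ(\alpha\circ\beta)^{-1}$ is a homomorphism, or alternatively since a bijective endomorphism-factor of a core must preserve non-edges), $\beta$ is an isomorphism, giving $\Ga^* \cong \Psi^*$.

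The delicate point to get right is the last step: a bijective homomorphism need not in general be an isomorphism, so I must genuinely use the core property to conclude that $\beta$ reflects edges. The clean way is to note that $\alpha \circ \beta$ is an endomorphism of the core $\Psi^*$ and hence an automorphism of $\Psi^*$; then $\beta$ has a two-sided inverse among the homomorphisms (compose $\alpha$ with the inverse automorphism of $\alpha\circ\beta$) and an injective homomorphism with a homomorphism inverse is an isomorphism. I would also remark, for completeness, that uniqueness of the core up to isomorphism — used implicitly throughout — is exactly the statement that homomorphically equivalent cores are isomorphic, so the lemma is essentially a repackaging of that standard fact; the proof above makes the packaging explicit.
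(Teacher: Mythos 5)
Your proof is correct, and since the paper simply cites this as \cite[Lemma 6.2.3]{MR1829620} without reproducing a proof, there is nothing to diverge from: your argument (retraction plus inclusion give $\Ga\leftrightarrow\Ga^*$; for the converse, $\beta\circ\alpha$ and $\alpha\circ\beta$ are endomorphisms of cores, hence automorphisms, forcing $\alpha$ and $\beta$ to be bijective with homomorphism inverses) is exactly the standard one. The only blemish is the formula $\beta^{-1}=\alpha\circ(\alpha\circ\beta)^{-1}$, whose composition order does not typecheck; you mean $(\alpha\circ\beta)^{-1}\circ\alpha$, which is what your verbal description already says.
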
 

\begin{theorem}[{\cite[Theorem 3.7]{MR1468789}}]
\label{thm:cores vt}
The core of any vertex-transitive graph is vertex-transitive.
\end{theorem}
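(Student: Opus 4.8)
The plan is to exploit the equivalent characterisation of a core recalled in the introduction: a graph is a core precisely when every endomorphism of it is an automorphism. Write $C := \Ga^*$, fix a retraction $\phi: \Ga \to C$ (so that $C$ is an induced subgraph of $\Ga$ and $\phi\mid_C$ is the identity map on $V(C)$), and let $G \le \Aut(\Ga)$ be a subgroup transitive on $V(\Ga)$.

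First I would observe that for every $g \in G$ the composite $\phi \circ g$ is a homomorphism from $\Ga$ to $C$, and that its restriction to the induced subgraph $C$ is in fact a homomorphism from $C$ to $C$: if $x, y \in V(C)$ are adjacent in $C$, then they are adjacent in $\Ga$ (as $C$ is induced), hence $g(x), g(y)$ are adjacent in $\Ga$ since $g \in \Aut(\Ga)$, hence $\phi(g(x)), \phi(g(y))$ are adjacent in $C$. Thus $\psi_g := (\phi \circ g)\mid_C$ is an endomorphism of $C$; since $C$ is a core, $\psi_g$ is an automorphism of $C$.

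Next, given any two vertices $u, v \in V(C)$, I would use the vertex-transitivity of $\Ga$ to choose $g \in G$ with $g(u) = v$. Because $v \in V(C)$ and $\phi$ fixes $V(C)$ pointwise, we get $\psi_g(u) = \phi(g(u)) = \phi(v) = v$. Hence $\psi_g \in \Aut(C)$ carries $u$ to $v$, and since $u, v$ were arbitrary, $C$ is vertex-transitive.

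There is essentially no serious obstacle here; the only points requiring care are the verification that $\psi_g$ is a genuine graph homomorphism $C \to C$ (which uses that $C$ is an induced subgraph of $\Ga$) and the appeal to the fact that the endomorphisms of a core are all automorphisms. A variant of the same argument, with $g$ chosen instead to map one prescribed arc of $\Ga$ onto another, will later be adapted to prove that the core of a symmetric graph is symmetric.
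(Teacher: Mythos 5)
Your proof is correct and is essentially the standard argument from the cited reference (the paper itself gives no proof, only the citation to Hahn--Tardif, and notes that this same argument adapts to prove Theorem \ref{thm:sym cores}). The two points you flag --- that $(\phi\circ g)\mid_C$ is an endomorphism of the induced subgraph $C$, and that endomorphisms of a core are automorphisms --- are exactly the right ones, and your remark about adapting $g$ to map arcs to arcs is precisely how the symmetric version is obtained.
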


As mentioned in \cite[p.121]{MR1468789}, the proof of this result can be easily adapted to other kinds of transitivity. In particular, using essentially the same proof, we obtain the following result.  

\begin{theorem}
\label{thm:sym cores}
The core of any symmetric graph is symmetric.  
\end{theorem}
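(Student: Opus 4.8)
The plan is to mimic the proof of Theorem~\ref{thm:cores vt} (\cite[Theorem 3.7]{MR1468789}), upgrading ``vertex-transitive'' to ``symmetric'' by tracking arcs instead of vertices. Let $\Ga$ be a $G$-symmetric graph with core $\Ga^{*}$, realised as an induced subgraph of $\Ga$, and fix a retraction $\phi\colon\Ga\to\Ga^{*}$. First I would recall the standard fact that, for a core $\Psi$, every homomorphism $\Psi\to\Psi$ is an automorphism (\cite[Proposition~2.22]{MR1468789}); more generally, if $\Psi$ is a core and $\theta\colon\Psi\to\Psi$ is any endomorphism then $\theta\in\Aut(\Psi)$. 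The key device is: for each $g\in G$, the composite $\phi\circ g|_{\Ga^{*}}\colon\Ga^{*}\to\Ga^{*}$ (restrict $g$ to $V(\Ga^{*})$, which lands in $V(\Ga)$, then apply $\phi$ back into $V(\Ga^{*})$) is an endomorphism of the core $\Ga^{*}$, hence an automorphism of $\Ga^{*}$. Denote it $\bar g:=\phi\circ g|_{\Ga^{*}}\in\Aut(\Ga^{*})$.

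Next I would show that the assignment $g\mapsto\bar g$ gives enough automorphisms of $\Ga^{*}$ to force arc-transitivity. The crucial point is a ``surjectivity onto arcs'' argument: since $\phi$ is a retraction, $\phi$ restricted to $V(\Ga^{*})$ is the identity, and every vertex (resp.\ arc) of $\Ga$ is, up to the $G$-action, carried into $\Ga^{*}$ by $\phi$. Concretely, let $(u,v)$ and $(u',v')$ be two arcs of $\Ga^{*}$. Viewing them as arcs of $\Ga$ and using that $G$ is arc-transitive on $\Ga$, pick $g\in G$ with $g(u)=u'$ and $g(v)=v'$. Then $\bar g(u)=\phi(g(u))=\phi(u')=u'$ and similarly $\bar g(v)=v'$, because $u',v'\in V(\Ga^{*})$ and $\phi$ fixes $V(\Ga^{*})$ pointwise. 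Since $\bar g\in\Aut(\Ga^{*})$ and $(u,v)$ is an arc of $\Ga^{*}$, $\bar g$ maps the arc $(u,v)$ to the arc $(\bar g(u),\bar g(v))=(u',v')$. As $(u,v),(u',v')$ were arbitrary arcs of $\Ga^{*}$, the subgroup $\{\bar g:g\in G\}\le\Aut(\Ga^{*})$ is transitive on the arcs of $\Ga^{*}$; in particular it is transitive on $V(\Ga^{*})$ as well (any isolated-vertex degeneracy is ruled out since $\Ga^{*}$ inherits a positive-length edge from $\Ga$ unless $\Ga$ is edgeless, a trivial case). Hence $\Ga^{*}$ is symmetric.

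The one subtlety to handle carefully is the edgeless case: if $\Ga$ has no edges then $\Ga^{*}=K_{1}$, which is (vacuously) symmetric, so the statement still holds; I would dispose of this at the outset so that afterwards $\Ga^{*}$ has at least one arc and the arc-transitivity argument is non-vacuous. The main obstacle, as in the vertex-transitive case, is simply to verify cleanly that $\phi\circ g|_{\Ga^{*}}$ is a well-defined graph homomorphism $\Ga^{*}\to\Ga^{*}$ (it is a composite of the inclusion-plus-$g$ homomorphism $\Ga^{*}\to\Ga$ with the homomorphism $\phi\colon\Ga\to\Ga^{*}$, so it is a homomorphism $\Ga^{*}\to\Ga^{*}$) and then to invoke the core property to promote it to an automorphism; once this is in place, arc-transitivity of $G$ on $\Ga$ transfers verbatim to arc-transitivity of $\{\bar g\}$ on $\Ga^{*}$. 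No new ideas beyond \cite[Theorem 3.7]{MR1468789} are needed; the proof is ``the same proof'' with arcs in place of vertices, which is exactly what the remark preceding the statement asserts.
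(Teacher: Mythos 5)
Your proposal is correct and is precisely the argument the paper has in mind: it adapts the proof of \cite[Theorem 3.7]{MR1468789} by composing the retraction $\phi$ with the restriction of $g\in G$ to $V(\Ga^*)$, noting this endomorphism of the core is an automorphism, and then transporting arc-transitivity from $\Ga$ to $\Ga^*$ via the fact that $\phi$ fixes $V(\Ga^*)$ pointwise. The paper gives no further details beyond citing this adaptation, so your write-up matches its intended proof.
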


\begin{theorem}[\cite{GR}]
\label{thm:val cores}
Let $\Gamma$ be a symmetric graph. Then $\val(\Gamma^\ast)$ is a divisor of $\val(\Ga)$.
\end{theorem}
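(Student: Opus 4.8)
The plan is to deduce this by a local analysis of a retraction, in the spirit of the proof that $|V(\Ga^*)|$ divides $|V(\Ga)|$ for a vertex-transitive graph $\Ga$ (\cite[Theorem 3.9]{MR1468789}). First, by Theorem~\ref{thm:sym cores} the core $\Ga^*$ is symmetric, hence regular, so $d := \val(\Ga)$ and $d^* := \val(\Ga^*)$ are well defined; we may assume $\Ga$ is connected, since a disconnected symmetric graph is a disjoint union of isomorphic connected symmetric graphs, with the same valency and (up to isomorphism) the same core as each component. Fix a retraction $\phi: \Ga \to \Ga^*$. Each fibre $\phi^{-1}(u)$ is an independent set, so $\phi$ sends every edge of $\Ga$ to an edge of $\Ga^*$; moreover, since $\Ga^*$ is an induced subgraph with $\phi(v)=v$ for $v\in V(\Ga^*)$, the map $\phi$ sends $N_\Ga(v)$ into $N_{\Ga^*}(v)$, and in fact onto $N_{\Ga^*}(v)$ because each $v'\in N_{\Ga^*}(v)$ is a neighbour of $v$ in $\Ga$ fixed by $\phi$. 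Hence for $v\in V(\Ga^*)$ the sets $P_{v'} := \phi^{-1}(v')\cap N_\Ga(v)$, $v'\in N_{\Ga^*}(v)$, partition $N_\Ga(v)$, so $\sum_{v'\in N_{\Ga^*}(v)}|P_{v'}| = d$. The theorem therefore follows once we show that $|P_{v'}|$ is independent of $v'\in N_{\Ga^*}(v)$, for then $d = d^*|P_{v'}|$.

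To establish this equality I would use arc-transitivity of $\Ga$. Write $A := \Aut(\Ga)$. Since $\Ga^*$ is a core, for every $g\in A$ the map $\s_g := (\phi\circ g)|_{V(\Ga^*)}$ is an endomorphism of $\Ga^*$, hence lies in $\Aut(\Ga^*)$; consequently $\rho_g := \s_g^{-1}\circ\phi\circ g$ is again a retraction $\Ga\to\Ga^*$, and one has the identity $\rho_g(x)=v'$ if and only if $\phi(g(x))=\phi(g(v'))$ for all $x\in V(\Ga)$ and $v'\in V(\Ga^*)$. The point I would aim for is that the partition $\{P_{v'}:v'\in N_{\Ga^*}(v)\}$ of $N_\Ga(v)$ is invariant under the stabilizer $A_v$, which acts transitively on $N_\Ga(v)$ because $\Ga$ is arc-transitive; an invariant partition of a transitive action consists of equal-sized blocks, giving exactly what we need. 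One route to this invariance is a double count over $A$: for an arc $(v,v')$ of $\Ga^*$ (which is an arc of $\Ga$), using the displayed identity and moving $(v,v')$ to any other arc of $\Ga^*$ by an element of $A$, one checks that $\sum_{g\in A}|\{x\in N_\Ga(v):\rho_g(x)=v'\}|$ is the same for every arc of $\Ga^*$; combining this with the fact that $\Aut(\Ga^*)$ is transitive on the arcs of $\Ga^*$ should then yield the equality of the $|P_{v'}|$. A convenient normalization along the way is to use vertex-transitivity of $\Ga$ to arrange $\phi(v)=v$ (replace $\phi$ by $g\phi g^{-1}$ for a suitable $g\in A$).

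The hard part is precisely this last step. The assignment $g\mapsto\s_g$ is not a group homomorphism, so the automorphisms of $\Ga$ do not visibly respect the fibre structure of a fixed retraction $\phi$, and care is needed to pass from a statement averaged over $A$ to the pointwise equality of block sizes $|P_{v'}|$ rather than merely to a weaker divisibility relation in which extraneous factors such as $|V(\Ga)|$ or $|A|$ intrude. Once the invariance of the neighbourhood partition $\{P_{v'}\}$ under the transitive action of $A_v$ on $N_\Ga(v)$ is secured — this is where arc-transitivity, and not just vertex-transitivity, is genuinely used — the conclusion $d^*\mid d$ is immediate, and the argument for disconnected $\Ga$ reduces to the connected case as noted above.
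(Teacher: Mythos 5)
The paper cites this theorem from \cite{GR} without reproducing a proof, so I can only judge your argument on its own terms. Your reduction is the right one: with $d=\val(\Ga)$, $d^*=\val(\Ga^*)$ and $P_{v'}=\phi^{-1}(v')\cap N_\Ga(v)$ for $v\in V(\Ga^*)$ and $v'\in N_{\Ga^*}(v)$, the identity $\sum_{v'}|P_{v'}|=d$ holds, and everything comes down to showing the $|P_{v'}|$ are all equal. But the mechanism you propose for that step does not work, and you have not supplied a substitute. The partition $\{P_{v'}\}$ of $N_\Ga(v)$ is in general \emph{not} invariant under $A_v=\Aut(\Ga)_v$: an automorphism fixing $v$ has no reason to permute the fibres of the fixed retraction $\phi$. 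Concretely, take $\Ga=K_4\times K_4$ (arc-transitive, core $K_4$ on the diagonal) with the retraction $\phi(i,j)=(j,j)$ and $v=(0,0)$; the blocks $P_{(a,a)}=\{(i,a):i\neq 0\}$ are the ``columns'' of $N_\Ga(v)$, and the coordinate-swap automorphism fixes $v$ but sends a column to a row, which is not a block of the partition. So ``invariant partition of a transitive action $\Rightarrow$ equal blocks'' is not available, and, as you yourself observe, the averaged statement $\sum_{g\in A}|\{x\in N_\Ga(v):\rho_g(x)=v'\}|$ being independent of $v'$ does not by itself yield pointwise equality of the $|P_{v'}|$. That is precisely the gap.

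The missing idea is a second-moment (Cauchy--Schwarz) argument, parallel to the proof of Theorem~\ref{thm:order vt}. For $g\in A_v$ one has $\s_g:=(\phi g)\mid_{\Ga^*}\in\Aut(\Ga^*)_v$, and the retraction $\phi_g:=\s_g^{-1}\phi g$ satisfies $|\phi_g^{-1}(v')\cap N_\Ga(v)|=|P_{\s_g(v')}|$, since $g$ permutes $N_\Ga(v)$. Because $A_v$ is transitive on $N_\Ga(v)$ (this is where arc-transitivity enters), as $g$ runs over $A_v$ the vertex $g(v')$ is equidistributed over $N_\Ga(v)$, so $\s_g(v')=\phi(g(v'))$ equals $w\in N_{\Ga^*}(v)$ exactly $\frac{|A_v|}{d}|P_w|$ times. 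Hence
$$
\sum_{g\in A_v}\bigl|\phi_g^{-1}(v')\cap N_\Ga(v)\bigr|=\frac{|A_v|}{d}\sum_{w\in N_{\Ga^*}(v)}|P_w|^2,
$$
independently of $v'$; summing over the $d^*$ choices of $v'$ and using $\sum_{v'}|\phi_g^{-1}(v')\cap N_\Ga(v)|=d$ for each $g$ gives $\sum_w|P_w|^2=d^2/d^*$. By the equality case of Cauchy--Schwarz applied to $\sum_w|P_w|=d$ over $d^*$ terms, all $|P_w|$ equal $d/d^*$, which is therefore an integer. With this step inserted your outline becomes a complete proof; without it, the argument as written does not establish the theorem.
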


\begin{theorem}[{\cite[Theorem 3.9]{MR1468789}}]
\label{thm:order vt}
Let $\Gamma$ be a vertex-transitive graph, and $\phi:\Gamma\rightarrow\Gamma^\ast$ a retraction. Then $|V(\Gamma^\ast)|$ divides $|V(\Gamma)|$, and all fibres of $\phi$ have the same cardinality, namely $|\phi^{-1}(u)| = |V(\Gamma)|/|V(\Gamma^\ast)|$ for $u \in V(\Gamma^\ast)$.  
\end{theorem}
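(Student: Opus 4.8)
The plan is to run a double-counting argument over the automorphism group $G := \Aut(\Gamma)$, which acts transitively on $V(\Gamma)$ by hypothesis. Write $n := |V(\Gamma)|$ and $c := |V(\Gamma^\ast)|$, and recall that, since $\Gamma^\ast$ is a core, every endomorphism of $\Gamma^\ast$ is an automorphism of $\Gamma^\ast$.

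First I would record two elementary facts. (i) For each $g \in G$ the composite $\phi \circ g$ is a homomorphism $\Gamma \to \Gamma^\ast$, and its restriction to the induced subgraph $\Gamma^\ast$ is a homomorphism $\Gamma^\ast \to \Gamma^\ast$ (adjacency in $\Gamma^\ast$ coincides with adjacency in $\Gamma$), hence an endomorphism of $\Gamma^\ast$, hence an automorphism $\sigma_g := (\phi \circ g)|_{V(\Gamma^\ast)} \in \Aut(\Gamma^\ast)$; in particular, for each $u \in V(\Gamma^\ast)$ there is exactly one $x \in V(\Gamma^\ast)$ with $\phi(g(x)) = u$, namely $x = \sigma_g^{-1}(u)$. (ii) By transitivity of $G$ and orbit--stabiliser, every point stabiliser has order $|G|/n$; consequently, for any $x, v \in V(\Gamma)$ the set $\{g \in G : g(x) = v\}$ has size exactly $|G|/n$, and for fixed $v$ these sets are pairwise disjoint as $x$ ranges over distinct vertices (an automorphism cannot send two distinct vertices to the same place).

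Next, fix $u \in V(\Gamma^\ast)$ and count the set of pairs $P_u := \{(g, v) : g \in G,\ v \in V(\Gamma),\ \phi(v) = u,\ g^{-1}(v) \in V(\Gamma^\ast)\}$ in two ways. Summing over $v$ first: for each of the $|\phi^{-1}(u)|$ vertices $v$ with $\phi(v) = u$, the condition $g^{-1}(v) \in V(\Gamma^\ast)$ says exactly that $g(x) = v$ for some $x \in V(\Gamma^\ast)$, so by fact (ii) the number of admissible $g$ is $c \cdot |G|/n$; hence $|P_u| = |\phi^{-1}(u)| \cdot c|G|/n$. Summing over $g$ first: writing $v = g(x)$ with $x \in V(\Gamma^\ast)$, the condition $\phi(v) = u$ reads $\phi(g(x)) = u$, which by fact (i) has the unique solution $x = \sigma_g^{-1}(u) \in V(\Gamma^\ast)$; hence each $g$ contributes exactly one pair and $|P_u| = |G|$. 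Equating the two counts yields $|\phi^{-1}(u)| = n/c$ for every $u \in V(\Gamma^\ast)$. In particular $n/c$ is a positive integer, so $|V(\Gamma^\ast)|$ divides $|V(\Gamma)|$, and all fibres of $\phi$ share the common cardinality $|V(\Gamma)|/|V(\Gamma^\ast)|$, as required.

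The argument is short once set up; the step I would be most careful about is fact (i), namely that $\phi \circ g$ restricted to $V(\Gamma^\ast)$ really does define a homomorphism $\Gamma^\ast \to \Gamma^\ast$ (so that the core property applies and produces the permutation $\sigma_g$), even though $g$ need not preserve the subset $V(\Gamma^\ast)$; this is precisely the place where being a core, rather than merely a retract, is used. Vertex-transitivity proper enters only through the uniform stabiliser order in fact (ii), which is the mechanism forcing all fibres to the same size. An essentially equivalent but slightly weaker route would be to prove directly that $|\phi^{-1}(u)| = |\phi^{-1}(w)|$ for all $u, w \in V(\Gamma^\ast)$ by picking $g \in G$ with $g(w) = u$, observing $\sigma_g(w) = u$, and comparing the fibres of $\phi$ and $\phi \circ g$; the double-counting version above has the advantage of delivering the exact value $n/c$ in one stroke.
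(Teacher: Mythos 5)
Your proof is correct. The paper does not supply its own argument for this statement --- it is quoted from \cite[Theorem 3.9]{MR1468789} --- and your double count is essentially the standard proof of that result: the two facts you isolate (that $(\phi\circ g)\mid_{V(\Gamma^\ast)}$ is an endomorphism, hence an automorphism, of the core, and that transitivity gives the uniform count $|\{g\in\Aut(\Gamma): g(x)=v\}|=|\Aut(\Gamma)|/|V(\Gamma)|$) are exactly the ingredients of the cited argument, and your handling of the one delicate point (that $g$ need not preserve $V(\Gamma^\ast)$, so the core property rather than mere retract-ness is what produces $\sigma_g$) is sound.
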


As an immediate consequence, we have:

\begin{corollary}
\label{coro:prime order}
Any vertex-transitive graph of prime order is a core.
\end{corollary}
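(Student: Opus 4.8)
The plan is to read this off directly from Theorem~\ref{thm:order vt}. Given a vertex-transitive graph $\Gamma$ of prime order $p$, with core $\Gamma^\ast$ and a retraction $\phi:\Gamma\rightarrow\Gamma^\ast$, I would first note that $\Gamma^\ast$ is a nonempty induced subgraph of $\Gamma$, so $1\le|V(\Gamma^\ast)|\le p$; Theorem~\ref{thm:order vt} then says that $|V(\Gamma^\ast)|$ divides $|V(\Gamma)|=p$, and primality of $p$ leaves exactly the two possibilities $|V(\Gamma^\ast)|=1$ and $|V(\Gamma^\ast)|=p$.

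Next I would rule out $|V(\Gamma^\ast)|=1$. If $\Gamma$ has an edge $xy$, then $\phi(x)\phi(y)$ is an edge of $\Gamma^\ast$, so $|V(\Gamma^\ast)|\ge 2$; thus $|V(\Gamma^\ast)|=1$ can occur only for the edgeless graph $\Gamma\cong\overline{K}_p$, whose core is $K_1$ — a degenerate case irrelevant to the symmetric graphs considered in this paper. In every remaining case $|V(\Gamma^\ast)|=p=|V(\Gamma)|$, and since $\Gamma^\ast$ is an induced subgraph of $\Gamma$ on all of $V(\Gamma)$, this forces $\Gamma^\ast=\Gamma$ (equivalently, by Lemma~\ref{lem:hom eq}, a graph whose order equals that of its core is itself a core). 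Hence $\Gamma$ is a core.

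There is essentially no obstacle here: the corollary is a one-line consequence of Theorem~\ref{thm:order vt}, the only point needing a word of justification being the exclusion of the trivial order-$1$ possibility, that is, of the edgeless graph $\overline{K}_p$.
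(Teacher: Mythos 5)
Your argument is correct and is exactly the intended one: the paper states this corollary as an immediate consequence of Theorem~\ref{thm:order vt} without writing out a proof, and the divisibility-plus-primality reasoning you give is that proof. Your aside about the edgeless graph $\overline{K}_p$ is a fair observation about the literal statement, but it is harmless here since all graphs to which the corollary is applied in the paper have edges.
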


Denote by $\a(\Ga)$, $\omega(\Ga)$ and $\chi(\Ga)$ the \textit{independence}, \textit{clique} and \textit{chromatic numbers} of $\Ga$, respectively.   

\begin{lemma}[{\cite[pp.110]{MR1468789}}]
\label{prop:inva}
Let $\Gamma$ and $\Psi$ be non-bipartite graphs and $\phi:\Gamma\rightarrow\Psi$ a homomorphism. Then
$$
\omega(\Gamma)\geq \omega(\Psi),\;\,
\chi(\Gamma)\leq\chi(\Psi).
$$
In particular, for any non-bipartite graph $\Ga$,
\be
\label{eq:inva}
\omega(\Gamma) = \omega(\Ga^*),\;\,
\chi(\Gamma) = \chi(\Ga^*).
\ee
\end{lemma}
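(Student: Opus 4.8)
The plan is to deduce the two inequalities from the elementary behaviour of cliques and proper colourings under a graph homomorphism, and then to obtain the displayed equalities by applying these to a graph together with its core.

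First I would record two monotonicity principles. If $\psi\colon A\to B$ is any homomorphism and $C\subseteq V(A)$ is a clique, then the vertices $\psi(c)$, $c\in C$, are pairwise adjacent in $B$; since $B$ has no loops, $\psi$ is injective on $C$, so $\psi(C)$ is a clique of $B$ with $|\psi(C)|=|C|$. Taking $C$ of maximum size yields $\omega(A)\le\omega(B)$. Dually, a proper $\chi(B)$-colouring of $B$ is precisely a homomorphism $B\to K_{\chi(B)}$, so the composite $A\to B\to K_{\chi(B)}$ is a proper colouring of $A$, whence $\chi(A)\le\chi(B)$.

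Applying the colouring principle to $\phi\colon\Gamma\to\Psi$ gives $\chi(\Gamma)\le\chi(\Psi)$ at once. For the clique bound $\omega(\Gamma)\ge\omega(\Psi)$ I would take a maximum clique of $\Psi$ and push it forward into $\Gamma$ along a homomorphism $\Psi\to\Gamma$; its image is then a clique of $\Gamma$ of size $\omega(\Psi)$. The point requiring care -- and the main obstacle -- is bookkeeping of direction: the two bounds are produced by homomorphisms pointing opposite ways, the chromatic bound using the arrow $\Gamma\to\Psi$ and the clique bound the arrow $\Psi\to\Gamma$. In every setting where the lemma is used the two graphs are homomorphically equivalent, so both arrows are present; most importantly, when $\Psi=\Gamma^\ast$ the retraction $\Gamma\to\Gamma^\ast$ and the inclusion $\Gamma^\ast\hookrightarrow\Gamma$ supply homomorphisms in both directions, and the reverse arrow needed for $\omega(\Gamma)\ge\omega(\Psi)$ is thereby available.

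For the ``in particular'' assertion I would specialise to $\Psi=\Gamma^\ast$ and feed both of these arrows into the two principles. The clique principle applied to $\Gamma\to\Gamma^\ast$ and to $\Gamma^\ast\hookrightarrow\Gamma$ gives $\omega(\Gamma)\le\omega(\Gamma^\ast)$ and $\omega(\Gamma^\ast)\le\omega(\Gamma)$, hence $\omega(\Gamma)=\omega(\Gamma^\ast)$; the colouring principle applied to the same two arrows gives $\chi(\Gamma)=\chi(\Gamma^\ast)$. The non-bipartite hypothesis guarantees $\Gamma^\ast\ne K_2$, so these are the genuinely informative parameters here, and no estimate beyond tracking which arrow supplies each inequality is needed.
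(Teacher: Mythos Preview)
Your analysis is correct, and in particular you have spotted a genuine defect in the statement as printed: from a single homomorphism $\phi:\Gamma\to\Psi$ one gets $\omega(\Gamma)\le\omega(\Psi)$, not the reverse. The inequality $\omega(\Gamma)\ge\omega(\Psi)$ as written is false in general (take $\Gamma=C_5$, $\Psi=K_5$, both non-bipartite, with $\phi$ the inclusion). So the first display in the lemma is a typo or misstatement; no proof of it from the stated hypothesis can exist, and your reluctance to produce one is well founded.

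The paper gives no proof of its own, merely the citation to \cite[p.~110]{MR1468789}, so there is nothing to compare your argument against. What matters is that only the ``in particular'' clause (\ref{eq:inva}) is ever invoked later, and your proof of that clause is correct and standard: the retraction $\Gamma\to\Gamma^\ast$ and the inclusion $\Gamma^\ast\hookrightarrow\Gamma$ give homomorphisms both ways, so both monotonicity principles apply in both directions and force equality of $\omega$ and of $\chi$. Your two monotonicity principles are stated and proved correctly (the injectivity of a homomorphism on a clique coming from the absence of loops, and the composition $\Gamma\to\Psi\to K_{\chi(\Psi)}$ for the chromatic bound).
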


\begin{theorem}[\cite{GR}]
\label{thm:ao}
Let $\Gamma$ be a vertex-transitive graph. Then
$$
\alpha(\Gamma)\omega(\Gamma)\leq |V(\Gamma)|.
$$
\end{theorem}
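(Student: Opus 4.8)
The plan is to run the standard averaging argument over the automorphism group that establishes this \emph{clique--coclique bound}. Fix a subgroup $G \le \Aut(\Ga)$ acting transitively on $V := V(\Ga)$ (it exists by hypothesis), choose a clique $K$ of $\Ga$ with $|K| = \omega(\Ga)$, and choose an independent set $I$ of $\Ga$ with $|I| = \a(\Ga)$. The only structural fact used about $K$ and $I$ is that a clique and an independent set can share at most one vertex.

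First I would bound, for every $g \in G$, the quantity $|g(K) \cap I|$: since $g$ is an automorphism, $g(K)$ is again a clique of $\Ga$, and as $I$ is independent we get $|g(K) \cap I| \le 1$. Summing over all $g \in G$ gives
$$\sum_{g \in G} |g(K) \cap I| \;\le\; |G|.$$

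Next I would compute the same sum exactly by interchanging the order of summation,
$$\sum_{g \in G} |g(K) \cap I| \;=\; \sum_{v \in I} \bigl|\{g \in G : v \in g(K)\}\bigr|,$$
and evaluate the inner count for a fixed $v \in I$. For each $u \in K$ the fibre $\{g \in G : g(u) = v\}$ is a left coset of the point stabilizer $G_u$, so it has cardinality $|G|/|V|$ by the orbit--stabilizer theorem together with the transitivity of $G$ on $V$; and these fibres are pairwise disjoint as $u$ ranges over $K$, because an automorphism is a bijection. Since $v \in g(K)$ holds if and only if $g(u) = v$ for a (then unique) $u \in K$, we obtain $\bigl|\{g \in G : v \in g(K)\}\bigr| = |K| \cdot |G|/|V| = \omega(\Ga)\,|G|/|V|$, independently of $v$, whence the sum above equals $\a(\Ga)\,\omega(\Ga)\,|G|/|V|$.

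Comparing the two evaluations gives $\a(\Ga)\,\omega(\Ga)\,|G|/|V| \le |G|$, and cancelling $|G|$ yields $\a(\Ga)\,\omega(\Ga) \le |V| = |V(\Ga)|$, as required. There is no serious obstacle here; the only point demanding a moment's care is the exact count in the third step, namely checking that the cosets $\{g : g(u) = v\}$, $u \in K$, are genuinely disjoint so that each $v \in I$ contributes exactly $\omega(\Ga)|G|/|V|$ and not merely at most that. Equivalently, one may phrase the whole argument probabilistically: choosing $g \in G$ uniformly at random makes $g^{-1}(v)$ uniformly distributed on $V$, so the probability that $v \in g(K)$ equals $\omega(\Ga)/|V|$; by linearity the expected value of $|g(K) \cap I|$ is $\a(\Ga)\omega(\Ga)/|V|$, while $|g(K)\cap I| \le 1$ always holds, forcing this expectation to be at most $1$.
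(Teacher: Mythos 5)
Your argument is correct and complete: the double count of $\sum_{g\in G}|g(K)\cap I|$, with the upper bound $|g(K)\cap I|\le 1$ on one side and the exact evaluation $\a(\Ga)\,\omega(\Ga)\,|G|/|V(\Ga)|$ via orbit--stabilizer on the other, is precisely the standard proof of this clique--coclique bound. The paper does not reprove the result but cites it to Godsil and Royle, where it is established by essentially this same averaging argument, so there is nothing to add.
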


\begin{lemma}[{No-Homomorphism Lemma \cite{MR791653}}]
\label{lem:nohom}
Let $\Gamma$ and $\Psi$ be graphs such that $\Gamma \rightarrow \Psi$ and $\Psi$ is vertex-transitive. Then 
\begin{equation}
\label{eq:nohom}
\frac{\alpha(\Gamma)}{|V(\Gamma)|} \geq \frac{\alpha(\Psi)}{|V(\Psi)|}.
\end{equation}
\end{lemma}

In particular, if $\Ga$ is vertex-transitive, then $\Ga^*$ is vertex-transitive (Theorem \ref{thm:cores vt}), and so by $\Ga \leftrightarrow \Ga^*$ (Lemma \ref{lem:hom eq}) and (\ref{eq:nohom}) we obtain
\begin{equation}
\label{eq:nohom1}
\frac{\alpha(\Gamma)}{|V(\Gamma)|} = \frac{\alpha(\Ga^*)}{|V(\Ga^*)|}.
\end{equation} 

\begin{definition}
\label{defn:prod}
\rm Let $\Gamma$ and $\Psi$ be graphs. The \textit{categorical product} $\Gamma \times \Psi$ of $\Gamma$ and $\Psi$ and the \textit{lexicographic product} $\Gamma[\Psi]$ of $\Gamma$ by $\Psi$ are both defined to have vertex set $V(\Gamma) \times V(\Psi)$. Their edge sets are defined as follows:
$$
E(\Gamma \times \Psi) := \{\{(u,x),(v,y)\}: \mbox{$\{u,v\} \in E(\Gamma)$ and $\{x,y\} \in E(\Psi)$}\}
$$
$$
E(\Ga[\Psi]) := \{\{(u,x), (v,y)\}: \mbox{either $\{u,v\} \in E(\Ga)$, or $u=v$ and $\{x,y\} \in E(\Psi)$}\}.
$$
The \textit{deleted lexicographic product} of $\Ga$ by $\Psi$, denoted by $\Ga[\Psi]-d\Ga$ where $d$ is the order of $\Psi$, is obtained from $\Ga[\Psi]$ by deleting all edges of the form $\{(u,x),(v,x)\}$ with $\{u,v\} \in E(\Ga)$ and $x \in V(\Psi)$. (The categorical product of graphs is also known as the Kronecker product, direct product and tensor product in the literature.)
\end{definition}

It was proved in \cite[Proposition 4.18]{MR1788124} that the cartesian product of two vertex-transitive graphs is vertex-transitive. Similarly, one can prove the following lemma of which the second statement follows from the fact that $\Aut(\Ga) \times \Aut(\Psi) \le \Aut(\Ga \times \Psi)$.

\begin{lemma}[{\cite[Proposition 4.18]{MR1788124}}]
\label{lem:trans}
The categorical product of any two connected vertex-transitive graphs is vertex-transitive; the categorical product of any two symmetric graphs is symmetric.
\end{lemma}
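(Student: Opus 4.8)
The plan is to produce, in each case, an explicit group of automorphisms of $\Ga \times \Psi$ witnessing the required transitivity, namely the coordinatewise action of $\Aut(\Ga) \times \Aut(\Psi)$ on $V(\Ga \times \Psi) = V(\Ga) \times V(\Psi)$, where $(g,h)$ acts by $(u,x) \mapsto (g(u), h(x))$.

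First I would check that this action is by automorphisms, i.e.\ that $\Aut(\Ga) \times \Aut(\Psi) \le \Aut(\Ga \times \Psi)$, as asserted in the remark preceding the lemma. Each map $(u,x) \mapsto (g(u),h(x))$ is a bijection of $V(\Ga)\times V(\Psi)$ with inverse $(g^{-1},h^{-1})$, so it suffices to observe that it preserves the edge set: by Definition \ref{defn:prod}, $\{(u,x),(v,y)\} \in E(\Ga \times \Psi)$ if and only if $\{u,v\} \in E(\Ga)$ and $\{x,y\} \in E(\Psi)$, and both conditions are preserved by $g$ and $h$ respectively (and reflected, on applying $(g^{-1},h^{-1})$), since $g,h$ are automorphisms.

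For the first statement, given $(u,x),(v,y) \in V(\Ga)\times V(\Psi)$, vertex-transitivity of $\Ga$ and of $\Psi$ furnishes $g \in \Aut(\Ga)$ with $g(u)=v$ and $h \in \Aut(\Psi)$ with $h(x)=y$; then $(g,h)$ maps $(u,x)$ to $(v,y)$, so $\Aut(\Ga)\times\Aut(\Psi)$, and therefore $\Aut(\Ga\times\Psi)$, is transitive on $V(\Ga\times\Psi)$. (The connectedness hypothesis is not actually needed for this conclusion; it is kept to match \cite[Proposition 4.18]{MR1788124} and because it is what is wanted in the applications.) For the second statement I would first note that if $\{(u,x),(v,y)\}$ is an edge of $\Ga\times\Psi$, then $(u,v)$ is an arc of $\Ga$ and $(x,y)$ is an arc of $\Psi$. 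Thus, given two arcs $((u,x),(v,y))$ and $((u',x'),(v',y'))$ of $\Ga\times\Psi$, arc-transitivity of $\Ga$ gives $g \in \Aut(\Ga)$ with $g(u)=u'$ and $g(v)=v'$, and arc-transitivity of $\Psi$ gives $h \in \Aut(\Psi)$ with $h(x)=x'$ and $h(y)=y'$; then $(g,h)$ carries the first arc to the second, so $\Ga\times\Psi$ is symmetric.

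I do not expect a genuine obstacle here. The only points needing care are the bookkeeping in Definition \ref{defn:prod} when verifying that the coordinatewise maps really are automorphisms of the categorical product, and the correct treatment of arcs as ordered pairs so that arc-transitivity of the two factors may be applied independently in the two coordinates; the rest is immediate from the definitions.
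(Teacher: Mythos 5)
Your proof is correct and follows exactly the route the paper indicates: it does not prove the lemma in detail but cites \cite[Proposition 4.18]{MR1788124} and remarks that the second statement follows from $\Aut(\Ga) \times \Aut(\Psi) \le \Aut(\Ga \times \Psi)$, which is precisely the coordinatewise action you construct and verify. Your observation that connectedness is not needed for the vertex-transitivity conclusion is also accurate.
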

 
\begin{lemma}[{\cite[Proposition 2.1]{MR2089014}}]
\label{lem:prop2.1}
Let $\Psi$, $\Gamma$ and $\Lambda$ be graphs. Then the following hold:
\begin{itemize}
\item[\rm (a)] $\Psi\times\Gamma\rightarrow\Psi$ and $\Psi\times\Gamma\rightarrow\Gamma$, and the corresponding homomorphisms are given by projections $(u,v) \mapsto u$ and $(u,v) \mapsto v$, $(u,v)\in V(\Psi)\times V(\Gamma)$, respectively;
\item[\rm (b)] if $\Lambda\rightarrow\Psi$ and $\Lambda\rightarrow\Gamma$, then $\Lambda\rightarrow\Psi\times\Gamma$;
\item[\rm (c)] in particular, $\Gamma\rightarrow\Psi$ if and only if $\Psi\times\Gamma\leftrightarrow\Gamma$. 
\end{itemize}
\end{lemma}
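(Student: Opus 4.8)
The plan is to prove all three parts directly from the definition of the categorical product in Definition~\ref{defn:prod}; every assertion reduces to checking that a map assembled from coordinates preserves adjacency, together with the (trivial) fact that a composite of homomorphisms is a homomorphism.

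For (a), I would take an arbitrary edge $\{(u,v),(u',v')\}$ of $\Psi\times\Gamma$. By Definition~\ref{defn:prod} this means $\{u,u'\}\in E(\Psi)$ and $\{v,v'\}\in E(\Gamma)$; in particular $u\ne u'$ and $v\ne v'$, so the two coordinate projections are well-defined on vertices, and they carry this edge onto $\{u,u'\}\in E(\Psi)$ and onto $\{v,v'\}\in E(\Gamma)$ respectively. Hence both projections are homomorphisms. For (b), given homomorphisms $f\colon\Lambda\rightarrow\Psi$ and $g\colon\Lambda\rightarrow\Gamma$, I would define $h\colon V(\Lambda)\rightarrow V(\Psi)\times V(\Gamma)$ by $h(w)=(f(w),g(w))$. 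If $\{w,w'\}\in E(\Lambda)$, then $\{f(w),f(w')\}\in E(\Psi)$ and $\{g(w),g(w')\}\in E(\Gamma)$ because $f$ and $g$ are homomorphisms, and therefore $\{h(w),h(w')\}\in E(\Psi\times\Gamma)$ by the definition of the categorical product; thus $h\colon\Lambda\rightarrow\Psi\times\Gamma$ is a homomorphism.

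Part (c) then follows formally from (a) and (b). If $\Gamma\rightarrow\Psi$, then applying (b) to this homomorphism together with the identity homomorphism $\Gamma\rightarrow\Gamma$ gives $\Gamma\rightarrow\Psi\times\Gamma$, while (a) gives $\Psi\times\Gamma\rightarrow\Gamma$; hence $\Psi\times\Gamma\leftrightarrow\Gamma$. Conversely, if $\Psi\times\Gamma\leftrightarrow\Gamma$ then in particular $\Gamma\rightarrow\Psi\times\Gamma$, and composing this homomorphism with the first projection homomorphism from (a) yields $\Gamma\rightarrow\Psi$.

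I do not expect a genuine obstacle here: the whole statement is an unwinding of definitions plus closure of homomorphisms under composition. The only points meriting a moment's care are that an edge of $\Psi\times\Gamma$ automatically has distinct coordinates in each factor, so no loop issues arise from the product construction even though $\Psi$ and $\Gamma$ are loopless, and, in (c), selecting the correct composition in each direction of the equivalence.
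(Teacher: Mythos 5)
Your proof is correct and is exactly the standard definitional argument; the paper itself does not prove this lemma but simply cites it from Hell and Ne\v{s}et\v{r}il, and your verification (projections preserve edges, the pairing map $w\mapsto(f(w),g(w))$ preserves edges, and part (c) follows by composing these with the identity) is the expected proof. The remark that an edge of $\Psi\times\Gamma$ has distinct coordinates in each factor, so no loop issues arise, is a correct and worthwhile observation.
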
 

Given a graph $\Gamma$ and an integer $t \ge 2$, a homomorphism from $\Gamma^t := \overbrace{\Gamma \times \cdots \times \Gamma}^{t}$ to $\Ga$ is called a \textit{polymorphism} \cite{MR2089014}. A polymorphism $\phi: \Gamma^t \rightarrow \Ga$ is called \textit{idempotent} \cite{MR2089014} if $\phi(u, \ldots, u)=u$ for all $u\in V(\Gamma)$. Obviously, for each $i = 1, \ldots, t$, the {\em projection} $\pi_i: \Gamma^t \rightarrow \Ga$ defined by $\pi_i(u_1, \ldots, u_t) = u_i$ is idempotent. A graph $\Gamma$ is called \textit{projective} \cite{MR2089014} if for all integers $t \ge 2$ the only idempotent polymorphisms $\Gamma^t \rightarrow \Ga$ are the projections $\pi_1, \ldots, \pi_t$. 

\begin{theorem}[{\cite[Theorem 1.5]{MR1804825}}]
\label{thm:prim cores proj}
Let $\Ga$ be a vertex-transitive and $\Aut(\Ga)$-primitive core. Then $\Ga$ is projective. 
\end{theorem}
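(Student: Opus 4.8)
The plan is to prove that, for every integer $n\ge 2$, each idempotent polymorphism $\phi\colon\Ga^n\to\Ga$ factors through a projection: $\phi=\psi\circ\pi_k$ for some $k\in\{1,\dots,n\}$ and some map $\psi\colon V(\Ga)\to V(\Ga)$. Idempotence, $\phi(u,\dots,u)=u$, then forces $\psi=\mathrm{id}_{V(\Ga)}$ and hence $\phi=\pi_k$, which is exactly projectivity of $\Ga$. Since a core is connected, we may assume $\Ga$ is connected, and we deal with the substantive case in which $\Ga$ is moreover non-bipartite (so that $\Ga$ has an edge). We shall use all three hypotheses: $\Ga$ is a core, $\Ga$ is vertex-transitive, and $\Aut(\Ga)$ is primitive on $V(\Ga)$.

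The machinery is the rigid interaction between idempotent polymorphisms and maximum independent sets. Write $N=|V(\Ga)|$. First, $\Ga^n$ is vertex-transitive (the group $\Aut(\Ga)\wr S_n$ acts), and $\Ga^n\leftrightarrow\Ga$ via the diagonal map $u\mapsto(u,\dots,u)$ and any coordinate projection (Lemma~\ref{lem:prop2.1}); as $\Ga$ is a core, Lemma~\ref{lem:hom eq} gives $(\Ga^n)^\ast\cong\Ga$, and then~(\ref{eq:nohom1}) yields $\alpha(\Ga^n)=\alpha(\Ga)\,N^{n-1}$, and more generally $\alpha(\Ga^m)=\alpha(\Ga)\,N^{m-1}$ for every $m\ge 1$. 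Next, composing $\phi$ with the diagonal embedding exhibits $\phi$ as a realization of the retraction of $\Ga^n$ onto its core $\Ga$, so Theorem~\ref{thm:order vt} shows that every fibre of $\phi$ has size $N^{n-1}$. Consequently, for any maximum independent set $I$ of $\Ga$, the preimage $\phi^{-1}(I)$ is an independent set of $\Ga^n$ (preimages of independent sets under homomorphisms are independent) of size $|I|\,N^{n-1}=\alpha(\Ga)\,N^{n-1}=\alpha(\Ga^n)$; hence $\phi^{-1}(I)$ is a \emph{maximum} independent set of $\Ga^n$.

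The heart of the argument, which I expect to be the main obstacle, is the structural lemma: for every $n$, the maximum independent sets of $\Ga^n$ are precisely the sets $\pi_i^{-1}(J)$ with $i\in\{1,\dots,n\}$ and $J$ a maximum independent set of $\Ga$. (One inclusion is the computation just given, with $\phi=\pi_i$.) For the reverse inclusion, given a maximum independent set $M$ of $\Ga^n$ one analyses the slices $M_a=\{\bar y:(a,\bar y)\in M\}\subseteq V(\Ga^{n-1})$ ($a\in V(\Ga)$): independence of $M$ forces that $M_a$ and $M_{a'}$ span no edge of $\Ga^{n-1}$ whenever $a\sim a'$ in $\Ga$, and, together with $\sum_a|M_a|=\alpha(\Ga)N^{n-1}$, the connectivity of $\Ga$, the vertex-transitivity of $\Ga^{n-1}$ and the tight value of $\alpha(\Ga^{n-1})$, an induction on $n$ pushes all the slices into alignment along one coordinate. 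Primitivity is precisely what rules out the intermediate configurations: one checks that the relation ``$u$ and $v$ belong to exactly the same maximum independent sets of $\Ga$'' is an $\Aut(\Ga)$-invariant equivalence relation, hence, since $\Ga$ has an edge (so maximum independent sets of $\Ga$ are proper) and $\Aut(\Ga)$ is primitive, it is the equality relation --- that is, the maximum independent sets of $\Ga$ separate its vertices --- and an analogous congruence argument carried out inside $\Ga^n$ disposes of the remaining cases for $M$. This lemma is genuinely combinatorial and depends essentially on non-bipartiteness (it already fails for $\Ga=K_2$, where $\Ga^n$ is a disjoint union of edges possessing far more maximum independent sets than the $\pi_i^{-1}(J)$); turning one equality of independence numbers into such a rigid description is where the work lies.

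Granting the lemma the proof concludes quickly. For each maximum independent set $I$ of $\Ga$ we may write $\phi^{-1}(I)=\pi_{k(I)}^{-1}(J_I)$, and $k(I)$ is uniquely determined, since $\pi_i^{-1}(J)=\pi_{i'}^{-1}(J')$ is impossible for $i\ne i'$ once $\Ga$ has an edge. A counting argument --- comparing $|\phi^{-1}(I\cap I')|=|I\cap I'|\,N^{n-1}$ with $|\pi_{k(I)}^{-1}(J_I)\cap\pi_{k(I')}^{-1}(J_{I'})|$, and using Theorem~\ref{thm:ao} and the connectivity of $\Ga$ --- shows that $k(I)$ is a single coordinate $k$, the same for all $I$. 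Since the maximum independent sets of $\Ga$ separate its vertices, any two $x,x'\in V(\Ga^n)$ with $x_k=x'_k$ lie in exactly the same sets $\phi^{-1}(I)$ and therefore satisfy $\phi(x)=\phi(x')$; thus $\phi$ factors through $\pi_k$, and, as noted, idempotence gives $\phi=\pi_k$. Hence every idempotent polymorphism of $\Ga$ is a projection, i.e.\ $\Ga$ is projective.
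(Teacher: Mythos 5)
First, note that the paper itself contains no proof of this statement: Theorem~\ref{thm:prim cores proj} is imported verbatim from Larose and Tardif \cite{MR1804825}, so the benchmark is their argument, which works directly with the idempotent polymorphism, extracting $\Aut(\Ga)$-invariant equivalence relations from its sections and fibres and using primitivity to collapse them; it does not pass through a classification of all maximum independent sets of $\Ga^n$. Your preliminary steps are sound: $(\Ga^n)^\ast\cong\Ga$, hence $\alpha(\Ga^n)=\alpha(\Ga)N^{n-1}$ by (\ref{eq:nohom1}); composing $\phi$ with the diagonal embedding gives a retraction of $\Ga^n$ onto a copy of its core, so Theorem~\ref{thm:order vt} yields fibres of size $N^{n-1}$ and each $\phi^{-1}(I)$ is a maximum independent set; and the observation that primitivity forces the maximum independent sets of $\Ga$ to separate vertices is a correct and genuinely useful use of the hypothesis.

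The genuine gap is the ``structural lemma'' you yourself flag as the heart of the matter: that \emph{every} maximum independent set of $\Ga^n$ equals $\pi_i^{-1}(J)$ for some coordinate $i$ and some maximum independent set $J$ of $\Ga$. Nothing in the proposal proves this; ``an induction on $n$ pushes all the slices into alignment'' is a hope, not an argument. Already for $n=2$ the slice analysis splits into the easy case where every slice $M_a$ is independent (then connectivity does force $M=\pi_2^{-1}(J)$) and the hard case where some slice contains an edge, and there the sketch gives no mechanism for ruling out mixed configurations (some slices large and non-independent, adjacent slices small or empty) beyond ad hoc counting that happens to work in examples such as $C_5$ or the Petersen graph. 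This lemma is a form of ``strong projectivity''/normality of extremal independent sets in tensor powers --- a statement strictly stronger than the theorem being proved, known only for special families (complete graphs, Kneser graphs, odd cycles) via nontrivial techniques, while even the \emph{value} of $\alpha(G\times H)$ for vertex-transitive factors was a long-standing open problem. So the plan replaces the theorem by a harder unproved assertion. Two smaller points: the constancy of $k(I)$ rests on $|I\cap I'|\ne\alpha(\Ga)^2/N$, which you do not justify in general; and the bipartite case cannot simply be set aside, since $K_2$ satisfies the hypotheses as literally stated yet is \emph{not} projective (the ternary majority map is an idempotent polymorphism that is no projection) --- harmless for the applications in this paper, where the core has odd prime order, but it shows the statement needs the non-bipartite restriction your argument already assumes.
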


\begin{theorem}[{\cite[Theorem 1.4]{MR1804825}}]
\label{thm:retr fac}
Let $\Ga$ be a vertex-transitive core. If $\Ga$ is projective, then whenever $\Ga$ is a retract of a categorical product of connected graphs, it is a retract of a factor.  
\end{theorem}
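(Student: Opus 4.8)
The plan is to combine the hypothesis that $\Gamma$ is a core with the polymorphism machinery: a retraction of $\Lambda := \Lambda_1 \times \cdots \times \Lambda_t$ onto $\Gamma$ will be shown, via projectivity, to ``see only one factor''. So suppose $\Gamma$ is a retract of $\Lambda$ with each $\Lambda_j$ connected; fix an induced-subgraph inclusion $\iota : \Gamma \to \Lambda$ and a retraction $r : \Lambda \to \Gamma$ with $r \circ \iota = \mathrm{id}_\Gamma$. Writing $p_j : \Lambda \to \Lambda_j$ for the $j$-th coordinate projection (a homomorphism, cf.\ Lemma \ref{lem:prop2.1}), put $\iota_j := p_j \circ \iota : \Gamma \to \Lambda_j$, so that $\iota(u) = (\iota_1(u), \dots, \iota_t(u))$ for all $u \in V(\Gamma)$. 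We may assume $\Gamma \ne K_1$, the other case being trivial.

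First I would build an idempotent polymorphism. Define $\psi : \Gamma^t \to \Lambda$ by $\psi(u_1, \dots, u_t) := (\iota_1(u_1), \dots, \iota_t(u_t))$. If $(u_1,\dots,u_t) \sim (v_1,\dots,v_t)$ in $\Gamma^t$ then $u_j \sim v_j$ in $\Gamma$ and hence $\iota_j(u_j) \sim \iota_j(v_j)$ in $\Lambda_j$ for every $j$, so by the definition of the categorical product $\psi(u_1,\dots,u_t) \sim \psi(v_1,\dots,v_t)$ in $\Lambda$; thus $\psi$ is a homomorphism. Therefore $\phi := r \circ \psi : \Gamma^t \to \Gamma$ is a polymorphism, and it is idempotent since $\phi(u,\dots,u) = r(\iota_1(u),\dots,\iota_t(u)) = r(\iota(u)) = u$. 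Because $\Gamma$ is projective, $\phi$ is a projection: there is an index $i$ with $\phi = \pi_i$, that is,
\begin{equation}
\label{eq:proj-id}
r\bigl(\iota_1(u_1), \dots, \iota_t(u_t)\bigr) = u_i \qquad \text{for all } u_1,\dots,u_t \in V(\Gamma).
\end{equation}

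Next I would reduce everything to producing one homomorphism $g : \Lambda_i \to \Gamma$. Together with $\iota_i : \Gamma \to \Lambda_i$, such a $g$ gives $\Gamma \leftrightarrow \Lambda_i$, so by Lemma \ref{lem:hom eq} $\Gamma \cong \Gamma^\ast \cong \Lambda_i^\ast$; since the core of a graph is always one of its retracts, $\Gamma$ is then (up to isomorphism) a retract of $\Lambda_i$, which is the conclusion. (Equivalently $g \circ \iota_i$ is an endomorphism of the core $\Gamma$, hence an automorphism, so $\iota_i$ is an induced embedding and $(g\iota_i)^{-1}\circ g$ retracts $\Lambda_i$ onto $\iota_i(\Gamma)$.)

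The hard part will be constructing $g$, and this is where the hypothesis that the factors are connected does the real work. Equation (\ref{eq:proj-id}) says that on the ``box'' $\iota_1(V(\Gamma)) \times \cdots \times \iota_t(V(\Gamma))$ the retraction $r$ depends only on its $i$-th coordinate (through $\iota_i^{-1}$); but one cannot just set $g(y) := r(\iota_1(w), \dots, y, \dots, \iota_t(w))$ with constant entries elsewhere, since $\Lambda$ has no loops and such a tuple is never an edge when $y \sim y'$ in $\Lambda_i$. Instead I would fix an edge $\{a,b\}$ of $\Gamma$ — so $\{\iota_j(a), \iota_j(b)\}$ is an edge of $\Lambda_j$ for each $j$ — and use connectedness of the factors $\Lambda_j$, $j \ne i$, to transport (\ref{eq:proj-id}) along walks: a walk in $\Lambda_i$ paired coordinatewise (for $j \ne i$) with an alternating walk on the edge $\{\iota_j(a), \iota_j(b)\}$ gives a walk in $\Lambda$ that $r$ maps into $\Gamma$, and by (\ref{eq:proj-id}) the image depends only on the $\Lambda_i$-coordinates and restricts to $\iota_i^{-1}$ on $\iota_i(V(\Gamma))$; connectedness of the $\Lambda_j$ is what allows this to be carried out for every vertex of $\Lambda_i$ with global consistency, yielding the homomorphism $g$. (If two or more factors are bipartite, $\Lambda$ may be disconnected; then one first replaces $\Lambda$ by the component containing $\iota(\Gamma)$, which still retracts onto $\Gamma$ and to which the same argument applies.) I expect this promotion of (\ref{eq:proj-id}) to an honest homomorphism $\Lambda_i \to \Gamma$ to be the main obstacle, since everything preceding it is essentially formal.
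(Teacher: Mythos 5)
A preliminary remark: the paper does not prove this statement; it quotes it verbatim from Larose and Tardif \cite[Theorem 1.4]{MR1804825}, so your proposal has to stand entirely on its own. Its first half does stand: the construction of the idempotent polymorphism $\phi = r\circ\psi$, the application of projectivity to conclude $\phi=\pi_i$, and the reduction of the whole theorem to producing a single homomorphism $g:\Lambda_i\to\Gamma$ (since $\Gamma$ is a core and $\iota_i$ already gives $\Gamma\to\Lambda_i$) are all correct, and this is indeed the standard opening move.

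The gap is in the construction of $g$, and you have correctly located the difficulty but not overcome it. The identity $r(\iota_1(u_1),\dots,\iota_t(u_t))=u_i$ constrains $r$ only on the ``box'' $\iota_1(V(\Gamma))\times\cdots\times\iota_t(V(\Gamma))$; it says nothing about $r$ at any tuple whose $i$-th coordinate lies outside $\iota_i(V(\Gamma))$, which is exactly where every walk reaching a general $y\in V(\Lambda_i)$ must go, so the assertion that ``the image depends only on the $\Lambda_i$-coordinates'' is unjustified there. Concretely, fix the edge $\{a,b\}$ and set $g_a(y):=r(\iota_1(a),\dots,y,\dots,\iota_t(a))$ and $g_b(y):=r(\iota_1(b),\dots,y,\dots,\iota_t(b))$; your walk construction produces exactly these two maps (selected by the parity of the walk), and what adjacency in $\Lambda$ actually yields is $y\sim y'\Rightarrow g_a(y)\sim g_b(y')$. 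That is a homomorphism $\Lambda_i\times K_2\to\Gamma$, not $\Lambda_i\to\Gamma$, and the former carries no information of the kind you need: every $\Lambda_i$ with an edge admits $\Lambda_i\times K_2\to K_2$, yet only bipartite ones admit $\Lambda_i\to K_2$. You would be done if $g_a=g_b$, and the displayed identity does force $g_a=g_b=\iota_i^{-1}$ on $\iota_i(V(\Gamma))$, but nothing you have written forces agreement off that set; the claimed ``global consistency'' is asserted, not proved, and connectedness of the factors alone cannot supply it. Closing this gap is precisely the content of \cite[Theorem 1.4]{MR1804825}: it requires invoking projectivity again, on further idempotent polymorphisms manufactured from $r$, rather than only once at the outset, and it is telling that your argument never uses the vertex-transitivity hypothesis at all. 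As it stands, the proposal establishes only the formal half of the theorem.
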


As mentioned earlier, our proof of Theorem \ref{thm:main} relies on the classification of imprimitive symmetric graphs of order a product of two distinct primes, obtained in \cite[Theorem 2.4]{MR884254}, \cite[Theorems 3-4]{MR1223693} and \cite[Theorem]{MR1223702}. We state this classification below but defer the definition of related graphs for technical reasons. 

\begin{theorem}
\label{thm:circu}
Let $p$ and $q$ be primes with $2 \le p < q$, and let $\Ga$ be an imprimitive symmetric graph of order $pq$. Then $\Ga$ is isomorphic to one of the graphs in Example \ref{ex:inc}, Definitions \ref{defn:2qr}, \ref{defn:3qr} and \ref{defn:pqrsu}, Example \ref{ex:lexprod} and Theorem \ref{thm:sym ms}. 
\end{theorem}

These graphs come in three classes, namely incidence and non-incidence graphs of two specific block designs, circulant graphs, and Maru\v{s}i\v{c}-Scapellato graphs. We will give their definitions and determine their cores in \S\ref{sec:incidence}, \S\ref{sec:circulants} and \S\ref{sec:cores ms}, respectively.

\section{Symmetric incidence and non-incidence graphs of order $2q$}
\label{sec:incidence}

Let $\DD$ be a $2$-design with point set $\bfP$ and block set $\bfB$. The \textit{incidence graph} of $\DD$, denoted by $X(\DD)$, is defined to be the bipartite graph with bipartition $\{\bfP, \bfB\}$ such that $v \in \bfP$ and $B \in \bfB$ are adjacent if and only if $v$ is incident to $B$ in $\DD$. The \textit{nonincidence graph} of $\DD$, denoted by $X'(\DD)$, is the bipartite graph with the same bipartition such that $v \in \bfP$ and $B \in \bfB$ are adjacent if and only if $v$ is not incident to $B$ in $\DD$. Since $X(\DD)$ and $X'(\DD)$ are bipartite with at least one edge, their cores are isomorphic to $K_2$.

\begin{example}
\label{ex:inc}
{\em 
Given a prime power $r$ and an integer $d \ge 2$, the symmetric design $\PG(d-1,r)$ has its points and blocks the points and hyperplanes respectively of the $(d-1)$-dimensional projective space over $\GF(r)$. It is noted in \cite{MR884254} that $X(\PG(d-1,r))$ and $X'(\PG(d-1,r))$ are symmetric graphs each with $2(r^d-1)/(r-1)$ vertices. Thus, when $(r^d-1)/(r-1)$ is a prime, these two graphs are symmetric graphs of order twice a prime.  

The unique $2$-$(11,5,2)$ design $H(11)$ has as its points the elements of $\mathbb{Z}_{11}$ and its blocks the 11 sets $R+i=\{x+i: x\in R\}$, where $i\in\mathbb{Z}_{11}$ and addition is undertaken in $\ZZZ_{11}$, and $R=\{1,3,4,5,9\}$ is the set of non-zero quadratic residues modulo $11$. It was noted in \cite{MR884254} that both $X(H(11))$ and $X'(H(11))$ are symmetric with order $22$, and $X(H(11))$ is isomorphic to the graph $G(2 \cdot 11,5)$ to be defined in Definition \ref{defn:2qr}. 

Since $X(\PG(d-1,r))$, $X'(\PG(d-1,r))$, $X(H(11))$ and $X'(H(11))$ are bipartite, their cores are all isomorphic to $K_2$, justifying lines 2-5 in Table \ref{tab:2}. 
}
\end{example}

\section{Cores of imprimitive symmetric circulant graphs of order $pq$}
\label{sec:circulants}

\textit{Throughout this section $p$ and $q$ are primes with $2 \le p < q$.}
The purpose of this section is to determine the cores of imprimitive symmetric circulants of order $pq$. To be self-contained we first give the definitions \cite{MR884254, MR1223693, MR1223702} of such circulants. We then determine their cores in subsequent subsections in this section.

\subsection{Symmetric circulant graphs of order $pq$}
\label{subsec:circu}

Let $p$ be a prime and $r$ a positive divisor of $p-1$. Denote by $H(p,r)$ the unique subgroup of $\Aut(\mathbb{Z}_p) \cong \mathbb{Z}_p^\ast$ with order $r$, where $\mathbb{Z}_p^\ast$ is the multiplicative group of units of $\ZZZ_p$.  

\begin{definition}
\label{defn:pr}
{\em 
Define $G(p,r)$ to be the circulant graph of order $p$ relative to $H(p, r)$. That is, $G(p,r)$ has vertex set $\mathbb{Z}_{p}$ such that $x, y \in \mathbb{Z}_{p}$ are adjacent if and only if $y - x \in H(p, r)$.
} 
\end{definition}

It was proved in \cite[Theorem 3]{MR0279000} that, for an odd prime $p$, a graph $\Gamma$ is a connected symmetric graph of order $p$ if and only if $\Gamma \cong G(p,r)$ for some even divisor $r$ of $p-1$. Moreover, $G(p,r)$ has valency $r$, and if $r<p-1$ then $\Aut(G(p,r)) \cong \mathbb{Z}_{p}\rtimes H(p,r)$ ($\le \AGL(1, p)$) is a Frobenius group in its action on the vertex set $\ZZZ_p$ of $G(p,r)$, while $G(p,p-1)=K_p$. (The fact that $\Aut(G(p,r))$ is a Frobenius group on $\ZZZ_p$ was also observed in \cite[Corollary 2.11]{2013arXiv1302.6652T} in a different setting.)

\begin{definition} 
\label{defn:2qr}
\rm
Let $A$ and $A'$ be two disjoint copies of $\mathbb{Z}_q$, and for each $i\in \mathbb{Z}_q$, denote the corresponding elements of $A$ and $A'$ by $i$ and $i'$, respectively. 

For each positive divisor $r$ of $q-1$, define $G(2q,r)$ \cite{MR884254} to be the graph with vertex set $A \cup A'$ and edge set $\{\{x,y'\}: x,y \in\mathbb{Z}_q \text{ and } y-x \in H(q,r)\}$.
 
For each positive even divisor $r$ of $q-1$, define $G(2,q,r)$ \cite{MR884254} to be the graph with vertex set $A \cup A'$ and edge set $\{\{x,y\}, \{x',y\}, \{x,y'\}, \{x',y'\}: x,y\in\mathbb{Z}_q \text{ and } y-x\in H(q,r)\}$.
\end{definition}

It was proved in \cite[Lemmas 2.1 and 2.2]{MR884254} that both $G(2q,r)$ and $G(2,q,r)$ are symmetric. We now show that they are both circulant graphs. In fact, in \cite[Section 2]{MR884254} it was shown that both $G(2q,r)$ and $G(2,q,r)$ have automorphisms $\tau$ and $\rho$ defined by $\tau(i)=i+1$, $\tau(i')=(i+1)'$, $\rho(i)=(-i)'$ and $\rho(i')=-i$. It can be verified that they also have automorphisms $\tau_a$ where $a\in H(q,r)$, defined by $\tau_a(i)=ai+1$ and $\tau_a(i')=(ai+1)'$. Thus they both have automorphism $\tau_{-1}\rho$, given by $\tau_{-1}\rho(i)=(i+1)'$ and $\tau_{-1}\rho(i')=i+1$. It can be verified that $\tau_{-1}\rho$ has order $2q$. On the other hand, $\left\langle \tau_{-1}\rho\right\rangle$ is transitive on $A \cup A'$, because for any $i,j\in A$, $(\tau_{-1}\rho)^{n_1}(i)=i+n_1=j$ for some even integer $n_1$, and for any $i\in A$ and $j'\in A'$, $(\tau_{-1}\rho)^{n_2}(i)=(i+n_2)'=j'$ for some odd integer $n_2$. (Note that $2$ generates $\mathbb{Z}_q$ as $q$ is an odd prime.) Now that $|A \cup A'| = |\left\langle \tau_{-1}\rho\right\rangle| = 2q$, it follows that $\left\langle \tau_{-1}\rho\right\rangle$ is regular on $A \cup A'$. Since $\left\langle \tau_{-1}\rho\right\rangle \le \Aut(G(2q,r))$ and $\left\langle \tau_{-1}\rho\right\rangle \le \Aut(G(2,q,r))$, we see that $G(2q,r)$ and $G(2,q,r)$ are both circulants.

\begin{definition} 
\label{defn:3qr}
\rm 
For each positive divisor $r$ of $q-1$, define $G(3q,r)$ \cite{MR1223693} to be the graph with vertex set $\mathbb{Z}_3 \times \mathbb{Z}_q$ and edge set $\{\{(i,x),(i+1,y)\}: i \in \mathbb{Z}_3, x, y \in \mathbb{Z}_q \text{ and } y-x\in H(q,r)\}$. 
\end{definition}

It was proved in \cite[Example 3.4]{MR1223693} that $G(3q,r)$ is a connected symmetric graph with order $3q$ and valency $2r$. Moreover, $G(3q,r)$ is a circulant graph by \cite[Lemma 3.6, Theorem 3]{MR1223693}. 

\begin{definition} 
\label{defn:pqrsu}
\rm
Let $s$ be an even divisor of $p-1$ and $r$ a divisor of $q-1$. Let $H(p,s)=\langle a\rangle\leq \mathbb{Z}_p^\ast$. Let $t\in \mathbb{Z}_q^\ast$ be such that $t^{s/2}\in -H(q,r)$, and let $u = \lcm(r, o(t))$ (least common multiple), where $o(t)$ is the order of $t$ in $\mathbb{Z}_q^\ast$. Define $G(pq;r,s,u)$ \cite{MR1223702} to be the graph with vertex set $\mathbb{Z}_p \times \mathbb{Z}_q$ such that $(i,x)$ and $(j,y)$ are adjacent if and only if there exists an integer $l$ such that $j-i=a^l$ and $y-x\in t^lH(q,r)$.
\end{definition}

Up to isomorphism $G(pq;r,s,u)$ is independent \cite{MR1223702} of the choice of $a$ and $t$ with $\lcm(r, o(t))=u$. It was proved in \cite[Theorem 3.5]{MR1223702} that $G(pq;r,s,u)$ is a connected symmetric graph of order $pq$ and valency $sr$, and moreover $G(pq;r,s,u)\cong G(pq;r',s',u')$ if and only if $r=r'$, $s=s'$ and $u=u'$. Furthermore, in the proof of \cite[Theorem 3.5]{MR1223702} it was shown that $G(pq;r,s,u)$ is a Cayley graph on $\ZZZ_p \times \ZZZ_q$. Since $p\neq q$ are primes, $\mathbb{Z}_p\times\mathbb{Z}_q\cong\mathbb{Z}_{pq}$ and hence $G(pq;r,s,u)$ is a circulant graph.

Denote by $K_{q,q}$ (respectively, $K_{q,q,q}$) the complete bipartite (respectively, tripartite) graph with $p$ vertices in each part of the bipartition (respectively, tripartition).  

\begin{example}
\label{ex:lexprod}
\rm 
The following graphs are symmetric circulants \cite{MR884254,MR1223693,MR1223702}:
\begin{itemize}
\item[\rm (a)] $K_2[\overline{K}_{q}] \cong K_{q,q}$, where $q \ge 3$; 
\item[\rm (b)] $K_3[\overline{K}_{q}] \cong K_{q,q,q}$, where $q \ge 5$;
\item[\rm (c)] $G(p,s)[\overline{K}_{q}]$ and $G(q,r)[\overline{K}_{p}]$, where $3 \leq p<q$, $s$ is an even divisor of $p-1$, and $r$ is an even divisor of $q-1$;  
\item[\rm (d)] $G(p,s)[\overline{K}_{q}]-qG(p,s)$ and $G(q,r)[\overline{K}_{p}]-pG(q,r)$, where $5\leq p<q$, $s$ is an even divisor of $p-1$, and $r$ is an even divisor of $q-1$.
\end{itemize}

As mentioned in \cite[Section 3]{MR1223702}, the graphs in Example \ref{ex:lexprod} are all circulants since each of them admits a cyclic group of order $pq$ acting regularly on the vertex set (where $p=2, 3$ in (a), (b) respectively).   
\end{example}

Definitions \ref{defn:2qr}, \ref{defn:3qr} and \ref{defn:pqrsu} and Example \ref{ex:lexprod} give all imprimitive symmetric circulant graphs of order a product of two distinct primes, listed in the second column in Table \ref{tab:cir}. We determine their cores in the remainder of this section.

\subsection{Lexicographic products}
\label{subsec:lex prod}

Since $G(2q,r)$ is a bipartite graph by Definition \ref{defn:2qr}, we have:
 
\begin{lemma}
\label{lem:2qr}
The core of $G(2q,r)$ is $K_2$.
\end{lemma}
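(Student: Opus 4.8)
The statement asserts that the core of $G(2q,r)$ is $K_2$. The plan is to exploit the fact that $G(2q,r)$ is bipartite with at least one edge, and then invoke the general fact about cores of bipartite graphs.

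First I would observe that by Definition \ref{defn:2qr} the graph $G(2q,r)$ has vertex set $A \cup A'$ and every edge joins a vertex of $A$ to a vertex of $A'$; hence $\{A, A'\}$ is a bipartition and $G(2q,r)$ is bipartite. Since $r$ is a positive divisor of $q-1$, the subgroup $H(q,r) \le \ZZZ_q^*$ is nonempty, so $G(2q,r)$ has at least one edge (for instance $\{0, y'\}$ for any $y \in H(q,r)$).

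Next I would recall that any bipartite graph with at least one edge has core $K_2$: the inclusion homomorphism $K_2 \to G(2q,r)$ exists because $G(2q,r)$ contains an edge, and conversely a proper $2$-colouring of $G(2q,r)$ (which exists since it is bipartite, and which here is simply the map sending every vertex of $A$ to one vertex of $K_2$ and every vertex of $A'$ to the other) gives a homomorphism $G(2q,r) \to K_2$. Thus $G(2q,r) \leftrightarrow K_2$, and since $K_2$ is itself a core, Lemma \ref{lem:hom eq} yields $G(2q,r)^* \cong K_2$. This is essentially the same argument already recorded in \S\ref{sec:incidence} for the incidence and non-incidence graphs.

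There is no real obstacle here; the only thing to be careful about is confirming that $G(2q,r)$ genuinely has an edge (which could fail only if $H(q,r)$ were empty, which it is not) so that the core is $K_2$ rather than $\overline{K}_1$. The entire proof is a one-line application of the observation made at the start of \S\ref{sec:incidence} together with Lemma \ref{lem:hom eq}.
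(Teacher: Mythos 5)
Your proposal is correct and is exactly the argument the paper uses: the paper's proof of Lemma \ref{lem:2qr} consists of the single observation that $G(2q,r)$ is bipartite (by Definition \ref{defn:2qr}), combined with the standard fact, already invoked in \S\ref{sec:incidence}, that a non-empty bipartite graph has core $K_2$. Your additional check that $H(q,r)\neq\emptyset$ guarantees at least one edge is a sensible (if routine) precaution that the paper leaves implicit.
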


\begin{theorem}
\label{thm:lexprod}
The core of $G(p,s)[\overline{K}_{q}]$ is $G(p,s)$, and the core of $G(q,r)[\overline{K}_{p}]$ is $G(q,r)$. 
\end{theorem}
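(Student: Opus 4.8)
The plan is to show that the core of a lexicographic product $\Delta[\overline{K}_m]$, where $\Delta$ is a non-bipartite core (in our case $\Delta = G(p,s)$ with $s$ an even divisor of $p-1$, or $\Delta = G(q,r)$ with $r$ an even divisor of $q-1$), is $\Delta$ itself. First I would observe that $\Delta$ is indeed a core: this holds because $\Delta$ is a connected vertex-transitive graph of prime order (Corollary \ref{coro:prime order}). Note also that $\Delta$ has at least one edge and odd order, hence is non-bipartite, so that the clique-number machinery of Lemma \ref{prop:inva} is available. Since $\Delta$ is an induced subgraph of $\Delta[\overline{K}_m]$ (embed via $u \mapsto (u, x_0)$ for a fixed $x_0$), the inclusion gives $\Delta \to \Delta[\overline{K}_m]$; conversely, the map $(u,x) \mapsto u$ is a homomorphism $\Delta[\overline{K}_m] \to \Delta$ (adjacent vertices of $\Delta[\overline{K}_m]$ either lie in distinct fibres over an edge of $\Delta$, or in the same fibre, which cannot happen since fibres induce $\overline{K}_m$). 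Hence $\Delta[\overline{K}_m] \leftrightarrow \Delta$, and by Lemma \ref{lem:hom eq} their cores coincide; since $\Delta$ is a core, $(\Delta[\overline{K}_m])^\ast \cong \Delta$.

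The only point needing real care is that this argument applies verbatim to both graphs in the statement: $G(p,s)[\overline{K}_q]$ (take $\Delta = G(p,s)$, $m = q$) and $G(q,r)[\overline{K}_p]$ (take $\Delta = G(q,r)$, $m = p$), under the stated hypotheses that $s$ and $r$ are even divisors of, respectively, $p-1$ and $q-1$, which by \cite[Theorem 3]{MR0279000} guarantee that $G(p,s)$ and $G(q,r)$ are connected symmetric graphs of prime order and hence cores. In the borderline cases — $G(p,s) = K_p$ when $s = p-1$, and $G(q,r) = K_q$ when $r = q-1$ — the conclusion is unchanged, since complete graphs are cores.

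I do not anticipate a serious obstacle here: the homomorphic equivalence $\Delta[\overline{K}_m] \leftrightarrow \Delta$ is immediate from the projection and inclusion maps, and the fact that $\Delta$ is a core is a direct consequence of Corollary \ref{coro:prime order}. The only thing to be slightly careful about is to phrase the retraction explicitly if one prefers the ``retract'' language of the introduction over the ``homomorphic equivalence'' language: the projection $(u,x) \mapsto u$ is a retraction of $\Delta[\overline{K}_m]$ onto the induced copy $\{(u,x_0): u \in V(\Delta)\} \cong \Delta$, and since $\Delta$ has no proper retracts this copy is the core. Either formulation gives the result in a few lines.
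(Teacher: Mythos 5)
Your proposal is correct and follows essentially the same route as the paper: the inclusion $\Delta \to \Delta[\overline{K}_m]$ and the projection $(u,x)\mapsto u$ give homomorphic equivalence, and then Lemma \ref{lem:hom eq} together with Corollary \ref{coro:prime order} (a vertex-transitive graph of prime order is a core) finishes the argument. The aside about non-bipartiteness and Lemma \ref{prop:inva} is unnecessary (and would fail for $p=2$), but it plays no role in the argument, which goes through verbatim.
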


\begin{proof}
Denote $\Gamma := G(p,s)[\overline{K}_{q}]$. Since for a fixed $i \in V(\overline{K}_{q})$ the subset $\{(x, i): x \in V(G(p,s))\}$ of $V(\Ga)$ induces a subgraph of $\Gamma$ isomorphic to $G(p,s)$, we have $G(p,s) \rightarrow\Gamma$. On the other hand, we have $\Gamma \rightarrow G(p,s)$ via the projection $V(\Gamma)\rightarrow V(G(p,s)), (x, i) \mapsto x$. Therefore, $\Gamma\leftrightarrow G(p,s)$ and so $\Gamma^\ast\cong G(p,s)^\ast$ by Lemma \ref{lem:hom eq}. Since $G(p,s)$ is a vertex-transitive graph with prime order, we have $G(p,s)^\ast=G(p,s)$ by Corollary \ref{coro:prime order}. Hence the core of $G(p,s)[\overline{K}_{q}]$ is $G(p,s)$. 
Similarly, one can show that the core of $G(q,r)[\overline{K}_{p}]$ is $G(q,r)$.  
\qed 
\end{proof}

\begin{theorem}
\label{lem:2qr1}
$G(2,q,r)\cong G(q,r)[\overline{K}_2]$, and the core of $G(2,q,r)$ is $G(q,r)$.
\end{theorem}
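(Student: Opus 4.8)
The plan is to establish the isomorphism $G(2,q,r)\cong G(q,r)[\overline{K}_2]$ directly from the definitions, and then invoke Theorem~\ref{thm:lexprod} to conclude that the core is $G(q,r)$.

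First I would write down an explicit bijection between the vertex sets. Recall $G(2,q,r)$ has vertex set $A\cup A'$, where $A$ and $A'$ are two disjoint copies of $\mathbb{Z}_q$, while $G(q,r)[\overline{K}_2]$ has vertex set $\mathbb{Z}_q\times\{0,1\}$ (identifying $V(\overline{K}_2)=\{0,1\}$). The natural candidate is $f:i\mapsto(i,0)$ and $f:i'\mapsto(i,1)$ for $i\in\mathbb{Z}_q$. Then I would check that $f$ preserves adjacency in both directions. By Definition~\ref{defn:2qr}, the edges of $G(2,q,r)$ are exactly the pairs $\{x,y\},\{x',y\},\{x,y'\},\{x',y'\}$ with $y-x\in H(q,r)$; in particular there are no edges of the form $\{x,x'\}$ (since $0\notin H(q,r)$), and two vertices from the same copy of $\mathbb{Z}_q$, or from different copies, are adjacent precisely when the difference of their underlying labels lies in $H(q,r)$. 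By Definition~\ref{defn:prod} and Definition~\ref{defn:pr}, $(i,\epsilon)$ and $(j,\delta)$ are adjacent in $G(q,r)[\overline{K}_2]$ if and only if $\{i,j\}\in E(G(q,r))$ (i.e. $j-i\in H(q,r)$), or $i=j$ and $\{\epsilon,\delta\}\in E(\overline{K}_2)$; the latter never happens since $\overline{K}_2$ has no edges. So adjacency in $G(q,r)[\overline{K}_2]$ is exactly "$j-i\in H(q,r)$" regardless of the second coordinates, which matches the edge set of $G(2,q,r)$ under $f$. This gives the isomorphism.

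Then the core computation is immediate: by Theorem~\ref{thm:lexprod}, the core of $G(q,r)[\overline{K}_p]$ is $G(q,r)$ for the relevant primes, and the case $p=2$ of that argument works verbatim (one has $G(q,r)\rightarrow G(q,r)[\overline{K}_2]$ via inclusion of a fibre and $G(q,r)[\overline{K}_2]\rightarrow G(q,r)$ via the projection, so the two graphs are homomorphically equivalent, and $G(q,r)$ is a core by Corollary~\ref{coro:prime order} since it has prime order $q$). Alternatively one can simply note $G(2,q,r)\cong G(q,r)[\overline{K}_2]$ and then cite the lexicographic-product argument directly. Hence $G(2,q,r)^\ast\cong G(q,r)$.

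There is no real obstacle here; the only thing requiring a little care is making sure the edge set of $G(2,q,r)$ in Definition~\ref{defn:2qr} genuinely has no edges within a "vertical pair" $\{i,i'\}$ and that every horizontal/cross edge depends only on the $\mathbb{Z}_q$-labels, so that it indeed coincides with the lexicographic product $G(q,r)[\overline{K}_2]$ rather than, say, $G(q,r)\times K_2$ or $G(2q,r)$. Since $r$ is required to be even here, $G(q,r)$ is a genuine (non-empty, connected, non-bipartite when $r\ge 2$... in fact $-1\in H(q,r)$) symmetric graph of order $q$, so Corollary~\ref{coro:prime order} applies and the conclusion follows.
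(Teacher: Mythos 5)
Your proposal is correct and follows essentially the same route as the paper: the same explicit bijection $i\mapsto(i,0)$, $i'\mapsto(i,1)$ establishing $G(2,q,r)\cong G(q,r)[\overline{K}_2]$, followed by an appeal to Theorem~\ref{thm:lexprod} (whose proof indeed works verbatim for $p=2$). Your extra care in checking that there are no edges within a vertical pair and that adjacency depends only on the $\mathbb{Z}_q$-labels is exactly the verification the paper leaves implicit.
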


\begin{proof}
The circulant $G(q,r)$ has vertex set $\ZZZ_q$, with $x, y \in \ZZZ_q$ adjacent if and only if $y-x \in H(q,r)$. The lexicographic product $G(q,r)[\overline{K}_2]$ can be thought as defined on the vertex set $\ZZZ_q \times \ZZZ_2$, with $(x, i), (y, j) \in \ZZZ_q \times \ZZZ_2$ adjacent if and only if $x$ and $y$ are adjacent in $G(q,r)$. Thus, using the notation in Definition \ref{defn:2qr}, one can see that the map
$$
A \cup A' \rightarrow \ZZZ_q \times \ZZZ_2: x \mapsto (x,0),\;\, x' \mapsto (x,1),\;\, x \in \ZZZ_q
$$
defines an isomorphism from $G(2,q,r)$ to $G(q,r)[\overline{K}_2]$. Therefore, $G(2,q,r)\cong G(q,r)[\overline{K}_2]$ and so the core of $G(2,q,r)$ is $G(q,r)$ by Theorem \ref{thm:lexprod}. 
\qed
\end{proof}

\subsection{Deleted lexicographic products}
\label{subsec:dellexprod}

\begin{lemma}
\label{lem:iso del lex}
$G(p,s)[\overline{K}_{q}]-qG(p,s) = G(p,s)\times K_{q}$ and $G(q,r)[\overline{K}_{p}]-pG(q,r) = G(q,r)\times K_{p}$.
\end{lemma}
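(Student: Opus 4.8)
The plan is to prove the two stated identities directly by comparing edge sets, since both sides have the same vertex set by construction. I will focus on the first identity $G(p,s)[\overline{K}_{q}]-qG(p,s) = G(p,s)\times K_{q}$; the second is identical with $(p,s)$ replaced by $(q,r)$ and $q$ by $p$.

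First I would fix the common vertex set. Both $G(p,s)[\overline{K}_q]$ and $G(p,s)\times K_q$ have vertex set $V(G(p,s))\times V(\overline{K}_q) = V(G(p,s)) \times V(K_q) = \ZZZ_p \times \ZZZ_q$, so there is nothing to check there. Next I would recall from Definition \ref{defn:prod} exactly which edges get deleted: $G(p,s)[\overline{K}_q]-qG(p,s)$ is obtained from $G(p,s)[\overline{K}_q]$ by removing all edges $\{(u,x),(v,x)\}$ with $\{u,v\}\in E(G(p,s))$ and $x \in V(\overline{K}_q)$. Since $\overline{K}_q$ has no edges, the edge set of $G(p,s)[\overline{K}_q]$ consists precisely of the pairs $\{(u,x),(v,y)\}$ with $\{u,v\}\in E(G(p,s))$ (and $x,y$ arbitrary), so after deletion the surviving edges are exactly those pairs $\{(u,x),(v,y)\}$ with $\{u,v\}\in E(G(p,s))$ and $x \ne y$.

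Then I would compare this with $E(G(p,s)\times K_q)$. By definition $\{(u,x),(v,y)\} \in E(G(p,s)\times K_q)$ iff $\{u,v\}\in E(G(p,s))$ and $\{x,y\}\in E(K_q)$; and $\{x,y\}\in E(K_q)$ is equivalent to $x \ne y$ since $K_q$ is complete. Hence the two edge sets coincide, which gives the first identity. For the second identity the argument is word-for-word the same, deleting the fibre-copies of $G(q,r)$ from $G(q,r)[\overline{K}_p]$ and using that $K_p$ is complete on $\ZZZ_p$.

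I do not expect any genuine obstacle here: the statement is essentially a definitional unwinding, and the only thing to be careful about is matching the convention in Definition \ref{defn:prod} (which edges of the lexicographic product are deleted, and that $\overline{K}_q$ contributes no "internal" edges) with the definition of the categorical product. The one-line slogan is that deleting a copy of $\Ga$ for every vertex of $\overline{K}_d$ from $\Ga[\overline{K}_d]$ forces the two coordinates of an edge to differ, which is exactly the condition imposed by the complete graph $K_d$ in $\Ga \times K_d$.
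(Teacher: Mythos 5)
Your proof is correct and follows essentially the same route as the paper: both arguments identify the common vertex set $\ZZZ_p\times\ZZZ_q$ and then check directly that adjacency in $G(p,s)[\overline{K}_{q}]-qG(p,s)$ amounts to ``adjacent in $G(p,s)$ in the first coordinate and distinct in the second,'' which is exactly adjacency in $G(p,s)\times K_{q}$. Your added remark that $\overline{K}_q$ contributes no internal edges is a correct and slightly more explicit unwinding of Definition \ref{defn:prod}, but it does not change the substance of the argument.
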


\begin{proof}
We may think of $G(p,s)[\overline{K}_{q}]$ as defined on $\ZZZ_p \times \ZZZ_q$. Then $(x, i), (y, j) \in \ZZZ_p \times \ZZZ_q$ are adjacent in $G(p,s)[\overline{K}_{q}]-qG(p,s)$ $\Leftrightarrow$ $x, y \in \ZZZ_p$ are adjacent in $G(p,s)$ and $i \ne j$ $\Leftrightarrow$ $x, y \in \ZZZ_p$ are adjacent in $G(p,s)$ and $i, j \in \ZZZ_q$ are adjacent in $K_{q}$ $\Leftrightarrow$ $(x, i), (y, j)$ are adjacent in 
$G(p,s)\times K_{q}$. Hence $G(p,s)[\overline{K}_{q}]-qG(p,s) = G(p,s)\times K_{q}$. Similarly, $G(q,r)[\overline{K}_{p}]-pG(q,r) = G(q,r)\times K_{p}$.  
\qed
\end{proof}

\begin{lemma}
\label{lem:pq circ hom}
Suppose that $s$ is an even divisor of $p-1$ and $r$ is an even divisor of $q-1$. 
Let $\Gamma = G(p,s)\times G(q,r)$. Then $\Gamma$ is not a core if and only if one of the following occurs:
\begin{itemize}
\item[\rm (a)] $G(p,s)\rightarrow G(q,r)$, in which case $\Gamma^\ast\cong G(p,s)$;
\item[\rm (b)] $G(q,r)\rightarrow G(p,s)$, in which case $\Gamma^\ast\cong G(q,r)$.
\end{itemize}
\end{lemma}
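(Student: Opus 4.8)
The plan is to exploit the structure of $G(p,s)$ and $G(q,r)$ as vertex-transitive graphs of prime order, together with the general machinery on categorical products from Lemmas~\ref{lem:prop2.1} and the projectivity results (Theorems~\ref{thm:prim cores proj} and \ref{thm:retr fac}). First I would note that both $G(p,s)$ and $G(q,r)$, being connected symmetric graphs of prime order (Corollary~\ref{coro:prime order}), are cores, and that they are non-bipartite because $s$ and $r$ are even so these circulants have odd order and odd valency — actually the cleaner point is that a connected vertex-transitive graph of odd prime order with at least one edge cannot be bipartite. Hence each is its own core, and each is a vertex-transitive core; moreover $\Aut(G(p,s))$ is either $\mathbb{Z}_p\rtimes H(p,s)$ (a Frobenius group, hence primitive on $\mathbb{Z}_p$ since its point stabilizer is a maximal subgroup as $|\mathbb{Z}_p|$ is prime) or, if $s=p-1$, the full symmetric group on $K_p$, which is $2$-transitive and so primitive. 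Either way $G(p,s)$ is $\Aut(G(p,s))$-primitive, and similarly for $G(q,r)$. By Theorem~\ref{thm:prim cores proj} both graphs are therefore projective.

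Next I would handle the ``if'' direction. If $G(p,s)\rightarrow G(q,r)$, then by Lemma~\ref{lem:prop2.1}(c) (with $\Psi = G(q,r)$, $\Gamma = G(p,s)$) we get $\Gamma = G(p,s)\times G(q,r)\leftrightarrow G(p,s)$, and since $G(p,s)$ is a core, $\Gamma^\ast\cong G(p,s)$ by Lemma~\ref{lem:hom eq}; in particular $\Gamma$ is not a core because $|V(\Gamma)| = pq > p = |V(\Gamma^\ast)|$. The case $G(q,r)\rightarrow G(p,s)$ is symmetric, giving $\Gamma^\ast\cong G(q,r)$. This establishes both (a) and (b), including the identification of the core in each case.

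For the ``only if'' direction, suppose $\Gamma = G(p,s)\times G(q,r)$ is not a core; I must show one of the two homomorphisms exists. By Lemma~\ref{lem:trans}, $\Gamma$ is symmetric (a product of two symmetric graphs), and since each factor is connected non-complete-or-complete of prime order, $\Gamma$ is connected unless one factor is bipartite — but neither factor is bipartite, so $\Gamma$ is connected. Since $\Gamma$ is symmetric and not a core, $\Gamma^\ast$ is a proper retract, vertex-transitive of order dividing $pq$ (Theorem~\ref{thm:order vt}), so $|V(\Gamma^\ast)| \in \{1, p, q\}$; it is not $1$ since $\Gamma$ has an edge, so $|V(\Gamma^\ast)| \in \{p,q\}$. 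Now $G(p,s)$ is a projective vertex-transitive core and it is a retract of the categorical product $\Gamma = G(p,s)\times G(q,r)$ of connected graphs (the inclusion $G(p,s)\hookrightarrow\Gamma$ composed with the projection $\pi_1$ onto $G(p,s)$ — wait, more carefully: $G(p,s)$ embeds as an induced subgraph and the first projection retracts onto it only if $G(q,r)$ has a loop, which it does not). Instead I would argue directly: let $\phi:\Gamma\to\Gamma^\ast$ be a retraction. Composing the projections $\pi_1:\Gamma\to G(p,s)$, $\pi_2:\Gamma\to G(q,r)$ with a homomorphism $\Gamma^\ast\to\Gamma$ (inclusion) shows $\Gamma^\ast\to G(p,s)$ and $\Gamma^\ast\to G(q,r)$, and conversely by Lemma~\ref{lem:prop2.1}(b), since $\Gamma^\ast\to G(p,s)$ and $\Gamma^\ast\to G(q,r)$ we recover $\Gamma^\ast\to\Gamma$, consistent but not yet conclusive. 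The decisive step: apply Theorem~\ref{thm:retr fac} to a factor equal to $\Gamma^\ast$. Since $\Gamma^\ast$ is a vertex-transitive core and — by the same primitivity argument as above, as $|V(\Gamma^\ast)|$ is prime so $\Aut(\Gamma^\ast)$ is primitive — $\Gamma^\ast$ is projective; as $\Gamma^\ast$ is a retract of $\Gamma = G(p,s)\times G(q,r)$, a categorical product of connected graphs, Theorem~\ref{thm:retr fac} gives that $\Gamma^\ast$ is a retract of one of the factors. A retract of $G(p,s)$, which is a core, must be $G(p,s)$ itself (up to isomorphism), so $\Gamma^\ast\cong G(p,s)$ or $\Gamma^\ast\cong G(q,r)$. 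In the first case $\Gamma\leftrightarrow G(p,s)$, so $G(p,s)\times G(q,r)\to G(p,s)$ trivially and $G(p,s)\to G(p,s)\times G(q,r)\to G(p,s)$, but crucially $G(p,s)\to\Gamma\to G(q,r)$ gives $G(p,s)\to G(q,r)$, which is (a); symmetrically the second case gives (b).

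The main obstacle I anticipate is the careful verification that each graph involved ($G(p,s)$, $G(q,r)$, and $\Gamma^\ast$) is genuinely $\Aut$-primitive so that Theorems~\ref{thm:prim cores proj} and \ref{thm:retr fac} apply — this rests on the prime-order point-stabilizer-maximality argument and must be stated cleanly — together with the bookkeeping to rule out $\Gamma$ being disconnected or bipartite (needed so that ``categorical product of connected graphs'' hypotheses in Theorem~\ref{thm:retr fac} are met and so that Lemma~\ref{prop:inva}-style non-bipartiteness is available). The homomorphism-chasing at the end is routine once $\Gamma^\ast$ is pinned down to one of the two factors.
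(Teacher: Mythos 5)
Your proposal is correct and follows essentially the same route as the paper: the forward direction via Lemma \ref{lem:prop2.1}(c) and Lemma \ref{lem:hom eq}, and the converse by observing that $\Gamma^\ast$ is a vertex-transitive (indeed symmetric) core of prime order, hence $\Aut(\Gamma^\ast)$-primitive and therefore projective by Theorem \ref{thm:prim cores proj}, so that Theorem \ref{thm:retr fac} forces $\Gamma^\ast$ to be a retract of a factor. The only differences are cosmetic: the paper does not bother establishing projectivity of the factors $G(p,s)$ and $G(q,r)$ themselves (only $\Gamma^\ast$'s projectivity is needed), and the connectivity/bipartiteness bookkeeping you flag is needed only for the factors, not for the product $\Gamma$.
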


\begin{proof}
Since $s$ is an even divisor of $p-1$ and $r$ is an even divisor of $q-1$, both $G(p,s)$ and $G(q,r)$ are symmetric. Thus $\Ga$ is symmetric by Lemma \ref{lem:trans}. Moreover, both $G(p,s)$ and $G(q,r)$ are cores by Corollary \ref{coro:prime order}.
 
If $G(p,s)\rightarrow G(q,r)$, then $\Gamma\leftrightarrow G(p,s)$ by Lemma \ref{lem:prop2.1}, and so $\Gamma^\ast\cong G(p,s)^\ast$ by Lemma \ref{lem:hom eq}. Since $G(p,s)$ is a core, it follows that $\Gamma^\ast\cong G(p,s)$ and so $\Gamma$ is not a core. Similarly, if $G(q,r)\rightarrow G(p,s)$, then $\Gamma^\ast\cong G(q,r)$ and $\Gamma$ is not a core.

In the rest of this proof we assume that $\Gamma$ is not a core. By Theorems \ref{thm:sym cores} and \ref{thm:order vt}, $\Gamma^\ast$ is a symmetric graph of prime order (and hence is isomorphic to a circulant graph), and so $\Aut(\Gamma^\ast)$ is primitive on $V(\Gamma^\ast)$. Thus, by Theorem \ref{thm:prim cores proj}, $\Gamma^\ast$ is projective. Since $\Gamma^\ast$ is vertex-transitive and is a retract of $\Gamma$, it follows from Theorem \ref{thm:retr fac} that $\Gamma^\ast$ is a retract of either $G(p,s)$ or $G(q,r)$. Since both $G(p,s)$ and $G(q,r)$ are cores, we have either $\Gamma^\ast\cong G(p,s)$ or $\Gamma^\ast\cong G(q,r)$. Since $\Gamma^\ast \leftrightarrow \Gamma$, $\Gamma\rightarrow G(p,s)$ and $\Gamma\rightarrow G(q,r)$, we have $\Gamma^\ast\rightarrow G(p,s)$ and $\Gamma^\ast\rightarrow G(q,r)$. Therefore, if $\Gamma^\ast\cong G(p,s)$ then $G(p,s)\rightarrow G(q,r)$, and if $\Gamma^\ast\cong G(q,r)$ then $G(q,r)\rightarrow G(p,s)$.  
\qed
\end{proof}

\begin{theorem}
\label{thm:del lex prod}
Suppose that $s$ is an even divisor of $p-1$ and $r$ is an even divisor of $q-1$. 
\begin{itemize}
\item[\rm (a)] If $\Gamma = G(p,s)[\overline{K}_{q}]-qG(p,s)$, then $\Gamma^\ast\cong G(p,s)$. 
\item[\rm (b)] If $\Gamma = G(q,r)[\overline{K}_{p}]-pG(q,r)$, then exactly one of the following occurs:
\begin{enumerate}[\rm (1)]
	\item $\chi(G(q,r))\leq p$, in which case $\Gamma^\ast\cong G(q,r)$;
	\item $\omega(G(q,r))\geq p$, in which case $\Gamma^\ast\cong K_p$;
	\item $\chi(G(q,r))> p>\omega(G(q,r))$, in which case $\Gamma$ is a core.
\end{enumerate}
\end{itemize}
\end{theorem}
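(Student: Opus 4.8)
The plan is to reduce everything to the categorical products identified in Lemma~\ref{lem:iso del lex} and then apply Lemma~\ref{lem:pq circ hom} together with the elementary facts about chromatic and clique numbers.

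\textbf{Part (a).} By Lemma~\ref{lem:iso del lex}, $\Gamma = G(p,s)\times K_q$. Since $q > p$ and $G(p,s)$ has order $p$, its chromatic number satisfies $\chi(G(p,s)) \le p < q = \chi(K_q)$, so there is a homomorphism $G(p,s)\rightarrow K_q$. Hence by Lemma~\ref{lem:prop2.1}(c) we get $\Gamma = K_q\times G(p,s)\leftrightarrow G(p,s)$, and since $G(p,s)$ is a core by Corollary~\ref{coro:prime order}, Lemma~\ref{lem:hom eq} gives $\Gamma^\ast\cong G(p,s)$. (Equivalently one can invoke Lemma~\ref{lem:pq circ hom}, but one has to be slightly careful: Lemma~\ref{lem:pq circ hom} is about $G(p,s)\times G(q,r)$, whereas here the second factor is $K_q = G(q,q-1)$; since $q-1$ is even and a divisor of $q-1$, that lemma does apply, and case~(a) of it holds because $G(p,s)\rightarrow K_q$.)

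\textbf{Part (b).} By Lemma~\ref{lem:iso del lex}, $\Gamma = G(q,r)\times K_p$. Again $K_p = G(p,p-1)$ with $p-1$ even, so Lemma~\ref{lem:pq circ hom} applies to $\Gamma = G(q,r)\times G(p,p-1)$. First I would dispose of the case $\chi(G(q,r))\le p$: then $G(q,r)\rightarrow K_p$, so by Lemma~\ref{lem:prop2.1}(c) $\Gamma\leftrightarrow G(q,r)$, and since $G(q,r)$ is a core (Corollary~\ref{coro:prime order}), $\Gamma^\ast\cong G(q,r)$, giving~(1). Next, the case $\omega(G(q,r))\ge p$: then $K_p\rightarrow G(q,r)$ (map into a $p$-clique), so by Lemma~\ref{lem:prop2.1}(c) $\Gamma\leftrightarrow K_p$, and $K_p$ is a core, so $\Gamma^\ast\cong K_p$, giving~(2). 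Note these two cases are mutually exclusive: if both held we would have $p\le\omega(G(q,r))\le\chi(G(q,r))\le p$, forcing $G(q,r)=K_q$, i.e. $r=q-1$; but then $\omega(G(q,r)) = q > p$, so $\chi(G(q,r))\le p$ fails — contradiction. So at most one of~(1),(2) occurs.

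For case~(3), suppose $\chi(G(q,r)) > p > \omega(G(q,r))$; I must show $\Gamma$ is a core. Suppose not. By Lemma~\ref{lem:pq circ hom} (with second factor $K_p$), either $G(q,r)\rightarrow K_p$ or $K_p\rightarrow G(q,r)$. The first would give $\chi(G(q,r))\le\chi(K_p)=p$, contradicting $\chi(G(q,r))>p$; the second would give $\omega(G(q,r))\ge\omega(K_p)=p$ (a homomorphic image of $K_p$ is a $p$-clique), contradicting $p>\omega(G(q,r))$. Hence $\Gamma$ is a core, giving~(3). Finally, since for any $G(q,r)$ exactly one of the three alternatives "$\chi\le p$", "$\omega\ge p$", "$\chi>p>\omega$" holds (the first two being mutually exclusive as shown, and together with the third exhausting all possibilities because $\omega\le\chi$), exactly one of~(1),(2),(3) occurs, completing the proof.

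The only genuinely delicate point — the "main obstacle," though it is minor — is verifying the mutual exclusivity and exhaustiveness of the three cases in~(b), i.e. ruling out the simultaneous occurrence of $\chi(G(q,r))\le p$ and $\omega(G(q,r))\ge p$; this hinges on the observation that equality $\omega = \chi = p$ would force $G(q,r)$ to be a complete graph, which it is not once $r < q-1$, and is a complete graph $K_q$ with $\omega = q > p$ when $r = q-1$. Everything else is a direct application of Lemmas~\ref{lem:iso del lex}, \ref{lem:prop2.1}, \ref{lem:hom eq}, \ref{lem:pq circ hom} and Corollary~\ref{coro:prime order}. \qed
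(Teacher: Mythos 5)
Your proposal is correct and follows essentially the same route as the paper: reduce via Lemma~\ref{lem:iso del lex} to the categorical products $G(p,s)\times K_q$ and $G(q,r)\times K_p$, apply Lemma~\ref{lem:pq circ hom} (equivalently Lemma~\ref{lem:prop2.1}(c) plus Corollary~\ref{coro:prime order}), and rule out the overlap of cases (1) and (2) by observing that $\omega(G(q,r))=\chi(G(q,r))$ would force $G(q,r)$, being a core of prime order, to be complete. The paper's proof of mutual exclusivity is exactly your final observation, so no substantive difference.
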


\begin{proof}
(a) Let $\Gamma:=G(p,s)[\overline{K}_{q}]-qG(p,s)$. Since $G(q,q-1) \cong K_q$, by Lemma \ref{lem:iso del lex}, $\Gamma \cong G(p,s)\times G(q,q-1)$. Since $p<q$, we have $\chi(G(p,s))< q$ and so $G(p,s)\rightarrow G(q,q-1)$. Therefore, by Lemma \ref{lem:pq circ hom}, $\Gamma^\ast\cong G(p,s)$.  

(b) Now consider $\Gamma:=G(q,r)[\overline{K}_{p}]-pG(q,r)$. Since $G(p,p-1) \cong K_p$, by Lemma \ref{lem:iso del lex}, $\Gamma \cong G(q,r) \times G(p,p-1)$. Thus, by Lemma \ref{lem:pq circ hom}, if $G(q,r)\rightarrow G(p,p-1)$ (that is, if $\chi(G(q,r))\leq p$), then $\Gamma^\ast\cong G(q,r)$; if $G(p,p-1)\rightarrow G(q,r)$ (that is, if $\omega(G(q,r))\geq p$), then $\Gamma^\ast\cong G(p,p-1)\cong K_p$; and if none of these cases occurs (that is, if $\chi(G(q,r))> p>\omega(G(q,r))$), then $\Gamma$ is a core. Therefore, at least one of cases (1)-(3) occurs. If, say, $\omega(G(q,r))=\chi(G(q,r))=k$, then $G(q,r)^\ast\cong K_k$. Since $G(q,r)$ is a core by Corollary \ref{coro:prime order}, this happens precisely when $r = q-1 = k-1$. Since $p < q$, it follows that exactly one of (1)-(3) occurs. 
\qed
\end{proof}

\subsection{Categorical products}
\label{subsec:Cat prod}

\begin{lemma}
\label{lem:3qr}
$G(3q,r) = G(3q;r,2,u)$, where $u=r$ if $r$ is even, and $u=2r$ if $r$ is odd.
\end{lemma}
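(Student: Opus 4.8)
The plan is to show that the two graphs have the same vertex set and the same edge set by unpacking Definition~\ref{defn:pqrsu} in the special case $p=3$, $s=2$. First I would record what $G(3q;r,2,u)$ actually is. Here $s=2$ is the unique even divisor of $p-1 = 2$, so $H(3,2) = \mathbb{Z}_3^\ast = \langle a \rangle$ with $a = -1 = 2$ in $\mathbb{Z}_3$; thus the exponents $l$ in Definition~\ref{defn:pqrsu} range over $\{0,1\}$ (equivalently $\mathbb{Z}_2$), with $a^0 = 1$ corresponding to $j-i=1$ and $a^1 = -1$ corresponding to $j-i = -1 = 2$ in $\mathbb{Z}_3$. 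I must also pin down $t$: the requirement is $t^{s/2} = t^1 \in -H(q,r)$, i.e. $t \in -H(q,r)$, and $u = \lcm(r, o(t))$; by the isomorphism-invariance of $G(pq;r,s,u)$ noted after Definition~\ref{defn:pqrsu} I am free to pick any such $t$ with the prescribed $u$, and the natural choice is $t = -1$ (which lies in $-H(q,r)$ since $1 \in H(q,r)$). Then $o(t) = o(-1)$, which is $1$ if $-1 \in H(q,r)$ and $2$ otherwise; so I need to argue $u = \lcm(r, o(-1))$ equals $r$ when $r$ is even and $2r$ when $r$ is odd. Since $H(q,r)$ is the unique subgroup of the cyclic group $\mathbb{Z}_q^\ast$ of order $r$, and $q-1$ is even, $-1$ (the unique element of order $2$ in $\mathbb{Z}_q^\ast$) lies in $H(q,r)$ exactly when $r$ is even; hence $o(-1) = 1$ and $u = r$ when $r$ is even, while $o(-1)=2$ and $u = \lcm(r,2) = 2r$ when $r$ is odd. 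This matches the statement, and it also confirms $t=-1$ is a legitimate choice of parameter for the value of $u$ asserted.

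Next I would compare adjacency. With $t=-1$, two vertices $(i,x), (j,y) \in \mathbb{Z}_3 \times \mathbb{Z}_q$ are adjacent in $G(3q;r,2,u)$ iff there is $l \in \{0,1\}$ with $j - i = a^l$ and $y-x \in t^l H(q,r) = (-1)^l H(q,r)$. Now $t^l H(q,r) = H(q,r)$ for $l=0$, and $t^1 H(q,r) = -H(q,r)$ for $l=1$; but since $H(q,r) = -H(q,r)$ precisely when $r$ is even, I should be slightly careful here. The cleanest route is to observe that $-H(q,r) = H(q,u) $ when $u = 2r$ is odd$\cdot$... — actually the adjacency condition of $G(3q,r)$ (Definition~\ref{defn:3qr}) uses $H(q,r)$ and is \emph{symmetric}: $\{(i,x),(i+1,y)\}$ is an edge iff $y - x \in H(q,r)$, and replacing the edge's other description $\{(i+1,y),(i,x)\}$ forces $x - y \in H(q,r)$, i.e. $H(q,r) = -H(q,r)$ — which is automatic here because $G(3q,r)$ is an undirected graph, so in fact $H(q,r) \cup (-H(q,r))$ is the relevant symmetric set. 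So the real content is: the edge relation of $G(3q,r)$ joins $(i,x)$ to $(j,y)$ iff $j - i \in \{1,-1\}$ in $\mathbb{Z}_3$ and $y-x \in H(q,r) \cup (-H(q,r))$. I would then check that this is exactly the same as the $G(3q;r,2,u)$ relation: $l=0$ gives $j-i=1$, $y-x\in H(q,r)$; $l=1$ gives $j-i=-1$, $y-x \in -H(q,r)$; taking the union over $l \in \{0,1\}$ and using that the graph is undirected (so $(i,x)\sim(j,y)$ iff $(j,y)\sim(i,x)$) produces precisely the pairs with $j-i \in\{1,-1\}$ and $y-x \in H(q,r)\cup(-H(q,r))$. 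Hence the edge sets coincide.

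I expect the main (really the only) obstacle to be bookkeeping around the symmetrization: making sure that the directed-looking condition ``$j-i = a^l$ and $y-x \in t^l H(q,r)$'' in Definition~\ref{defn:pqrsu} genuinely yields an undirected graph and that its symmetrization matches the manifestly symmetric Definition~\ref{defn:3qr}, in both parities of $r$. Concretely, when $r$ is even, $H(q,r) = -H(q,r)$, so both $l=0$ and $l=1$ contribute the same ``$\mathbb{Z}_q$-part'' and the identification is immediate; when $r$ is odd, $H(q,r)$ and $-H(q,r) = H(q,2r)\setminus H(q,r)$ are disjoint, the two values of $l$ contribute complementary cosets, and one checks the $l \leftrightarrow 1-l$ swap (together with $i \leftrightarrow i+1$) realizes the edge reversal, so the relation is symmetric and equals that of $G(3q,r)$. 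Once these coset computations are in place, the equality $G(3q,r) = G(3q;r,2,u)$ (as labelled graphs on $\mathbb{Z}_3\times\mathbb{Z}_q$) follows directly, and I would close the proof with the $\square$.
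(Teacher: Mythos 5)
Your overall strategy (specialize Definition~\ref{defn:pqrsu} to $p=3$, $s=2$, compute $u$ from the parity of $r$, then match edge sets according to the parity of $l$) is the same as the paper's, and your value of $u$ comes out right; but note that $o(-1)=2$ in $\ZZZ_q^*$ for every odd prime $q$, irrespective of whether $-1\in H(q,r)$ --- the order of an element does not depend on a subgroup --- so the correct statement is simply $u=\lcm(r,2)$, which equals $r$ for $r$ even and $2r$ for $r$ odd. The paper works with an arbitrary admissible $t$ (showing $o(t)\mid r$ when $r$ is even and $o(t)$ even when $r$ is odd); since $t^lH(q,r)$ equals $H(q,r)$ or $-H(q,r)$ according only to the parity of $l$, the edge set is independent of the choice of $t$, which is what justifies the claimed \emph{equality} of labelled graphs. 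Your appeal to isomorphism-invariance in order to fix $t=-1$ only yields an isomorphism unless you add that observation.

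The genuine gap is in your description of the edge set of $G(3q,r)$. You assert that $(i,x)\sim(j,y)$ iff $j-i\in\{1,-1\}$ and $y-x\in H(q,r)\cup(-H(q,r))$; this is false when $r$ is odd. In Definition~\ref{defn:3qr} the pair $\{(i,x),(i+1,y)\}$ is an edge iff $y-x\in H(q,r)$, and since $i+2\ne i$ in $\ZZZ_3$ this is the only way to parse that unordered pair, so no identity $H(q,r)=-H(q,r)$ is forced and no symmetrization to $H(q,r)\cup(-H(q,r))$ is legitimate. The correct reformulation keeps the sign of $j-i$ paired with the coset of $y-x$: $(i,x)\sim(j,y)$ iff either $j-i=1$ and $y-x\in H(q,r)$, or $j-i=-1$ and $y-x\in -H(q,r)$. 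Your ``union'' description admits, for example, $j-i=1$ with $y-x\in -H(q,r)\setminus H(q,r)$, which for odd $r$ is an edge of neither $G(3q,r)$ nor $G(3q;r,2,2r)$; so ``taking the union over $l\in\{0,1\}$'' does not produce your set, and the edge-set comparison as you have set it up fails precisely in the odd case, which is the case carrying the content of the lemma. The paper avoids this by arguing directly: for $l$ odd it rewrites the $G(3q;r,2,u)$-condition as $i-j\equiv 1\pmod 3$ and $x-y\in H(q,r)$ and then applies Definition~\ref{defn:3qr} verbatim. Your closing remark about the $l\leftrightarrow 1-l$ swap realizing edge reversal points at the right pairing, but it does not repair the incorrect characterization of $E(G(3q,r))$ on which your comparison rests.
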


\begin{proof}
We use the notation in Definitions \ref{defn:3qr} and \ref{defn:pqrsu}. For $G(3q;r,2,u)$, we have $p = 3$, $s=2$ and $H(p, s) = \la -1 \ra = \ZZZ_3^*$. Let $t \in \ZZZ_q^*$ be such that $t^{s/2} = t \in -H(q,r)$. Then $t^2\in H(q,r)$.
 
If $r$ is even, then $-H(q,r)=H(q,r)$ and so $\langle t \rangle\leq H(q,r)$. Hence $o(t)$ divides $r$ and $u=\lcm(o(t),r)=r$. 

If $r$ is odd, then $-H(q,r) \neq H(q,r)$. Thus $t^k\in H(q,r)$ for even $k$ and $t^k \in -H(q,r)$ for odd $k$. Hence $o(t)$ is even and $u=\lcm(o(t),r)=2r$.

Note that both $G(3q,r)$ and $G(3q;r,2,u)$ are defined on the vertex set $\ZZZ_3 \times \ZZZ_q$.  
Let $(i,x), (j,y) \in \ZZZ_3 \times \ZZZ_q$. If these two vertices are adjacent in $G(3q,r)$, then by Definition \ref{defn:3qr}, $j -i \equiv 1 \equiv (-1)^2\mod{3}$ and $y-x\in H(q,r)=t^2H(q,r)$. Since $H(3, 2) = \langle -1\rangle$, by Definition \ref{defn:pqrsu}, $(i,x)$ and $(j,y)$ are also adjacent in $G(3q;r,2,u)$.
 
Now assume that $(i,x)$ and $(j,y)$ are adjacent in $G(3q;r,2,u)$. Then by Definition \ref{defn:pqrsu}, there exists an integer $l$ such that $j -i \equiv (-1)^l \mod{3}$ and $y-x\in t^l H(q,r)$. If $l$ is even, then $t^l\in H(q,r)$, $j-i \equiv 1 \mod{3}$ and $y-x\in H(q,r)$, implying that $(i,x)$ and $(j,y)$ are adjacent in $G(3q,r)$. If $l$ is odd, then $t^l\in -H(q,r)$, $i-j\equiv 1\mod{3}$ and $x-y\in H(q,r)$, again implying that $(i,x)$ and $(j,y)$ are adjacent in $G(3q,r)$. Therefore, $G(3q,r) = G(3q;r,2,u)$.  
\qed
\end{proof}

Note that in $G(pq;r,s,u)$ the integer $u$ must be a divisor of $q-1$, because in Definition \ref{defn:pqrsu} $q-1$ is a common multiple of $r$ and $o(t)$ and $u = \lcm(o(t),r)$. Thus, for a given $G(pq;r,s,u)$, the graph $G(q,u)$ is well-defined. The next lemma connects $G(pq;r,s,u)$ and $G(p,s)\times G(q,u)$. 

\begin{lemma}
\label{lem:pqrsu cat prod}
$G(p,s)\times G(q,u)$ is symmetric, and $G(pq;r,s,u)$ is isomorphic to a spanning subgraph of $G(p,s)\times G(q,u)$. Moreover, $G(pq;r,s,u) \cong G(p,s)\times G(q,u)$ if and only if the element $t \in \ZZZ_q^*$ in the definition of $G(pq;r,s,u)$ belongs to $H(q,r)$.
\end{lemma}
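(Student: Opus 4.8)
The plan is to work directly with the vertex set $\ZZZ_p \times \ZZZ_q$ common to both graphs and compare adjacency. Recall $H(p,s) = \la a \ra$ and that $u = \lcm(r, o(t))$, so $H(q,u) = \la t, h \ra$ where $h$ generates $H(q,r)$; in particular $t \in H(q,u)$ and $H(q,r) \le H(q,u)$. First I would establish that $G(p,s) \times G(q,u)$ is symmetric: both factors are symmetric (since $s$ is an even divisor of $p-1$ and $u$ is a divisor of $q-1$ with $u$ even because $r \mid u$ — or more simply because $G(p,s), G(q,u)$ are connected symmetric graphs of prime order), so Lemma~\ref{lem:trans} applies. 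Next I would check the containment: if $(i,x)$ and $(j,y)$ are adjacent in $G(pq;r,s,u)$, then there is $l$ with $j-i = a^l \in H(p,s)$ and $y-x \in t^l H(q,r) \subseteq H(q,u)$; hence $(i,x)$ is adjacent to $(j,y)$ in $G(p,s) \times G(q,u)$. Both graphs have vertex set $\ZZZ_p \times \ZZZ_q$, so $G(pq;r,s,u)$ is a spanning subgraph.

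For the "moreover" part, the easy direction is: if $t \in H(q,r)$, then $o(t) \mid r$, so $u = r$, and I claim $t^l H(q,r) = H(q,r)$ for every $l$, so the adjacency condition for $G(pq;r,s,u)$ becomes simply $j - i \in H(p,s)$ and $y - x \in H(q,r) = H(q,u)$, which is exactly the adjacency condition of $G(p,s) \times G(q,u)$; thus the two graphs coincide. For the converse, suppose $G(pq;r,s,u) \cong G(p,s) \times G(q,u)$. Since $G(pq;r,s,u)$ is a spanning subgraph of $G(p,s)\times G(q,u)$ and both are vertex-transitive (hence regular) of the same order, an isomorphism forces them to have the same valency: $sr = \val(G(pq;r,s,u)) = \val(G(p,s)\times G(q,u)) = s \cdot u$, using that $\val(G(p,s)) = s$, $\val(G(q,u)) = u$ and the categorical product of $d_1$-regular and $d_2$-regular graphs is $d_1 d_2$-regular. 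Therefore $r = u = \lcm(r, o(t))$, which forces $o(t) \mid r$, i.e. $t \in H(q,r)$.

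The step I expect to require the most care is the converse direction. Two subtleties: (i) one must be sure that an abstract isomorphism of the two graphs — not merely a subgraph inclusion combined with order equality — yields equality of valencies; this is immediate since isomorphic graphs have the same valency and both graphs are regular, so no delicate combinatorial argument is needed, only the explicit valency formulas $\val(G(pq;r,s,u)) = sr$ (from \cite[Theorem 3.5]{MR1223702}) and $\val(G(p,s)\times G(q,u)) = \val(G(p,s))\cdot\val(G(q,u)) = s u$. (ii) One should double-check the reduction $u = r \iff o(t) \mid r \iff t \in H(q,r)$, which is just the definition of $H(q,r)$ as the unique subgroup of order $r$ of the cyclic group $\ZZZ_q^*$: an element lies in it precisely when its order divides $r$. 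Assembling these observations completes the proof, so I would not anticipate any genuine obstacle beyond bookkeeping.
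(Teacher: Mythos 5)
Your argument has one genuine gap, in the very first step. To apply Lemma~\ref{lem:trans} you need $G(q,u)$ to be symmetric, which (by Chao's theorem, as quoted after Definition~\ref{defn:pr}) requires $u$ to be \emph{even} --- indeed, if $u$ were odd then $H(q,u)\neq -H(q,u)$ and $G(q,u)$ would not even be a well-defined undirected circulant. Neither of your two justifications establishes this. The claim ``$u$ is even because $r\mid u$'' fails because Definition~\ref{defn:pqrsu} only requires $r$ to be a divisor of $q-1$, not an even divisor; the case $r$ odd genuinely occurs (see Lemma~\ref{lem:3qr}, where $r$ odd forces $u=2r$). The alternative ``more simply because $G(q,u)$ is a connected symmetric graph of prime order'' is circular: the symmetry of $G(q,u)$ is exactly what is at stake. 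The paper closes this gap using the defining condition $t^{s/2}\in -H(q,r)$: writing $w:=-t^{s/2}\in H(q,r)$ and letting $v$ be its inverse in $H(q,r)$, one gets $-1=t^{s/2}v\in\langle t\rangle H(q,r)=H(q,u)$, and since $-1$ is an involution in $\ZZZ_q^*$, the order $u$ of $H(q,u)$ is even. You need some such argument; without it the first assertion of the lemma is unproved.

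The remainder of your proposal is correct. The spanning-subgraph containment and the forward direction of the ``moreover'' statement follow the paper's proof essentially verbatim. Your converse is a genuinely different and cleaner route: the paper compares edge sets directly (assuming every edge of $G(p,s)\times G(q,u)$ is an edge of $G(pq;r,s,u)$ and specialising to the vertex $(0,0)$ with $l=0$ to force $u=r$), which strictly speaking only treats equality of the two graphs rather than abstract isomorphism; your valency count $sr=su\Rightarrow r=u=\lcm(r,o(t))\Rightarrow o(t)\mid r\Rightarrow t\in H(q,r)$ handles an arbitrary isomorphism directly and avoids that (minor, repairable) mismatch. Fix the evenness of $u$ and the proof is complete.
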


\begin{proof}
Denote $\Gamma := G(pq;r,s,u)$ and $\Psi := G(p,s)\times G(q,u)$. The definition of $\Ga$ as given in Definition \ref{defn:pqrsu} requires $s$ to be even so that $G(p,s)$ is symmetric. We now prove that $u$ is even and so $G(q,u)$ is symmetric as well. From this and Lemma \ref{lem:trans} we then obtain that $\Psi$ is symmetric. 

As in Definition \ref{defn:pqrsu} let $H(p,s)=\langle a \rangle$ and $H(q,r)=\langle c\rangle$, where $a \in\mathbb{Z}_{p}^\ast$ and $c \in\mathbb{Z}_{q}^\ast$. Let $\omega$ be a primitive element of $\ZZZ^*_q$ so that we can take $c = \omega^{(q-1)/r}$. The definition of $\Ga$ involves an element $t \in \ZZZ_q^*$ such that $w := -t^{s/2} \in H(q,r)$. Let $k := o(t)$ be the order of $t$ in $\ZZZ_q^*$ and let $u := \lcm(k,r)$. Then $\langle t, c\rangle$ is the unique subgroup of $\mathbb{Z}_q^\ast$ with order $u$. Hence $H(q,r) \le \langle t, c\rangle=H(q,u) = \la \omega^{(q-1)/u} \ra$. Thus $t^l H(q,r)\subseteq H(q,u)$ for any integer $l$, or equivalently $H(q, u) = \cup_{l=1}^{k} t^l H(q,r)$.   
Let $v$ be the inverse element of $w$ in $H(q,r)$. Then $vw = 1$ in $\ZZZ_q^*$ and $H(q,u)$ contains the element $t^{s/2}v = -1$. Since $-1$ is an involution in $\mathbb{Z}_q^\ast$, it follows that the order $u$ of $H(q,u)$ must be even. Therefore, $\Psi$ is symmetric by our discussion in the previous paragraph. 

We may take both $\Ga$ and $\Psi$ as defined on the same vertex set $\ZZZ_p \times \ZZZ_q$. 
Suppose that $(i,x), (j,y) \in \ZZZ_p \times \ZZZ_q$ are adjacent in $\Ga$. Then $j-i=a^l$ and $y-x\in t^lH(q,r)$ for some integer $l$. Since $H(p,s)=\langle a \rangle$ and $t^lH(q,r)\subseteq H(q,u)$ as mentioned above, this means that $(i,x)$ and $(j,y)$ are adjacent in $\Psi$. Thus $\Ga$ is a spanning subgraph of $\Psi$.  
 
It remains to show that $t\in H(q,r)$ if and only if every edge of $\Psi$ is an edge of $\Gamma$. Suppose first that $t\in H(q,r)$. Then $u=r$ and $H(q,u) = H(q,r)$. If $(i,x), (j,y) \in \ZZZ_p \times \ZZZ_q$ are adjacent in $\Psi$, then $j-i\in H(p,s)$ and $y-x \in H(q,u)$. That is, $j-i=a^l\textnormal{ and }y-x\in t^lH(q,r)$ for some integer $l$, and so $(i,x)$ and $(j,y)$ are adjacent in $\Gamma$. 

Suppose conversely that every edge of $\Psi$ is an edge of $\Gamma$. Then for any $i, j \in \ZZZ_p$ and $x, y \in \ZZZ_q$ such that $j-i \in H(p, s)$ and $y-x \in H(q, u)$, we have $j-i = a^l$ and $y-x \in t^l H(q, r)$ for some integer $l$. 
In particular, for $(i, x) = (0, 0)$, this means that for any integers $l$ and $d$, there exists an integer $e$ such that $\omega^{d(q-1)/u} = t^l \omega^{e(q-1)/r}$. Taking $l=0$, this implies $u=r$, $H(q,r)=H(q,u)$, and so $t\in H(q,r)$. Therefore, $\Gamma = \Psi$ if and only if $t\in H(q,r)$.  
\qed
\end{proof}

Combining Lemma \ref{lem:pq circ hom} and the second part of Lemma \ref{lem:pqrsu cat prod}, we obtain:

\begin{theorem}
\label{thm:pqrsu cat}
Let $\Ga = G(pq;r,s,u)$ and suppose that $t\in H(q,r)$. If $G(p,s) \rightarrow G(q,u)$, then $\Ga^* \cong G(p,s)$; if $G(q,u) \rightarrow G(p,s)$, then $\Ga^* \cong G(q,u)$; in the remaining case $\Ga$ is a core.  
\end{theorem}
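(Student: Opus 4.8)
The statement to prove, Theorem~\ref{thm:pqrsu cat}, follows by combining two earlier results, so the plan is essentially to assemble them carefully. First I would invoke the second part of Lemma~\ref{lem:pqrsu cat prod}: since we are assuming $t \in H(q,r)$, we have $G(pq;r,s,u) \cong G(p,s) \times G(q,u)$, so that $\Ga$ is isomorphic to the categorical product of the two circulant graphs $G(p,s)$ and $G(q,u)$. Here I should note that Lemma~\ref{lem:pqrsu cat prod} also guarantees that $u$ is even, so $G(q,u)$ is symmetric (as is $G(p,s)$, since $s$ is even by the hypotheses in Definition~\ref{defn:pqrsu}); in particular both $G(p,s)$ and $G(q,u)$ are symmetric graphs of prime order, hence cores by Corollary~\ref{coro:prime order}.

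Next I would apply Lemma~\ref{lem:pq circ hom}, taking the roles of $G(p,s)$ and $G(q,r)$ in that lemma to be played by $G(p,s)$ and $G(q,u)$ respectively. This is legitimate because $s$ is an even divisor of $p-1$ and $u$ is an even divisor of $q-1$ (that $u \mid q-1$ was observed in the remark preceding Lemma~\ref{lem:pqrsu cat prod}, and that $u$ is even was just established). Lemma~\ref{lem:pq circ hom} then states that $\Ga \cong G(p,s)\times G(q,u)$ fails to be a core precisely when either $G(p,s) \to G(q,u)$ — in which case $\Ga^* \cong G(p,s)$ — or $G(q,u) \to G(p,s)$ — in which case $\Ga^* \cong G(q,u)$. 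Consequently, if neither homomorphism exists, $\Ga$ is a core. This gives exactly the three cases in the statement.

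There is essentially no obstacle here: the theorem is a formal corollary of Lemmas~\ref{lem:pq circ hom} and~\ref{lem:pqrsu cat prod}. The only point requiring a word of care is verifying that the hypotheses of Lemma~\ref{lem:pq circ hom} (namely that the two factors are circulants of the form $G(\cdot,\text{even divisor})$) are met after the identification of $\Ga$ with $G(p,s)\times G(q,u)$; this is handled by the parity and divisibility facts for $u$ recorded above. So the proof is short:

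\proof
By hypothesis $t \in H(q,r)$, so Lemma~\ref{lem:pqrsu cat prod} gives $\Ga = G(pq;r,s,u) \cong G(p,s)\times G(q,u)$, and moreover the proof of that lemma shows that $u$ is even; since $s$ is even by Definition~\ref{defn:pqrsu} and $u$ is a divisor of $q-1$ (as noted before Lemma~\ref{lem:pqrsu cat prod}), both $G(p,s)$ and $G(q,u)$ are symmetric. Applying Lemma~\ref{lem:pq circ hom} with $G(p,s)$ and $G(q,u)$ in place of $G(p,s)$ and $G(q,r)$, we conclude: if $G(p,s)\rightarrow G(q,u)$ then $\Ga^*\cong G(p,s)$; if $G(q,u)\rightarrow G(p,s)$ then $\Ga^*\cong G(q,u)$; and if neither homomorphism exists then $\Ga$ is a core.
\qed
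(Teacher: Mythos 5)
Your proposal is correct and follows exactly the route the paper takes: the paper states this theorem as an immediate consequence of Lemma~\ref{lem:pq circ hom} together with the second part of Lemma~\ref{lem:pqrsu cat prod}, which is precisely your argument. Your extra care in verifying that $u$ is an even divisor of $q-1$ (so that Lemma~\ref{lem:pq circ hom} applies with $G(q,u)$ in place of $G(q,r)$) is a welcome addition that the paper leaves implicit.
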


In the next subsection we determine the core of $G(pq;r,s,u)$ when $t\notin H(q,r)$. As it turns out, this is a more challenging task.

\subsection{The core of $G(pq;r,s,u)$ when $t\notin H(q,r)$}
\label{subsec:cand}

\textit{
Throughout this subsection we let $\Gamma:=G(pq;r,s,u)$ and assume $t\notin H(q,r)$. As in Definition \ref{defn:pqrsu},  let $a \in\mathbb{Z}_{p}^\ast$ and $c\in\mathbb{Z}_{q}^\ast$ be such that $H(p,s)=\langle a\rangle$ and $H(q,r)=\langle c\rangle$. Let $l$ and $m$ be fixed positive integers. Define the permutation $\g$ on the vertex set $\ZZZ_p \times \ZZZ_q$ of $\Ga$ by
$$
\gamma: (i, x) \mapsto (i+a^l, x + t^lc^m),\;\; (i,x) \in \ZZZ_p \times \ZZZ_q.
$$}

It is clear that $\g \in \Aut(\Ga)$ and $(i, x)$ and $\g((i, x))$ are adjacent in $\Ga$.

The purpose of this subsection is to prove the following two results. The first asserts that $\Ga^*$ is isomorphic to $\Ga$, $G(p,s)$ or $G(q,u)$, and the second tells us exactly when each of these cases occurs.  

\begin{theorem}
\label{thm:pqrsu core}
Suppose that $\Ga$ is not a core. Then exactly one of the following occurs:
\begin{itemize}
\item[\rm (a)] $\Gamma^\ast\cong G(p,s)$ and the fibres of any retraction $\phi: \Ga \rightarrow \Ga^*$ are the sets $\{(i, x): x \in \ZZZ_q\}$, $i \in \ZZZ_p$;
\item[\rm (b)] $\Gamma^\ast\cong G(q,u)$ and the fibres of any retraction $\phi: \Ga \rightarrow \Ga^*$ are the sets $\{(i, x): i \in \ZZZ_p\}$, $x \in \ZZZ_q$.
\end{itemize}
\end{theorem}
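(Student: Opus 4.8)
The plan is to leverage the structural results already established, especially Lemma~\ref{lem:pqrsu cat prod}, which identifies $\Ga = G(pq;r,s,u)$ as a spanning subgraph of the symmetric graph $\Psi := G(p,s)\times G(q,u)$, together with the two projection homomorphisms $\Ga \to G(p,s)$ and $\Ga \to G(q,u)$ (these are inherited from the projections on $\Psi$ described in Lemma~\ref{lem:prop2.1}(a), and remain homomorphisms on the spanning subgraph $\Ga$). Since $\Ga$ has order $pq$ with $p<q$ primes and $\Ga$ is symmetric (Theorem~\ref{thm:circu} and \cite[Theorem 3.5]{MR1223702}), Theorem~\ref{thm:sym cores} gives that $\Ga^*$ is symmetric and Theorem~\ref{thm:order vt} forces $|V(\Ga^*)| \in \{1, p, q, pq\}$. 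If $\Ga$ is not a core we exclude $|V(\Ga^*)|=1$ (since $\Ga$ has edges) and $|V(\Ga^*)|=pq$ (a core of full order would mean $\Ga$ is a core), so $|V(\Ga^*)| \in \{p, q\}$. In either case $\Ga^*$ has prime order, hence $\Aut(\Ga^*)$ is primitive, and $\Ga^*$ is a vertex-transitive core, so by Theorems~\ref{thm:prim cores proj} and~\ref{thm:retr fac}, since $\Ga^*$ is a retract of the categorical product $\Psi = G(p,s)\times G(q,u)$ of connected graphs, it must be a retract of one of the two factors; as both $G(p,s)$ and $G(q,u)$ are cores (Corollary~\ref{coro:prime order}), we conclude $\Ga^*\cong G(p,s)$ or $\Ga^*\cong G(q,u)$.

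The delicate part, and what distinguishes this from Lemma~\ref{lem:pq circ hom}, is that $\Ga$ is only a \emph{spanning subgraph} of $\Psi$, not equal to it (because $t\notin H(q,r)$), so one cannot directly invoke the product machinery to retract $\Ga$ onto a factor; the retraction $\phi:\Ga\to\Ga^*$ is only guaranteed to exist abstractly. So the real content is to identify the fibres of $\phi$. First I would treat case (a), $\Ga^*\cong G(p,s)$: here the retraction $\phi:\Ga\to\Ga^*$ has $p$ fibres, each of size $q$ by Theorem~\ref{thm:order vt}, and each fibre is an independent set of $\Ga$. The goal is to show these fibres are exactly the ``vertical'' sets $V_i := \{(i,x): x\in\ZZZ_q\}$ for $i\in\ZZZ_p$. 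The key observation is that composing $\phi$ with an isomorphism $\Ga^*\to G(p,s)$ gives a homomorphism $\psi:\Ga\to G(p,s)$ whose restriction to the copy of $G(p,s)$ sitting as $\Ga^*$ inside $\Ga$ is a bijection. One then argues that $\psi$ must factor through the projection $\pi_1:(i,x)\mapsto i$. To see this, I would use the automorphism $\g$ defined in the excerpt: $\g$ translates the first coordinate by $a^l$ and the second by $t^lc^m$, and $(i,x)$ is adjacent to $\g(i,x)$ in $\Ga$; by iterating $\g$ with varying $m$ (keeping $l$ fixed) one shows that for each fixed $i$ and varying $x$, the images $\psi(i,x)$ are forced to agree — essentially because within a fixed first-coordinate slice, the ``horizontal'' adjacency structure of $\Ga$ (adjacencies $(i,x)\sim(i+a^l, y)$ with $y-x\in t^lH(q,r)$) is rich enough that the $q$ vertices $\{(i,x):x\in\ZZZ_q\}$ cannot be distinguished by any homomorphism to a graph of order $p<q$ once we know the fibres must be independent sets of size $q$. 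More precisely: an independent set of size $q$ in $\Ga$ that meets some slice $V_i$ must, by a counting argument using $\a(\Ga)$ and the No-Homomorphism Lemma (Lemma~\ref{lem:nohom}) applied via $\psi$, be contained in a single slice, and hence equal to $V_i$.

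For the counting step I expect to compute or bound $\a(\Ga)$: since $\Ga$ is vertex-transitive of order $pq$ and $\Ga^*\cong G(p,s)$ has $\a(G(p,s)) \cdot p / p = \a(G(p,s))/p$ as its independence ratio... wait, more carefully, equation~(\ref{eq:nohom1}) gives $\a(\Ga)/(pq) = \a(\Ga^*)/p$, so $\a(\Ga) = q\cdot\a(G(p,s))$; since $\a(G(p,s))\ge 1$ always and the slices $V_i$ are independent of size $q$, and any independent set of $\Ga$ of size $q\cdot\a(G(p,s))$ projects under $\pi_1$ to an independent set of $G(p,s)$ with all fibres of size $q$, one deduces that the maximum independent sets — and in particular the fibres of $\phi$, which have size exactly $q = q\cdot 1$ only when $\a(G(p,s))=1$, i.e. when $G(p,s)$ is complete — need a separate sub-argument. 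The cleanest route is probably: a fibre $F$ of $\phi$ has size $q$ and is independent; its projection $\pi_1(F)\subseteq\ZZZ_p$ is an independent set of $G(p,s)$; if $|\pi_1(F)|\ge 2$ then since $\gcd(p,q)=1$ a pigeonhole/GCD argument on how $F$ distributes over the slices, combined with the transitivity afforded by $\g$ and the connectedness of the horizontal structure, produces an edge inside $F$, contradiction; hence $|\pi_1(F)|=1$ and $F=V_i$. This GCD-plus-connectivity argument is the main obstacle and where the hypothesis $t\notin H(q,r)$ and the precise form of $\g$ (with the nontrivial second coordinate $t^lc^m$ genuinely involving $t$) are used. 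Case (b), $\Ga^*\cong G(q,u)$, is symmetric to case (a) with the roles of the two coordinates swapped (now fibres have size $p$ and the analogous argument shows they are the horizontal slices $\{(i,x):i\in\ZZZ_p\}$), and finally ``exactly one'' follows because $G(p,s)$ and $G(q,u)$ have different orders $p\neq q$, so $\Ga^*$ cannot be isomorphic to both.
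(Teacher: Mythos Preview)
Your proposal has two genuine gaps that prevent it from going through.

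First, in the opening paragraph you assert that $\Gamma^*$ is a retract of $\Psi = G(p,s)\times G(q,u)$ and invoke Theorems~\ref{thm:prim cores proj} and~\ref{thm:retr fac}. But $\Gamma^*$ is a retract of $\Gamma$, and $\Gamma$ is only a \emph{spanning subgraph} of $\Psi$ (as you yourself note). A retraction $\phi:\Gamma\to\Gamma^*$ is defined on $V(\Psi)=V(\Gamma)$, yet $\Psi$ has strictly more edges than $\Gamma$ when $t\notin H(q,r)$, and there is no reason those extra edges map to edges of $\Gamma^*$ under $\phi$. So $\Psi\to\Gamma^*$ is not established, and the projectivity machinery does not apply. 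Second, your ``cleanest route'' claims that for a fibre $F$ of $\phi$, the projection $\pi_1(F)\subseteq\ZZZ_p$ is independent in $G(p,s)$. This is false: if $j-i=a^l$ but $y-x\notin t^lH(q,r)$, then $(i,x)$ and $(j,y)$ are non-adjacent in $\Gamma$ and may both lie in $F$, while $i,j$ are adjacent in $G(p,s)$. The subsequent pigeonhole argument then collapses.

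What actually drives the paper's proof is a structural fact you do not invoke: when $\Gamma^*$ is not complete, $\Gamma^*\cong G(n,d)$ with $d<n-1$ and $\Aut(\Gamma^*)\cong\ZZZ_n\rtimes H(n,d)$ is a \emph{Frobenius group} on $V(\Gamma^*)$, so its only fixed-point-free elements are translations $k^*\mapsto(k+b)^*$. Lemma~\ref{lem:pqrsu semi} shows $(\phi\gamma)|_{\Gamma^*}$ is fixed-point-free, hence a translation; an induction (Lemma~\ref{lem:pqrsu ind}) gives $(\phi\gamma^j)|_{\Gamma^*}(k^*)=(k+jb)^*$ for all $j\ge1$. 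Taking $j$ a multiple of $p$ (when $n=p$) makes this the identity, so $\gamma^{jp}$ preserves every fibre; since $\gamma^{jp}(i,x)=(i,x+jpt^lc^m)$ and $pt^lc^m$ generates $\ZZZ_q$, the whole vertical slice $\{(i,y):y\in\ZZZ_q\}$ lies in one fibre, forcing equality by cardinality. Only \emph{after} the fibres are pinned down does the projection $\pi_1$ identify $\Gamma^*$ with $G(p,s)$. The complete-core case is handled separately (Case~1) precisely because the Frobenius argument is unavailable there. Your sketch gestures at iterating $\gamma$ but misses the Frobenius step that makes the iteration tractable, and the alternative counting routes you propose do not work.
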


\begin{theorem}
\label{thm:pqrsu}
\begin{itemize} 
\item[\rm (a)] $\Gamma^\ast \cong G(p,s)$ if and only if there exists a homomorphism $\eta: G(p,s)\rightarrow G(q,u)$ such that for every arc $(i,j)$ of $G(p,s)$, say, $j-i=a^l$ for some integer $l$, we have $\eta(j)-\eta(i)\in t^lH(q,r)$;
\item[\rm (b)] $\Gamma^\ast\cong G(q,u)$ if and only if there exists a homomorphism $\zeta:G(q,u)\rightarrow G(p,s)$ such that for every arc $(x,y)$ of $G(q,u)$, say, $y-x\in t^lH(q,r)$ for some integer $l$, we have $\zeta(y)-\zeta(x)=a^l$. 
\end{itemize}
\end{theorem}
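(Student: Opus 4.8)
The plan is to reduce the statement to the clean description of the retractions of $\Gamma$ supplied by Theorem \ref{thm:pqrsu core}, and then prove the two implications in each of (a) and (b) directly; the arguments for (b) are the mirror image of those for (a), with the roles of $G(p,s)$ and $G(q,u)$ interchanged and Theorem \ref{thm:pqrsu core}(b) used in place of Theorem \ref{thm:pqrsu core}(a). The key preliminary is a bookkeeping remark that makes the conditions in the statement transparent: since $t^{s/2}\in -H(q,r)$ we have $t^{s}=(t^{s/2})^{2}\in H(q,r)$, and since $a$ has order $s$ in $\ZZZ_p^\ast$ this implies that the coset $t^{l}H(q,r)$ depends only on the value $a^{l}$. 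Hence, given a map $\eta:\ZZZ_p\to\ZZZ_q$, the condition in (a) --- that $\eta$ be a homomorphism $G(p,s)\to G(q,u)$ with $\eta(j)-\eta(i)\in t^{l}H(q,r)$ whenever $(i,j)$ is an arc of $G(p,s)$ and $j-i=a^{l}$ --- is equivalent to the single requirement that the ``section'' $\iota_\eta:i\mapsto(i,\eta(i))$ send every arc of $G(p,s)$ to an arc of $\Gamma$; and since the first-coordinate projection of any edge of $\Gamma$ is an edge of $G(p,s)$, such an $\iota_\eta$ is automatically an isomorphism from $G(p,s)$ onto the induced subgraph $\Psi_\eta$ of $\Gamma$ on $\{(i,\eta(i)):i\in\ZZZ_p\}$. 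Dually, the condition on $\zeta:\ZZZ_q\to\ZZZ_p$ in (b) is equivalent to the requirement that $\kappa_\zeta:x\mapsto(\zeta(x),x)$ send every arc of $G(q,u)$ to an arc of $\Gamma$, and then $\kappa_\zeta$ embeds $G(q,u)$ onto the induced subgraph of $\Gamma$ on $\{(\zeta(x),x):x\in\ZZZ_q\}$.

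For the ``if'' direction of (a), given such an $\eta$ I would define $\phi:\Gamma\to\Psi_\eta$ by $\phi(i,x)=(i,\eta(i))$. It restricts to the identity on $\Psi_\eta$ by construction, and it is a homomorphism: if $(i,x)$ and $(j,y)$ are adjacent in $\Gamma$ then $j-i=a^{l}$ for some $l$, so $(i,j)$ is an arc of $G(p,s)$ and hence $(i,\eta(i))$ and $(j,\eta(j))$ are adjacent in $\Gamma$ by the reformulated condition on $\eta$. Thus $\Psi_\eta$ is a retract of $\Gamma$; since $\Psi_\eta\cong G(p,s)$ is a core by Corollary \ref{coro:prime order} and $\Gamma\leftrightarrow\Psi_\eta$ (via $\phi$ and the inclusion), Lemma \ref{lem:hom eq} gives $\Gamma^\ast\cong\Psi_\eta\cong G(p,s)$. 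The ``if'' direction of (b) is identical with $\phi(i,x)=(\zeta(x),x)$ and $\Psi_\zeta\cong G(q,u)$ a core.

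For the ``only if'' direction of (a), suppose $\Gamma^\ast\cong G(p,s)$. Then $\Gamma$ is not a core (it has $pq>p$ vertices), so by Theorem \ref{thm:pqrsu core}(a) every retraction $\phi:\Gamma\to\Gamma^\ast$ has fibres $\{(i,x):x\in\ZZZ_q\}$, $i\in\ZZZ_p$; since each such fibre meets $\Gamma^\ast$ in exactly one vertex, $V(\Gamma^\ast)=\{(i,\eta(i)):i\in\ZZZ_p\}$ for a well-defined $\eta:\ZZZ_p\to\ZZZ_q$, and $\phi(i,x)=(i,\eta(i))$. To see that $\eta$ works, take an arc $(i,j)$ of $G(p,s)$ with $j-i=a^{l}$ and pick any $z\in t^{l}H(q,r)$ (the coset is nonempty); then $(i,0)$ and $(j,z)$ are adjacent in $\Gamma$, so applying $\phi$ shows $(i,\eta(i))$ and $(j,\eta(j))$ are adjacent in $\Gamma$, forcing $\eta(j)-\eta(i)\in t^{l}H(q,r)\subseteq H(q,u)$ by the bookkeeping remark --- which gives simultaneously that $\eta$ is a homomorphism $G(p,s)\to G(q,u)$ and that it satisfies the stated condition. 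The ``only if'' direction of (b) is the mirror: Theorem \ref{thm:pqrsu core}(b) gives fibres $\{(i,x):i\in\ZZZ_p\}$, hence $V(\Gamma^\ast)=\{(\zeta(x),x):x\in\ZZZ_q\}$; for an arc $(x,y)$ of $G(q,u)$ with $y-x\in t^{l}H(q,r)$ the vertices $(0,x)$ and $(a^{l},y)$ are adjacent in $\Gamma$, so $(\zeta(x),x)$ and $(\zeta(y),y)$ are adjacent in $\Gamma$, i.e. $\zeta(y)-\zeta(x)=a^{l'}$ with $y-x\in t^{l'}H(q,r)$ for some $l'$ --- which is precisely the asserted condition for this arc (with $l'$ in the role of $l$). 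I do not anticipate a genuine obstacle here, since Theorem \ref{thm:pqrsu core} carries the structural weight; the one thing to handle carefully is the exponent bookkeeping --- that $t^{l}H(q,r)$ is determined by $a^{l}$ --- which is what makes the conditions in the statement well-posed and lets the two implications close up cleanly.
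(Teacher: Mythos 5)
Your proof is correct and follows essentially the same route as the paper: the sufficiency direction realises $G(p,s)$ (resp.\ $G(q,u)$) as a retract via the section $i\mapsto(i,\eta(i))$ (resp.\ $x\mapsto(\zeta(x),x)$), and the necessity direction extracts $\eta$ (resp.\ $\zeta$) from the fibre structure supplied by Theorem \ref{thm:pqrsu core}, with your direct construction of the retraction $\phi(i,x)=(i,\eta(i))$ and your use of the single adjacent pair $(i,0),(j,z)$ being mild streamlinings of the paper's detour through $\Gamma\leftrightarrow\Delta$ and the maps $\theta,\psi$. Your bookkeeping remark that $t^{s}\in H(q,r)$, so that the coset $t^{l}H(q,r)$ depends only on $a^{l}$, is exactly the fact the paper uses implicitly, and your existential reading of the condition in (b) is the one under which both implications close up.
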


To establish these results we need to prove three lemmas first. 

\begin{lemma}
\label{lem:pqrsu semi}
Let $\phi:\Gamma\rightarrow\Gamma^\ast$ be a retraction. Then $(\phi  \gamma^{j})\mid_{\Gamma^\ast} \in\Aut(\Gamma^\ast)$ for any integer $j \ge 1$, and $(\phi  \gamma)\mid_{\Gamma^\ast}$ does not fix any vertex of $\Gamma^\ast$. 
\end{lemma}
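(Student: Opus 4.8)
The plan is to exploit the facts that $\phi:\Gamma\to\Gamma^\ast$ is a retraction (so $\phi\mid_{\Gamma^\ast}=\mathrm{id}$ and $\phi$ is a graph homomorphism), that $\gamma\in\Aut(\Gamma)$, and that $\Gamma$ is vertex-transitive, so Theorem~\ref{thm:order vt} applies and all fibres of $\phi$ have the same size $k=pq/|V(\Gamma^\ast)|$. First I would observe that $\phi\gamma^j$ is an endomorphism of $\Gamma$ (composition of a homomorphism with an automorphism), hence its restriction $(\phi\gamma^j)\mid_{\Gamma^\ast}$ is a homomorphism from $\Gamma^\ast$ to $\Gamma^\ast$; since $\Gamma^\ast$ is a core, every such homomorphism is an automorphism (by the characterization of cores as graphs whose endomorphisms are all automorphisms, cited in the introduction). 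This already gives the first assertion $(\phi\gamma^j)\mid_{\Gamma^\ast}\in\Aut(\Gamma^\ast)$ for all $j\ge 1$.

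The heart of the lemma is the second assertion: $(\phi\gamma)\mid_{\Gamma^\ast}$ has no fixed point on $V(\Gamma^\ast)$. The key structural input is that, by construction, for every vertex $v=(i,x)$ of $\Gamma$, the pair $\{v,\gamma(v)\}$ is an edge of $\Gamma$ (this was noted right after the definition of $\gamma$). Suppose for contradiction that some $v^\ast\in V(\Gamma^\ast)$ is fixed by $(\phi\gamma)\mid_{\Gamma^\ast}$, i.e. $\phi(\gamma(v^\ast))=v^\ast$. Since $v^\ast\in V(\Gamma^\ast)$ and $\{v^\ast,\gamma(v^\ast)\}\in E(\Gamma)$, applying the homomorphism $\phi$ gives that $\phi(v^\ast)=v^\ast$ and $\phi(\gamma(v^\ast))$ are adjacent in $\Gamma^\ast$; but $\phi(\gamma(v^\ast))=v^\ast$, so $v^\ast$ would be adjacent to itself, contradicting that $\Gamma^\ast$ has no loops. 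Hence no fixed point exists.

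I expect the main obstacle to be making sure the fixed-point argument is airtight — in particular verifying that $\phi$ genuinely maps the edge $\{v^\ast,\gamma(v^\ast)\}$ to an edge (not collapsing it), which follows because homomorphisms send edges to edges and $\Gamma^\ast$ is loopless, so the two endpoints must have distinct images; and confirming that $v^\ast$ being in $V(\Gamma^\ast)$ means $\phi(v^\ast)=v^\ast$ since $\phi\mid_{\Gamma^\ast}$ is the identity. One subtlety worth a sentence: the statement quantifies $(\phi\gamma^j)\mid_{\Gamma^\ast}$ over $j\ge1$, and since $(\phi\gamma^j)=(\phi\gamma^j)$ is again of the form (homomorphism)$\circ$(automorphism), the same core-endomorphism argument applies verbatim; no separate work is needed for $j>1$. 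I would present the two parts in this order — automorphism claim first via the core property, then the no-fixed-point claim via the loopless-image-of-an-edge argument — since the second is the part that actually uses the specific form of $\gamma$.
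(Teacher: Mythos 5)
Your proposal is correct and follows essentially the same route as the paper: the first claim via the ``endomorphism of a core is an automorphism'' characterization, and the second via the fact that $v^\ast$ and $\gamma(v^\ast)$ are adjacent in $\Gamma$, so a homomorphism into the loopless graph $\Gamma^\ast$ cannot send them to the same vertex (the paper phrases this as the fibres of $\phi$ being independent sets, which is the same observation). No gaps.
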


\begin{proof}
Since $\g^j \in \Aut(\Ga)$ and $\phi: \Gamma\rightarrow\Gamma^\ast$ is a homomorphism, $(\phi  \gamma^j)\mid_{\Gamma^\ast}$ is an endomorphism of $\Ga^*$. Since $\Ga^*$ is a core, it follows that $(\phi  \gamma^j)\mid_{\Gamma^\ast} \in \Aut(\Gamma^\ast)$.   
 
It is clear that $\g$ fixes no vertex of $V(\Ga)$. Since $(i,x)$ and $\g(i,x) = (i+a^l,x+t^lc^m)$ are adjacent in $\Ga$, and since fibres of $\phi$ are independent sets of $\Ga$, we have $\phi(i,x) \ne \phi(\g(i,x)) = (\phi   \g)(i,x)$. Since $\phi\mid_{\Gamma^\ast}$ is the identity map from $V(\Ga^*)$ to itself, for $(i, x) \in V(\Ga^*)$ we have $\phi(i,x) = (i,x)$ and therefore $(\phi   \g)(i,x) \ne (i, x)$. In other words, $(\phi  \gamma)\mid_{\Gamma^\ast}$ fixes no vertex of $\Gamma^\ast$.  
\qed
\end{proof}

\begin{lemma}
\label{lem:pqrsu semireg}
Suppose that $\Gamma^* \ne \Ga$ and $\Gamma^\ast$ is not a complete graph. Then the following hold:
\begin{itemize}
\item[\rm (a)] 
$\Gamma^\ast \cong G(n, d)$, where $n = p$ or $q$ and $d$ is an even divisor of $n-1$. Moreover, we may identify $\Gamma^\ast$ with $G(n, d)$ by labelling bijectively the vertices of $\Ga^*$ by the elements of $\ZZZ_n$ in such a way that $h^*, k^* \in V(\Ga^*)$ are adjacent in $\Ga^*$ if and only if $k - h \in H(n, d)$, where $k^*$ denotes the unique vertex of $\Ga^*$ labelled by $k \in \ZZZ_n$.  
\item[\rm (b)] 
Under this identification, for any retraction $\phi:\Gamma\rightarrow\Gamma^\ast$  that fixes every vertex of $\Ga^*$, there exists $b = b(\Ga^*, \phi) \in \ZZZ_n$ such that $(\phi \gamma)\mid_{\Gamma^\ast}(k^*) = (k + b)^*$ (with addition in $\ZZZ_n$) for $k \in V(\Gamma^\ast)$.  
\end{itemize}
\end{lemma}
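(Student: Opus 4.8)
The plan for part (a) is to pin down $\Gamma^*$ from its order together with the transitivity it inherits. Since $\Gamma = G(pq;r,s,u)$ is connected and symmetric, Theorem~\ref{thm:sym cores} gives that $\Gamma^*$ is symmetric; moreover $\Gamma^*$ is connected, being the image of the connected graph $\Gamma$ under a retraction, and it has at least one edge, since the fibres of a retraction are independent and hence the two ends of an edge of $\Gamma$ lie in distinct fibres. By Theorem~\ref{thm:order vt}, $|V(\Gamma^*)|$ divides $pq$. The value $pq$ is impossible: $\Gamma^*$ is an induced subgraph of $\Gamma$, and the only induced subgraph of $\Gamma$ on all $pq$ vertices is $\Gamma$ itself, contradicting $\Gamma^* \ne \Gamma$. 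As $|V(\Gamma^*)| \ge 2$ as well, we get $|V(\Gamma^*)| = n$ with $n \in \{p,q\}$. In Definition~\ref{defn:pqrsu} the parameter $s$ is an even divisor of $p-1$, so $p$---and therefore $n$---is an odd prime. Now I would invoke \cite[Theorem 3]{MR0279000}: a connected symmetric graph of odd prime order $n$ is isomorphic to $G(n,d)$ for some even divisor $d$ of $n-1$, with $d<n-1$ here because $\Gamma^*$ is assumed not complete (recall $G(n,n-1)=K_n$). Fixing an isomorphism $\psi\colon \Gamma^* \to G(n,d)$ and labelling each vertex $v$ of $\Gamma^*$ by $\psi(v) \in \ZZZ_n$ yields exactly the asserted identification, under which (by conjugating by $\psi$) $\Aut(\Gamma^*)$ is identified with $\Aut(G(n,d))$ acting on $\ZZZ_n$.

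For part (b), fix a retraction $\phi\colon \Gamma \to \Gamma^*$; by definition $\phi$ fixes every vertex of $\Gamma^*$. By Lemma~\ref{lem:pqrsu semi}, $(\phi\gamma)\mid_{\Gamma^*}$ is an automorphism of $\Gamma^*$ that fixes no vertex. The plan is then to use the explicit description of $\Aut(G(n,d))$ recorded after Definition~\ref{defn:pr}: since $d < n-1$, $\Aut(G(n,d)) \cong \ZZZ_n \rtimes H(n,d) \le \AGL(1,n)$ is a Frobenius group on $\ZZZ_n$, so every automorphism acts as $k \mapsto \omega k + \beta$ for some $\omega \in H(n,d)$ and $\beta \in \ZZZ_n$. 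Such a map has a unique fixed point whenever $\omega \ne 1$, because $\omega-1$ is then invertible in the field $\ZZZ_n$, and it is the identity when $(\omega,\beta)=(1,0)$; hence the only fixed-point-free automorphisms are the nontrivial translations $k^* \mapsto (k+b)^*$ with $b \in \ZZZ_n \setminus\{0\}$. Transporting this conclusion through the identification from part (a), $(\phi\gamma)\mid_{\Gamma^*}$ must be such a translation, which supplies the required $b = b(\Gamma^*,\phi) \in \ZZZ_n$.

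Most of this argument is bookkeeping built on results already available: the classification of symmetric graphs of prime order from \cite{MR0279000}, the affine/Frobenius description of $\Aut(G(n,d))$, and Lemma~\ref{lem:pqrsu semi}. The step I would watch most carefully is the last one---making sure the Frobenius structure of $\Aut(G(n,d))$ is carried faithfully across the labelling $\psi$, so that a fixed-point-free automorphism of $\Gamma^*$ corresponds exactly to a pure translation of $\ZZZ_n$. I do not anticipate any genuine difficulty beyond this.
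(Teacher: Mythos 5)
Your proposal is correct and follows essentially the same route as the paper: Theorem~\ref{thm:sym cores} and Theorem~\ref{thm:order vt} to get a symmetric core of prime order $n\in\{p,q\}$, the classification of symmetric graphs of odd prime order to identify $\Gamma^*$ with $G(n,d)$, and then Lemma~\ref{lem:pqrsu semi} combined with the affine (Frobenius) description of $\Aut(G(n,d))$ to force the fixed-point-free automorphism $(\phi\gamma)\mid_{\Gamma^*}$ to be a translation. Your explicit checks that $\Gamma^*$ is connected and that $n$ is odd are small refinements the paper leaves implicit, but the substance is identical.
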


\begin{proof}
Since $\Ga$ is symmetric, by Theorem \ref{thm:sym cores}, $\Ga^*$ is symmetric. Denote $n := |V(\Ga^*)|$. Since $\Ga$ has order $pq$ and is not a core, by Theorem \ref{thm:order vt}, we have $n = p$ or $q$. Since $\Ga^*$ is symmetric of prime order, $\Gamma^\ast\cong G(n, d)$ for some even divisor $d$ of $n-1$. Thus we may  identify $\Ga^*$ with $G(n, d)$ in the way as described in (a). Since $\Gamma^\ast$ is not a complete graph, under this identification, $\Aut(\Gamma^\ast) \cong \mathbb{Z}_{n}\rtimes H(n, d)$ consists of all affine transformations $\phi_{m, b}$ ($m \in H(n, d), b \in \ZZZ_n$) defined by $\phi_{m, b}(k^*) = (mk+b)^*$ for $k \in \ZZZ_n$ (with addition undertaken in $\ZZZ_n$). If $m \ne 1$, then $b = k (1-m)$ in $\ZZZ_n$ for some $k \in \ZZZ_n^*$ and so $\phi_{m, b}(k^*) = (mk+k (1-m))^* = k^*$. That is, if $m \ne 1$, then $\phi_{m, b} \in \Aut(\Gamma^\ast)$ fixes at least one vertex of $\Ga^*$. On the other hand, for any retraction $\phi:\Gamma\rightarrow\Gamma^\ast$, by Lemma \ref{lem:pqrsu semi} we have $(\phi  \gamma)\mid_{\Gamma^\ast} \in \Aut(\Gamma^\ast)$ and it does not fix any vertex of $\Gamma^\ast$. Therefore, there exists $b \in \ZZZ_n$ such that $(\phi \gamma)\mid_{\Gamma^\ast} = \phi_{1, b}$, that is, $(\phi \gamma)\mid_{\Gamma^\ast}(k^*) = (k + b)^*$ for $k \in \ZZZ_n$. 
\qed
\end{proof}

Technically, $\Ga^*$ is an induced subgraph of $\Ga$ (and so its vertices $k^*$ are elements of $\ZZZ_p \times \ZZZ_q$), but we identify it with $G(n, d)$ in the way as in Lemma \ref{lem:pqrsu semireg}(a). With this convention the fibres of $\phi$ are:
\be
\label{eq:Pk}
P_{k, \phi} := \{(i, x) \in \ZZZ_p \times \ZZZ_q: \phi(i, x)=k^*\},\;\, k \in \ZZZ_n.
\ee
Since $\phi$ fixes every vertex of $\Ga^*$, we have $k^* \in P_{k, \phi}$ for every $k \in \ZZZ_n$. Since the number of fibres $P_{k, \phi}$ and the number of vertices of $\Ga^*$ are both equal to $n$, it follows that 
\be
\label{eq:fibre}
P_{k, \phi} \cap V(\Ga^*) = \{k^*\}.
\ee
 
\begin{lemma}
\label{lem:pqrsu ind}
Suppose that $\Gamma^* \ne \Ga$ and $\Gamma^\ast$ is not a complete graph. Then for any retraction $\phi:\Gamma\rightarrow\Gamma^\ast$, there exists $b = b(\Ga^*, \phi) \in \ZZZ_n$ such that $(\phi  \gamma^{j})\mid_{\Gamma^\ast}(k^*) = (k+jb)^*$ for any integer $j \ge 1$ and $k^* \in V(\Ga^*)$.  
\end{lemma}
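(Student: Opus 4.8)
The plan is to prove Lemma~\ref{lem:pqrsu ind} by induction on $j$, using Lemma~\ref{lem:pqrsu semireg} as the base case and a substitution argument for the inductive step. By Lemma~\ref{lem:pqrsu semireg}, under the identification of $\Ga^*$ with $G(n,d)$ there is an element $b = b(\Ga^*,\phi) \in \ZZZ_n$ such that $(\phi\g)\mid_{\Gamma^\ast}(k^*) = (k+b)^*$ for all $k \in \ZZZ_n$; this settles $j=1$. The key observation for the inductive step is that $\gamma$ commutes with the action in the sense that we can relate $(\phi\g^{j+1})\mid_{\Ga^*}$ to $(\phi\g^{j})\mid_{\Ga^*}$ by inserting $\gamma$ and using that $\gamma\mid_{\Ga^*}$ maps $\Ga^*$ somewhere inside $\Ga$, which is then pulled back by $\phi$. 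Concretely, I would write, for $k^* \in V(\Ga^*)$,
\[
(\phi\g^{j+1})\mid_{\Gamma^\ast}(k^*) = \phi\big(\g^{j}(\g(k^*))\big),
\]
and then try to replace $\g(k^*)$ by a representative on which the already-established formula for $(\phi\g^{j})\mid_{\Ga^*}$ can be invoked.

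The main technical point is the following: although $\g(k^*) = (i+a^l, x+t^lc^m)$ need not lie in $V(\Ga^*)$, it lies in the fibre $P_{k+b,\phi}$ (by the base case, since $\phi(\g(k^*)) = (\phi\g)\mid_{\Ga^*}(k^*) = (k+b)^*$, so $\g(k^*)$ is in the fibre over $(k+b)^*$). Since $\phi$ followed by $\g^{j}$ and then $\phi$ again depends only on the fibre, and more precisely since for any vertex $v$ in the fibre $P_{k+b,\phi}$ we have $\phi(v) = (k+b)^*$, I would argue $\phi(\g^{j}(v))$ is determined provided $\g^{j}$ respects fibres up to the map $(\phi\g^{j})\mid_{\Ga^*}$. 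The cleanest route is: apply $\phi$ on the inside, i.e.\ use that $\phi$ is a retraction so $\phi(\g^{j}(v))$ and $\phi(\g^{j}(\phi(v)))$ agree whenever $v$ and $\phi(v)$ lie in the same fibre — this is exactly the statement that $\phi\g^{j}$ factors through $\phi$, which holds because $\phi\mid_{\Ga^*}$ is the identity and $\Ga^*$ is a retract. Thus
\[
(\phi\g^{j+1})\mid_{\Gamma^\ast}(k^*) = \phi\big(\g^{j}(\g(k^*))\big) = \big(\phi\g^{j}\big)\mid_{\Ga^*}\big((\phi\g)\mid_{\Ga^*}(k^*)\big) = \big(\phi\g^{j}\big)\mid_{\Ga^*}\big((k+b)^*\big),
\]
and by the induction hypothesis this equals $\big((k+b)+jb\big)^* = (k+(j+1)b)^*$, completing the induction.

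The step I expect to be the main obstacle is justifying rigorously the middle equality above — that $\phi\g^{j}$ evaluated at $\g(k^*)$ equals $(\phi\g^{j})\mid_{\Ga^*}$ evaluated at $\phi(\g(k^*))$. This requires that $\phi\circ\g^{j}\circ\,(\text{inclusion of }\Ga^*)$, when composed appropriately, is insensitive to which vertex of a given fibre we feed in; equivalently, that $\phi\g^{j}(v) = \phi\g^{j}(\phi(v))$ for all $v \in V(\Ga)$. I would prove this auxiliary fact by noting $\phi\g^{j}$ is a homomorphism $\Ga \to \Ga^*$, and that $\phi$ is a retraction onto $\Ga^*$; hence both $\phi\g^{j}$ and $\phi\g^{j}\phi$ are homomorphisms $\Ga \to \Ga^*$ that agree on $V(\Ga^*)$ (since $(\phi\g^{j})\mid_{\Ga^*} = \phi\g^{j}$ on $\Ga^*$ and $\phi$ is the identity there). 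But agreeing on $V(\Ga^*)$ is not by itself enough to conclude they agree everywhere, so the correct argument is instead: $\phi$ and $\phi\phi$ are equal as maps $\Ga \to \Ga^*$ because $\phi\phi = (\phi\mid_{\Ga^*})\circ\phi = \mathrm{id}\circ\phi = \phi$; composing on the left with $\phi\g^{j}$-type manipulations must be done carefully, and the real identity I need is simply that $\g^{j}$ and $\phi$ interact via $\phi\g^{j} = (\phi\g^{j})\mid_{\Ga^*}\circ\phi$, which follows from $\phi\g^{j}(v)$ depending only on $\phi(v)$ — and \emph{that} in turn I would derive from the base-case structure by an independent induction showing $\phi(\g^{j}(v))$ is a function of $\phi(v)$ alone. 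Packaging this correctly so the two inductions do not become circular is the delicate part; I would state and prove the "$\phi\g^{j}$ factors through $\phi$" claim first as a standalone step, then run the additive formula as a clean consequence.
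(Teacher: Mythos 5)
Your base case is fine, but the inductive step has a genuine gap at exactly the point you flag. The middle equality $\phi\bigl(\g^{j}(\g(k^*))\bigr) = (\phi\g^{j})\mid_{\Ga^*}\bigl(\phi(\g(k^*))\bigr)$ requires that $\phi\g^{j}$ take the same value at $\g(k^*)$ and at the (generally different) vertex $(k+b)^*$ of $\Ga^*$ lying in the same fibre $P_{k+b,\phi}$; equivalently, that $\g^{j}$ map each fibre of $\phi$ into a single fibre of $\phi$. Nothing proved up to this point gives that: Lemmas \ref{lem:pqrsu semi} and \ref{lem:pqrsu semireg} only control the restriction of $\phi\g$ to $V(\Ga^*)$, i.e.\ the value of $\phi\g$ at the one distinguished vertex $k^*$ of each fibre, and say nothing about $\phi\g$ on the other $pq/n-1$ vertices of that fibre. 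Your two suggested repairs do not close the gap: agreement of $\phi\g^{j}$ and $\phi\g^{j}\phi$ on $V(\Ga^*)$ is, as you yourself note, insufficient; and the ``independent induction showing $\phi(\g^{j}(v))$ is a function of $\phi(v)$ alone'' is precisely the assertion that $\g$ permutes the fibres of $\phi$ --- which is essentially the structural conclusion of Theorem \ref{thm:pqrsu core} that this lemma is being used to establish, so assuming it here would be circular.

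The paper's inductive step avoids the factorization issue entirely by transporting the core rather than the argument of $\phi$: it sets $\Ga^{\#} := \g^{j}(\Ga^*)$ (another copy of the core), builds a retraction $\psi = \tau\g^{j}\phi$ of $\Ga$ onto $\Ga^{\#}$ that fixes $\Ga^{\#}$ pointwise and whose fibres coincide with those of $\phi$ (relabelled so that $P_{k,\psi} = P_{k+jb,\phi}$), and then applies Lemma \ref{lem:pqrsu semireg} to the pair $(\Ga^{\#},\psi)$. Since $\g^{j+1}(k^*) = \g(k^{\#})$ with $k^{\#} \in V(\Ga^{\#})$, one only ever needs to evaluate $\psi\g$ on vertices of the core $\Ga^{\#}$ --- which is exactly what Lemma \ref{lem:pqrsu semireg} controls --- and a single evaluation at $0^{\#}$ identifies the new translation constant with $b$. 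If you want to keep your outline, you must replace the ``$\phi\g^{j}$ factors through $\phi$'' claim with a device of this kind; as written the proof does not go through.
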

 
\begin{proof}
Since $\Ga$ is symmetric and any automorphism of $\Ga$ maps a core to a core, without loss of generality we may assume that the core $\Ga^*$ under consideration contains the arc of $\Ga$ from $(0, 0) \in \ZZZ_p \times \ZZZ_q$ to $(a^l, t^lc^m) \in \ZZZ_p \times \ZZZ_q$ (in particular, $(0, 0), (a^l, t^lc^m) \in V(\Ga^*)$). Without loss of generality we may also assume that $0^* = (0, 0)$.
By Lemma \ref{lem:pqrsu semireg}, there exists $b = b(\Ga^*, \phi) \in \ZZZ_n$ such that $(\phi \gamma)\mid_{\Gamma^\ast}(k^*) = (k + b)^*$ for every $k^* \in V(\Ga^*)$, with addition $k+b$ undertaken in $\ZZZ_n$. More explicitly, if $k^* = (i, x)$, then $\phi(k^* + (a^l, t^lc^m)) = \phi(i+a^l, x+t^lc^m) = (\phi \gamma)(i, x) = (\phi \gamma)\mid_{\Gamma^\ast}(k^*) = (k + b)^*$, or equivalently, $k^* + (a^l, t^lc^m) \in P_{k+b, \phi}$. In particular, $\phi((a^l, t^lc^m)) = \phi(0^* + (a^l, t^lc^m)) = (\phi \gamma)\mid_{\Gamma^\ast}(0^*) = b^*$ and $(a^l, t^lc^m) \in P_{b, \phi}$. Since $b^*$ is the unique vertex of $\Ga^*$ contained in $P_{b, \phi}$, from (\ref{eq:fibre}) it follows that $b^* = (a^l, t^lc^m)$.
 
We prove $(\phi \gamma^j)\mid_{\Gamma^\ast}(k^*) = (k + jb)^*$ for any $\phi$ and $k^* \in V(\Ga^*)$ by induction on $j$. This is true when $j=1$ as noted above. Assume that, for any retraction $\phi:\Gamma\rightarrow\Gamma^\ast$ that fixes every vertex of $\Ga^*$, the result is true for some $j \ge 1$. In what follows we prove $(\phi  \gamma^{j+1})\mid_{\Gamma^\ast}(k^*) = (k + (j+1)b)^*$ for $k^* \in V(\Ga^*)$ to complete the proof of the lemma. 

Consider the image $\Ga^{\#} := \gamma^j(\Ga^*)$ of $\Ga^*$ under $\g^j$. Since $\g^j \in \Aut(\Ga)$, $\Ga^{\#} \cong \Ga^* \cong G(n, d)$ and $\Ga^{\#}$ is a core of $\Ga$. The vertices of $\Ga^{\#}$ are $k^{\#} := \gamma^j(k^*)$ (where $k^* \in V(\Ga^*)$), which are labelled by $k \in \ZZZ_n$ respectively. Since $\phi(k^{\#}) = (\phi\g^j)(k^*) = (k + jb)^*$ by the induction hypothesis, we have $k^{\#} \in P_{k + jb, \phi}$. Moreover, since $\g^j \in \Aut(\Ga)$ and $\phi$ is a retraction, $\g^j \phi: \Ga \rightarrow \Ga^{\#}$ is a retraction. 

Define $\tau: V(\Gamma^{\#}) \rightarrow V(\Gamma^{\#})$ by $\tau(k^{\#}) := (k-jb)^{\#}$ and then let $\psi := \tau\gamma^j\phi: \Gamma \rightarrow \Gamma^{\#}$. We have: $h^{\#}, k^{\#}$ are adjacent in $\Gamma^{\#}$ $\Leftrightarrow$ $h^{*}, k^{*}$ are adjacent in $\Gamma^{*}$ $\Leftrightarrow$ $k-h \in H(n, d)$. Thus $\tau \in \Aut(\Gamma^{\#})$ by the definition of $\tau$. Hence $\psi: \Gamma \rightarrow \Gamma^{\#}$ is a retraction and the set of fibres of $\psi$ is the same as that of $\g^j \phi$. However, the fibres of $\gamma^j\phi$ are the subsets $\{(i, x) \in \ZZZ_p \times \ZZZ_q: (\g^j \phi)(i, x)=\gamma^j(k^*)\} = \{(i, x) \in \ZZZ_p \times \ZZZ_q: \phi(i, x)=k^*\} = P_{k, \phi}$, $k \in \ZZZ_n$. Therefore, the set of fibres of $\psi$ is identical to the set of fibres $P_{k, \phi}$ of $\phi$. Moreover, $\psi(k^{\#})=(\tau\gamma^j\phi\gamma^j)(k^\ast)=(\tau\gamma^j)((k+jb)^\ast)=\tau((k+jb)^{\#})=k^{\#}$, that is, $\psi$ fixes every vertex of $\Gamma^{\#}$. Since $k^{\#} \in P_{k + jb, \phi}$ as shown above and the set of fibres of $\psi$ is $\{P_{k, \phi}: k \in \ZZZ_n\}$, it follows that the unique fibre of $\psi$ containing $k^{\#}$, denoted by $P_{k, \psi}$, is given by $P_{k, \psi} = P_{k + jb, \phi}$. In particular, $0^{\#} = (ja^l, j t^lc^m) \in P_{jb, \phi}$ and $b^{\#} = ((j+1)a^l, (j+1)t^lc^m) \in P_{(j+1)b, \phi}$, and so $\g(0^{\#}) = b^{\#}$. Since $\psi$ fixes every vertex of $\Ga^{\#}$, we then have $(\psi\g)|_{\Ga^{\#}}(0^{\#}) = \psi(b^{\#}) = b^{\#}$. Thus, when applying Lemma \ref{lem:pqrsu semireg} to $(\Ga^{\#}, \psi)$, the element $b(\Ga^{\#}, \psi)$ of $\ZZZ_n$ involved is equal to $b$ and so $\psi (\g(k^{\#})) = (\psi\g)|_{\Ga^{\#}}(k^{\#}) = (k+b)^{\#}$ by this lemma. Therefore, $\g^{j+1}(k^{*}) = \g(k^{\#}) \in P_{{k+b}, \psi} = P_{k + (j+1)b, \phi}$, that is, $(\phi \g^{j+1})|_{\Ga^*} (k^{*}) = (k + (j+1)b)^*$ as required. 
\qed
\end{proof}

\begin{proof}\textbf{of Theorem \ref{thm:pqrsu core}}~~
Since $\Ga$ has order $pq$ and is not a core, by Theorem \ref{thm:order vt}, $|V(\Gamma^\ast)|=p$ or $q$. Denote $\Psi := G(p,s)\times G(q,u)$. 

\medskip
\textsf{Case 1: $\Gamma^\ast$ is a complete graph.}~
Then $\Gamma^\ast\cong G(n,n-1)$, where $n=p\textnormal{ or }q$, and so $\Gamma$ contains a subgraph isomorphic to the complete graph $G(n,n-1)$. Since by Lemma \ref{lem:pqrsu cat prod}, $\Psi$ contains $\Gamma$ as a spanning subgraph, it contains a subgraph isomorphic to $G(n,n-1)$. Hence $G(n,n-1) \rightarrow \Psi$. On the other hand, by Lemma \ref{lem:prop2.1}, there are projection homomorphisms $\Psi \rightarrow G(p,s)$ and $\Psi \rightarrow G(q,u)$. Also, we have $G(p,s) \rightarrow G(n,n-1)$ or $G(q,u) \rightarrow G(n,n-1)$ since $G(p,s)$ or $G(q,u)$ is a subgraph of $G(n, n-1)$, depending on whether $n=p$ or $q$. In either case we have $\Psi \rightarrow G(n, n-1)$. Therefore, $\Psi \leftrightarrow G(n, n-1)$ and so $\Psi^\ast\cong G(n,n-1)$. In particular, $\Psi$ is not a core. Thus, by Lemma \ref{lem:pq circ hom}, either $\Psi^\ast\cong G(p,s)$ or $\Psi^\ast\cong G(q,u)$. Therefore, if $n=p$ then $s=p-1$, and  if $n=q$ then $u=q-1$. 

Since $\Ga$ is a spanning subgraph of $\Psi$, the identity map from $\ZZZ_p \times \ZZZ_q$ to itself is a homomorphism $\Ga \rightarrow \Psi$. This together the projections $\Psi \rightarrow G(p,s)$ and $\Psi \rightarrow G(q,u)$ implies that the projections
\begin{equation*}
\pi:\Gamma\rightarrow G(p,s),\;\, (i, x) \mapsto i; \quad 
\rho:\Gamma\rightarrow G(q,u),\;\, (i, x) \mapsto x
\end{equation*}
are homomorphisms. Therefore, if $n=p$, then $s=p-1$ and $\pi:\Gamma\rightarrow G(p,p-1)$ is a retraction; if $n=q$, then $u=q-1$ and $\rho:\Gamma\rightarrow G(q,q-1)$ is a retraction.

\medskip
\textsf{Case 2: $\Gamma^\ast$ is not a complete graph.}~We will only consider the case where $|V(\Gamma^\ast)|=p$ since the case $|V(\Gamma^\ast)|=q$ can be dealt with similarly. 

So let us assume $|V(\Gamma^\ast)|=p$ and $\Gamma^\ast\ncong K_p$. Then $n=p$ by Lemma \ref{lem:pqrsu semireg}. We aim to prove $\Ga^* \cong G(p, s)$. Let $\phi: \Ga \rightarrow \Ga^*$ be any retraction that fixes each vertex of $\Ga^*$. By Lemma \ref{lem:pqrsu ind}, there exists an element $b \in \ZZZ_p$ such that $(\phi \gamma^{jp})\mid_{\Gamma^\ast}(k^*) = (k+jpb)^*$ for any integer $j \ge 1$ and $k\in\mathbb{Z}_{p}$. Since $k +  jpb \equiv k \mod{p}$, we then have $(\phi \gamma^{jp})\mid_{\Gamma^\ast}(k^*) = k^*$ for each $k\in\mathbb{Z}_{p}$. In other words, $\gamma^{jp}(k^*) \in P_{k, \phi}$ for $k\in\mathbb{Z}_{p}$. More explicitly, letting $k^* = (i, x)$, then $\gamma^{jp}(k^*) = \gamma^{jp}(i, x) = (i + jpa^l, x + jpt^lc^m) = (i, x + jpt^lc^m) \in P_{k, \phi}$.
Since $p$ and $q$ are distinct primes and $t, c\in\mathbb{Z}_q^\ast$, we have $pt^lc^m\in\mathbb{Z}^*_q$ and so $\langle pt^lc^m \rangle= \mathbb{Z}_q$. In other words, $x + jpt^lc^m$ is running over all elements of $\ZZZ_q$ when $j$ is running over all positive integers. Therefore, from $(i, x + jpt^lc^m) \in P_{k, \phi}$ we obtain that $\{(i, y): y \in \mathbb{Z}_q\}\subseteq P_{k, \phi}$. On the other hand, by Theorem \ref{thm:order vt}, each fibre $P_{k, \phi}$ of $\phi$ has order $q$. Therefore, 
$$
P_{k, \phi} = \{(i, y): y \in \mathbb{Z}_q\}
$$
for all $k^* = (i, x) \in V(\Ga^*)$. Since $k^*$ is the unique vertex of $\Ga^*$ in $P_{k, \phi}$, $k^*$ and $i$ determine each other uniquely. Thus $i$ is running over all elements of $\ZZZ_p$ when $k^*$ is running over all vertices of $\Ga^*$.  

By Lemma \ref{lem:pqrsu cat prod}, $\Gamma$ is a spanning subgraph of $\Psi$, and moreover $\Ga \ne \Psi$ since $t \not \in H(q, r)$. Hence $\Ga^*$ is a subgraph of $\Psi$ and so we can talk about the inclusion homomorphism $\d: \Ga^* \rightarrow \Psi$. Let $\pi: \Psi \rightarrow G(p,s), (i, x) \mapsto i$ be the projection from $\Psi$ to $G(p, s)$. Since $\pi$ is surjective, $\pi \d: \Ga^* \rightarrow G(p, s)$ is a surjective homomorphism. This together with the fact that $\Ga^*$ and $G(p, s)$ have the same order implies that $\pi \d$ is bijective. Hence $(\pi \d)(\Ga^*) \cong \Ga^*$ and $(\pi \d)(\Ga^*)$ is a spanning subgraph of $G(p,s)$. 

We claim that $(\pi \d)(\Ga^*) = G(p, s)$. In fact, let $i, j \in \ZZZ_p$ be any two adjacent vertices of $G(p, s)$, so that $j - i = a^{l_0} \in H(p,s)$ for some integer $l_0$. Then $i$ and $j$ each determines uniquely a vertex of $\Ga^*$, say, $h^* = (i, x)$ and $k^* = (j, y)$, respectively. The fibres of $\phi$ containing $h^*$ and $k^*$ are $P_{h, \phi}$ and $P_{k, \phi}$, respectively. Since $j - i = a^{l_0} \in H(p,s)$, each vertex $(i, z) \in P_{h, \phi}$ is adjacent in $\Ga$ to at least one vertex of $P_{k, \phi}$, say, $(j, z+t^{l_0} c)$. Thus $h^*$ and $k^*$ are adjacent in $\Ga^*$. In other words, if two vertices of $G(p, s)$ are adjacent, then the corresponding vertices of $\Ga^*$ are adjacent in $\Ga^*$. Hence $|E(\Ga^*)| \ge |E(G(p, s))|$. On the other hand, $|E(\Ga^*)| = |E((\pi \d)(\Ga^*))| \le |E(G(p, s))|$ as $(\pi \d)(\Ga^*)$ is a spanning subgraph of $G(p,s)$. Therefore, $|E(\Ga^*)| = |E((\pi \d)(\Ga^*))| = |E(G(p, s))|$ and so $(\pi \d)(\Ga^*) = G(p, s)$. Consequently, $\Ga^* \cong G(p, s)$. Moreover, from the discussion above we see that the fibres of $\phi$ are the sets $\{(i, x): x \in \ZZZ_q\}$, $i \in \ZZZ_p$, as claimed in (a). 

Similarly, one can prove that, if $|V(\Gamma^\ast)|=q$, then the statements in (b) hold.  
\qed
\end{proof}

Recall that in the proof of Lemma \ref{lem:pqrsu cat prod} we proved that $H(q, u) = \cup_{l=1}^{k} t^l H(q,r)$. This implies that for any arc $(x,y)$ of $G(q,u)$ there exists an integer $l$ such that $y-x\in t^l H(q,r)$.

\bigskip
\begin{proof}\textbf{of Theorem \ref{thm:pqrsu}}~~
We prove (a) only since the proof of (b) is similar. Denote $\Psi := G(p, s) \times G(q, u)$. 

\medskip
\textsf{Sufficiency:} Suppose that there exists a homomorphism $\eta:G(p,s)\rightarrow G(q,u)$ such that $\eta(j)-\eta(i)\in t^lH(q,r)$ for every arc $(i,j)$ of $G(p,s)$ with $j-i=a^l$. Let $\Delta$ be the  subgraph of $\Gamma$ induced by $\{(i,\eta(i)): i\in\mathbb{Z}_{p}\} \subset \ZZZ_p \times \ZZZ_q$. The definition of $\eta$ ensures that the map $(i,\eta(i)) \mapsto i$ from $V(\Delta)$ to $V(G(p,s)) = \ZZZ_p$ is an isomorphism from $\De$ to $G(p,s)$. Since $\Ga \rightarrow \Psi$ by inclusion (Lemma \ref{lem:pqrsu cat prod}) and $\Psi \rightarrow G(p, s)$ by projection (Lemma \ref{lem:prop2.1}), we have $\Ga \rightarrow G(p,s) \cong \De$. This together with the inclusion homomorphism $\De \rightarrow \Ga$ implies that $\Ga \leftrightarrow \De$. Therefore, $\Ga^* \cong \De^* \cong G(p,s)$ by Lemma \ref{lem:hom eq} and Corollary \ref{coro:prime order}.

\medskip
\textsf{Necessity:} Suppose that $\Ga^* \cong G(p,s)$. Then by Theorem \ref{thm:pqrsu core} there is a retraction $\phi:\Gamma\rightarrow\Gamma^\ast$ whose fibres are the sets $\{(i,x): x \in \ZZZ_q\}$, $i \in \ZZZ_p$. On the other hand, we have $\delta: \Gamma \rightarrow \Psi$ by inclusion (Lemma \ref{lem:pqrsu cat prod}) and $\pi: \Psi \rightarrow G(p,s), (i, x) \mapsto i$ by projection. Thus $\pi\delta:\Gamma\rightarrow G(p,s), (i, x) \mapsto i$ is a homomorphism whose set of fibres is identical to the set of fibres of $\phi$. As seen in (\ref{eq:fibre}), each fibre of $\phi$ contains exactly one vertex of $\Ga^*$. Thus each fibre of $\pi\delta$ contains exactly one vertex of $\Ga^*$. In other words,  for each $i\in\mathbb{Z}_p$, $\Gamma^\ast$ contains exactly one vertex of the form $(i,x)$. Thus $\theta := (\pi\delta)\mid_{\Gamma^\ast}: V(\Gamma^\ast) \rightarrow V(G(p,s)), (i,x)\mapsto i$, is a bijection. Since $\pi\delta$ is a homomorphism, $\theta: \Gamma^\ast \rightarrow G(p,s)$ is a homomorphism. Since $|E(\Gamma^\ast) |=|E(G(p,s))|$ (as $\Gamma^\ast\cong G(p,s)$), it follows that $\theta$ is an isomorphism from $\Ga^*$ to $G(p, s)$.  

Let $\rho: \Psi \rightarrow G(q,u), (i, x) \mapsto x$ be the projection from $\Psi$ to $G(q,u)$. Then the projection $\rho\delta:\Gamma\rightarrow G(q,u), (i, x) \mapsto x$ is a homomorphism from $\Gamma$ to $G(q,u)$. Hence $\psi := (\rho\delta)\mid_{\Gamma^\ast}:\Gamma^\ast\rightarrow G(q,u), (i, x) \mapsto x$ is a homomorphism. Consequently, $\eta := \psi\theta^{-1}: G(p,s)\rightarrow G(q,u)$ is a homomorphism, and it maps each $i \in \ZZZ_p$ to the unique $x(i) \in \ZZZ_q$ such that $(i, x(i))$ is the unique vertex of $\Ga^*$ contained in the fibre $\{(i, x): x \in \ZZZ_q\}$ of $\phi$. If $(i, j)$ is an arc of $G(p,s)$, then $j = i + a^l$ for some integer $l$, and $(\theta^{-1}(i), \theta^{-1}(j))$ is an arc of $\Ga^*$ (as $\theta^{-1}$ is an isomorphism from $G(p, s)$ to $\Ga^*$) and hence an arc of $\Ga$. Since $\theta^{-1}(i) = (i, x(i))$, it follows from the definition of $\Ga$ that $\theta^{-1}(j) = (i + a^l, x(i) + t^l c^m)$ for some integer $m$. Therefore, $\eta(j) - \eta(i) = (x(i) + t^l c^m) - x(i) = t^l c^m \in t^l H(q, r)$ as required. 
\qed
\end{proof}

\section{Cores of symmetric Maru\v{s}i\v{c}-Scapellato graphs}
\label{sec:cores ms}

In this section we determine the cores of symmetric Maru\v{s}i\v{c}-Scapellato graphs, to be given in Theorem \ref{thm:pq ms cores}. Such graphs have order $pq$ for a Fermat prime $q$ and a prime factor $p$ of $q-2$. In \S\ref{subsec:ms} we give the definition of (general) Maru\v{s}i\v{c}-Scapellato graphs. In \S\ref{subsec:clq} and \S\ref{subsec:ind} we derive bounds on the clique and independent numbers of a general (not necessarily symmetric) Maru\v{s}i\v{c}-Scapellato graph of order $pq$, respectively. It turns out that these bounds are crucial to the proof of Theorem \ref{thm:pq ms cores}. In \S\ref{subsec:order core} we give necessary conditions for the core of a symmetric Maru\v{s}i\v{c}-Scapellato graph to have order $q$, which will be used to show that this occurs only in a certain very special case. After a brief discussion on the cores of two specific rank-three graphs in \S\ref{subsec:rank3}, finally we prove Theorem \ref{thm:pq ms cores} in \S\ref{subsec:proof cores ms}.

\subsection{Maru\v{s}i\v{c}-Scapellato graphs}
\label{subsec:ms}

Maru\v{s}i\v{c}-Scapellato graphs (\textit{MS graphs} for short) were introduced in \cite{MR1174460}. We adopt their definition and notation\footnote{$\Gamma(a,m,S,U)$ in Definition \ref{defn:ms} is the graph $X(a,m,S,U)$ in \cite[Definition 3.6]{MR1223702} and \cite[Definition 1.3]{MR1174460}, and is $F(2^a +1, m, S, U)$ in \cite[p.188]{MR1289072} where this graph is called a Fermat graph.} from \cite[Definition 3.6]{MR1223702}. 

\begin{definition} 
\label{defn:ms}
\rm
Let $a > 1$ be an integer, $m>1$ a divisor of $2^a-1$, $S=-S$ a (possibly empty) symmetric subset of $\mathbb{Z}_m^\ast$, $U$ a subset of $\mathbb{Z}_m$, and $w$ a primitive element of $\GF(2^a)$. The \textit{Maru\v{s}i\v{c}-Scapellato graph} $\Gamma = \Gamma(a,m,S,U)$ is the graph with vertex set 
\begin{equation*}
V(\Gamma) := \PG(1,2^a)\times \mathbb{Z}_m\quad \mbox{(with $\PG(1,2^a)$ identified to $\GF(2^a)\cup \{\infty\}$)}
\end{equation*}
such that $(\infty,r)\in V(\Gamma)$ has neighbourhood
$$
\Gamma((\infty,r)) := \{(\infty,r+s): s\in S\}\cup\{(x,r+u): x\in \GF(2^a), u\in U\}
$$
and $(x,r)\in V(\Gamma)$ (where $x\in \GF(2^a)$) has neighbourhood
$$
\Gamma((x,r)) := \{(x,r+s): s\in S\}\cup\{(\infty,r-u): u\in U\}\cup  
\{(x+w^i,-r+u+2i): i\in \mathbb{Z}_{2^a-1}, u\in U\}.
$$
\end{definition}

Obviously, $\Ga = \Gamma(a,m,S,U)$ has valency $|S| + 2^a |U|$.   
Maru\v{s}i\v{c} and Scapellato \cite{MR1214891} proved that $\Gamma$ admits $\SL(2,2^{a})$ as a vertex-transitive group of automorphisms. Moreover, they showed that 
\be
\label{eq:Sig}
\BB := \{B_x: x\in \PG(1,2^a)\},\,\; \mbox{where $B_x:=\{(x,r): r\in \mathbb{Z}_m\}$}
\ee
is an $\SL(2,2^{a})$-invariant partition of $V(\Ga)$ such that the quotient graph $\Ga_{\BB}$ is the complete graph of order $2^{a} + 1$, that is, there is at least one edge of $\Ga$ between any two blocks of $\BB$. They proved further that $\Gamma$ is $\SL(2,2^{a})$-symmetric if $\Gamma=\Gamma(a,m,\emptyset,\left\lbrace u\right\rbrace)$ for some $u\in\mathbb{Z}_m$.  

An integer of the form $F_s := 2^{2^s}+1$ is called a \textit{Fermat number}, where $s\geq 0$ is an integer; if $F_s$ is a prime, then it is called a \textit{Fermat prime}.  

\delete
{
\sm{I am not sure whether we should mention $F(s)$ and $F'(s)$ in our paper. Where do we need them?}
\rk{We don't really need these graphs. But in \cite{MR1223693} they define $F(s)$ and $F'(s)$, not MS graphs. Then in the following paragraphs we show that $F(s)$ and $F'(s)$ are in fact MS graphs. I think we must at least make a passing reference to this.}

\begin{example}
\label{ex:ms3q}
\rm
Let $T:=\PSL(2, 2^{2^s})$, where $s \ge 1$ and $q = 2^{2^s} + 1$ is a Fermat prime. Then $T$ has a transitive representation of degree $3q$ on $V := \PG(1, 2^{2^s}) \times \ZZZ_3$ (see \cite[Lemma 4.6]{MR1223693}). Three mutually isomorphic graphs, $F_0(s)$, $F_1(s)$ and $F_2(s)$, are defined in \cite{MR1223693} as orbital graphs of $T$ on $V$. Define $F(s) := F_0(s)$ and $F'(s) := F_1(s)\cup F_2(s)$ as in \cite{MR1223693}. \sm{defn verified}

In \cite[Lemma 4.7]{MR1223693} it was proved that $F(s)$ and $F'(s)$ are imprimitive symmetric graphs of order $3q$ with blocks of size $3$, but without blocks of size $q$.  

Since $2^{2^s}$ is even, $\PSL(2,2^{2^s})=\SL(2,2^{2^s})$ and so both $F(s)$ and $F'(s)$ are orbital graphs of $\SL(2,2^{2^s})$ with respect to a transitive representation of degree $3q$. \sm{$F'(s)$ is a generalized orbital graph of $\SL(2,2^{2^s})$ on $V$, and so we cannot use \cite[Theorem 3.1]{MR1214891} directly} However, by \cite[Theorem 3.1]{MR1214891}, there is a nontrivial orbital graph of $T$ on $V$ with $\{B_x: x\in \PG(1,2^{2^s})\}$ as a complete block system of block size $3$, if and only if $\Gamma\cong \Gamma(a,3,\emptyset,\{u\})$ for some $u\in\mathbb{Z}_3$. Since each $F_i(s)$ is such a graph, we have $F_i(s)\cong\Gamma(2^s,3,\emptyset,\{i\})$ for $i = 0, 1, 2$. Therefore, 
$$
F(s)\cong\Gamma(2^s,3,\emptyset,\{0\}),\;\, 
F'(s)\cong\Gamma(2^s,3,\emptyset,\{1,2\}). 
$$ 
\end{example}
}

The following result of Praeger, Wang and Xu \cite{MR1223702} determines all symmetric MS graphs of order a product of two distinct primes. (Note that the symmetric graphs $F(s), F'(s)$ of order $3q$ defined in \cite{MR1223693} (where $q = 2^{2^s} + 1$ is a Fermat prime with $s \ge 1$) are isomorphic to the MS graphs $\Gamma(2^s,3,\emptyset,\{0\}), \Gamma(2^s,3,\emptyset,\{1,2\})$, respectively.)

\begin{theorem}[{\cite[3.7(b), 3.8 and 4.9(a)]{MR1223702}}]
\label{thm:sym ms}
Let $q=2^a+1$ be a Fermat prime, where $a=2^s$ with $s \ge 1$, and let $p$ be a prime divisor of $2^{a}-1$. Then an MS graph of order $pq$ is symmetric if and only if it is of the form $\Gamma = \Gamma(a,p,\emptyset,U)$, where either
\begin{equation*}
U=\{u\}
\end{equation*}
for some $u\in\mathbb{Z}_p$, or
\be
\label{eq:uab}
U = U_{e, i} := \{i2^{ej}: 0 \leq j < d/e\}
\ee
for some $i \in \ZZZ_p^*$ and divisor $e \ge 1$ of $\gcd(d, a)$ with $1 < d/e < p-1$, where $d$ is the order of $2$ in $\ZZZ_p^*$. In the former case, $\Gamma\cong \Gamma(a,p,\emptyset,\{0\})$ and $\val(\Gamma)=2^{a}$; in the latter case, $\val(\Gamma)=2^{a}d/e$.
\end{theorem}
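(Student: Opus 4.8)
The plan is to proceed in four stages: fix the parameter $m$, reduce to $S=\emptyset$, determine the admissible sets $U$, and finally record the normal form and valency. First, since $|V(\Gamma(a,m,S,U))|=m(2^a+1)=mq$ and $q$ is prime, the equality $|V(\Gamma)|=pq$ forces $m=p$; so every MS graph of order $pq$ is $\Gamma(a,p,S,U)$ with $S=-S\subseteq\ZZZ_p^\ast$ and $U\subseteq\ZZZ_p$, and we may assume $U\neq\emptyset$ (otherwise $\Gamma$ is a disconnected union of $q$ circulants of order $p$, not among the connected graphs under consideration). Next, to show $S=\emptyset$: if $S\neq\emptyset$ then every block $B_x$ of the $\SL(2,2^a)$-invariant partition $\BB$ of (\ref{eq:Sig}) carries an edge, while $\Gamma_{\BB}=K_q$ so there is also an edge between any two blocks. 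If $G\leq\Aut(\Gamma)$ is arc-transitive then, using the classification of automorphism groups underlying Theorem~\ref{thm:circu}, $G$ normalizes $\SL(2,2^a)$ and hence preserves $\BB$ (the unique $\SL(2,2^a)$-block system with blocks of the prime size $p$); but then no element of $G$ can carry an arc lying inside a block to an arc joining two blocks, contradicting arc-transitivity since $\Gamma$ has arcs of both kinds. Hence $S=\emptyset$. (The possibility that the relevant $G$-invariant partition has blocks of size $q$ is handled separately: its quotient is $K_p$, and running the analysis below with the two coordinates interchanged yields no further symmetric MS graphs.)

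Now assume $\Gamma=\Gamma(a,p,\emptyset,U)$, and fix the base vertex $v_0=(\infty,0)$, whose out-neighbourhood is $N=\{(x,u'):x\in\GF(2^a),\ u'\in U\}$ of size $2^a|U|$. Then $\Gamma$ is symmetric exactly when some $G$ with $\SL(2,2^a)\leq G\leq\Aut(\Gamma)$ has $G_{v_0}$ transitive on $N$. A direct computation identifies the stabilizer $H=\SL(2,2^a)_{v_0}$: it is generated by the unipotent subgroup (isomorphic to $(\GF(2^a),+)$, acting by $(x,r)\mapsto(x+b,r)$ on finite blocks and trivially on $B_\infty$, for $b\in\GF(2^a)$) together with $g^p$, where $g\in\SL(2,2^a)$ induces $z\mapsto wz$ on the first coordinate and, one checks, acts on $V(\Gamma)$ by $(x,r)\mapsto(xw,r+1)$ (finite $x$) and $(\infty,r)\mapsto(\infty,r+1)$, $w$ being the primitive element fixed in the definition of $\Gamma$. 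Since $g^p$ acts on $N$ by $(x,u')\mapsto(xw^p,u')$ and the unipotent part is transitive on $\{(x,u'):x\in\GF(2^a)\}$ for each fixed $u'$, the $H$-orbits on $N$ are precisely the $|U|$ layers $\{(x,u'):x\in\GF(2^a)\}$, $u'\in U$. In particular $\SL(2,2^a)$ itself is arc-transitive iff $|U|=1$, which gives the case $U=\{u\}$.

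When $|U|>1$ the only further vertex-stabilizer elements available are the Frobenius twists $\phi_e\colon(y,r)\mapsto(y^{2^e},2^e r)$; a computation shows $\phi_e\in\Aut(\Gamma)$ exactly when $2^eU=U$, and $\phi_e$ permutes the layers of $N$ via $u'\mapsto 2^eu'$. Hence $G_{v_0}$ is transitive on $N$ iff $U$ is a single orbit of $\Lambda:=\langle 2^e:2^eU=U,\ e\in\ZZZ_a\rangle\leq\ZZZ_p^\ast$; as $\Lambda$ is then a subgroup of the cyclic group $\langle 2\rangle$ of order $d$ and $U=u_0\Lambda$ is one of its cosets, this forces $U=U_{e,i}=i\langle 2^e\rangle$ for a divisor $e$ of $\gcd(d,a)$ with $d/e=|U|>1$. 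Finally the boundary value $d/e=p-1$, which forces $U=\ZZZ_p\setminus\{0\}$, is excluded: there $\val(\Gamma)=(p-1)2^a=(p-1)(q-1)$, and one checks the graph is the circulant $K_q[\overline{K}_p]-pK_q\cong K_q\times K_p$ already listed in Table~\ref{tab:cir}, so it is not recorded as a separate MS graph.

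Finally I would record the normal form and valencies. For $U=\{u\}$, the map $(\infty,r)\mapsto(\infty,r+c)$, $(x,r)\mapsto(x,r-c)$ (finite $x$) is readily checked to be an isomorphism $\Gamma(a,p,\emptyset,U)\cong\Gamma(a,p,\emptyset,U-2c)$, so taking $2c=u$ (possible since $p$ is odd) gives $\Gamma\cong\Gamma(a,p,\emptyset,\{0\})$; the valency is $|S|+2^a|U|=2^a$. For $U=U_{e,i}$ we have $|U|=d/e$, hence $\val(\Gamma)=2^a(d/e)$. The hard part will be the forward direction of the third stage: to deduce the shape of $U$ from arc-transitivity alone one must know the structure of $\Aut(\Gamma)$---that every arc-transitive $G$ has $\soc(G)=\SL(2,2^a)$ and lies inside $\SL(2,2^a)$ extended by Frobenius twists---which is precisely the input provided by the classification of symmetric graphs of order $pq$ (ultimately resting on the finite simple group classification). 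Granted that, the orbit bookkeeping, the normal-form isomorphism, and the valency count are routine.
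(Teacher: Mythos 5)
The first thing to say is that the paper does not prove Theorem \ref{thm:sym ms} at all: it is imported verbatim from Praeger--Wang--Xu \cite{MR1223702} (their 3.7(b), 3.8 and 4.9(a)), so there is no internal proof to measure your argument against. Judged on its own terms, your proposal handles the routine parts correctly: the reduction $m=p$, the computation of the $\SL(2,2^{a})_{(\infty,0)}$-orbits on the neighbourhood (the $|U|$ ``layers''), the verification that the Frobenius twist $\phi_e$ is an automorphism exactly when $2^eU=U$, the normal form $\Gamma(a,p,\emptyset,\{u\})\cong\Gamma(a,p,\emptyset,\{0\})$, and the valency count. But the entire ``only if'' direction rests on the assertion that every arc-transitive subgroup of $\Aut(\Ga)$ normalizes $\SL(2,2^{a})$, preserves $\BB$, and has point stabilizer generated by $\SL(2,2^{a})_{(\infty,0)}$ together with Frobenius twists. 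You flag this as the hard input, which is honest, but it is not a small lemma to be granted: it is precisely the determination of $\Aut(\Gamma(a,m,S,U))$ due to Maru\v{s}i\v{c}--Scapellato \cite{MR1214891} and Praeger--Wang--Xu, i.e.\ essentially the content of the theorem. What remains of your argument is sufficiency plus orbit bookkeeping.

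There is, moreover, a concrete error at the one point where the bookkeeping is genuinely delicate: the exclusion of $d/e=p-1$, i.e.\ $U=\ZZZ_p\setminus\{0\}$ with $2$ a primitive root modulo $p$. You dispose of it by claiming $\Gamma(a,p,\emptyset,\ZZZ_p^*)\cong K_q[\overline{K}_{p}]-pK_q\cong K_q\times K_p$; this is false. The complement of $\Gamma(a,p,\emptyset,\ZZZ_p^*)$ is $\Gamma(a,p,\ZZZ_p^*,\{0\})$, whose edge set is the disjoint union of the $q$ block-cliques $K_p$ with the \emph{connected} graph $\Gamma(a,p,\emptyset,\{0\})$ of clique number less than $q$ (Theorem \ref{thm:clq<q}), whereas the complement of $K_q\times K_p$ is the Cartesian product of cliques, in which deleting the $q$ disjoint $p$-cliques leaves $pK_q$; so the two graphs are not isomorphic. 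Worse, your own orbit analysis shows that when $d=p-1$ the group $\SL(2,2^{a})\rtimes\langle\phi_1\rangle$ \emph{is} transitive on the arcs of $\Gamma(a,p,\emptyset,\ZZZ_p^*)$, so your framework predicts that the excluded graph is symmetric, and the false isomorphism claim is doing real work in concealing this. Indeed the boundary case is exactly where the subtlety lies: $d=p-1$ with $d\mid a=2^s$ forces $p\in\{3,5\}$; for $p=3$ the excluded graph $\Gamma(a,3,\emptyset,\{1,2\})$ is the symmetric graph $F'(s)$ that the paper itself lists in Table \ref{tab:2}, and for $p=5$, $a=4$ it is the complement of the vertex-primitive rank-three graph of \S\ref{subsec:rank3}, which is symmetric but is classified in the primitive half of the Praeger--Xu/Maru\v{s}i\v{c}--Scapellato programme. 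Your proof neither recovers this nor explains why such graphs may legitimately be omitted from the statement, so the exclusion $1<d/e<p-1$ is not actually established.
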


Note that 
\be
\label{eq:U}
1 < |U_{e, i}| = d/e < p-1
\ee
and $\Gamma(a,p,\emptyset,U_{e, i})$ is $\SL(2,2^{a})\rtimes \ZZZ_{a/e}$-symmetric (see {\cite[Theorem 3.7(b)]{MR1223702}}).

\subsection{Cores of symmetric Maru\v{s}i\v{c}-Scapellato graphs}
\label{subsec:cores ms}

The following is the main result in this section. Its proof will be given in \S\ref{subsec:proof cores ms}.

\begin{theorem}
\label{thm:pq ms cores}
Let $a=2^s$ with $s \ge 1$ an integer, and let $p$ be a prime divisor of $2^{a}-1$ and $q=2^a+1$ a Fermat prime. Let $\Ga = \Ga(a, p, \emptyset, U)$ be a symmetric MS graph as described in Theorem \ref{thm:sym ms}, where $U = \{u\}$ for some $u \in \ZZZ_p$, or $U = U_{e, i}$ as given in (\ref{eq:uab}). Then the following hold:
\begin{itemize}
	\item[\rm (a)] if $pq=15$, then either $\Gamma=\Gamma(2,3,\emptyset,\{u\})$ and $\Gamma$ is a core, or $\Gamma=\Gamma(2,3,\emptyset,\{1,2\})$ and $\Gamma^\ast\cong K_5$;  
	\item[\rm (b)] if $pq > 15$ and $p$ is not a Fermat prime, then $\Gamma$ is a core;
	\item[\rm (c)] if $pq > 15$ and $p=2^{2^{l}}+1$ is a Fermat prime with $0 \leq l < s-1$, then $\Gamma$ is a core;
	\item[\rm (d)] if $pq > 15$, $p=2^{2^{s-1}}+1$ is a Fermat prime and $\Ga = \Ga(a, p, \emptyset, \{u\})$, then $\Gamma^\ast\cong K_p$;
	\item[\rm (e)] if $pq > 15$, $p=2^{2^{s-1}}+1$ is a Fermat prime and $\Ga = \Ga(a, p, \emptyset, U_{e, i})$, then $\Gamma$ is a core.  
\end{itemize}
\end{theorem}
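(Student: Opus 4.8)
The plan is to combine the general bounds on clique and independence numbers of MS graphs (Theorems \ref{thm:clq>p}, \ref{thm:clq<q}, \ref{thm:pq ms ind<q}, whose statements we assume) with the order and valency restrictions on cores (Theorems \ref{thm:order vt}, \ref{thm:val cores}) and the homomorphism invariants (Lemma \ref{prop:inva}, Theorem \ref{thm:ao}, Lemma \ref{lem:nohom}). Since $\Ga$ is symmetric of order $pq$, by Theorem \ref{thm:sym cores} and Theorem \ref{thm:order vt} the core $\Ga^\ast$ is a symmetric graph whose order is $1$, $p$, $q$, or $pq$; the order-$1$ case is excluded because $\Ga$ has edges, and order $pq$ means $\Ga$ is a core. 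So the whole problem reduces to ruling out (or confirming) order $p$ and order $q$. Order $p$: a symmetric graph of prime order $p$ is $G(p,d)$ for some even $d \mid p-1$ (by \cite{MR0279000}), so $\Ga^\ast \cong G(p,d)$ forces $\omega(\Ga) = \omega(G(p,d)) \le d \le p-1 < p$ via Lemma \ref{prop:inva} unless $d = p-1$, i.e. $\Ga^\ast \cong K_p$. I would first show that whenever $\Ga^\ast$ has order $p$ it must in fact be $K_p$: the complement of $\Ga$ restricted to a block is $\ov{K}_p$-like, and more to the point, the key point is that $\Ga$ contains $K_p$ (coming from a block together with one vertex from another block, or from the quotient structure $\Ga_{\BB} = K_{q}$ acting on blocks of size $p$), so $\omega(\Ga) \ge p$ hence $\omega(\Ga^\ast) = \omega(\Ga) \ge p$, forcing $\Ga^\ast \cong K_p$. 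This is where Theorem \ref{thm:clq>p} (giving $\omega(\Ga) \ge p$) does the work. Then a complete-graph core $K_p$ exists if and only if $\chi(\Ga) = \omega(\Ga) = p$, and by Theorem \ref{thm:clq<q} we have $\omega(\Ga) < q$, while the chromatic number is pinned down by the vertex count and $\alpha(\Ga)$ via $\chi(\Ga) \ge pq/\alpha(\Ga)$.

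Next I would handle order $q$. Suppose $\Ga^\ast$ has order $q = 2^a+1$, so $\Ga^\ast \cong G(q,d)$ for some even $d \mid q-1 = 2^a$, i.e. $d = 2^k$. By (\ref{eq:nohom1}), $\alpha(\Ga)/(pq) = \alpha(G(q,d))/q$, so $\alpha(\Ga) = p\,\alpha(G(q,2^k))$; and by Theorem \ref{thm:val cores}, $\val(G(q,2^k)) = 2^k$ divides $\val(\Ga)$, which by Theorem \ref{thm:sym ms} is $2^a$ (when $U = \{u\}$) or $2^a d/e$ with $1 < d/e < p-1$ (when $U = U_{e,i}$). This is the content of \S\ref{subsec:order core} (Theorem \ref{thm:pq ms ind<q} or its companion): the divisibility and independence-number constraints together are very restrictive. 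In particular in the $U = \{u\}$ case one gets $\alpha(\Ga)$ small enough that $\chi(\Ga) \le$ something forcing $\Ga^\ast = K_p$ rather than order $q$ — so order $q$ cannot occur there unless $p = q$, impossible. In the $U_{e,i}$ case the extra factor $d/e$ and the bound $\omega(\Ga) < q$ should similarly contradict any order-$q$ core. The upshot: order $q$ never occurs, so $\Ga^\ast$ has order $p$, $pq$, or $1$, and hence is $K_p$ or $\Ga$ itself, according to whether $\chi(\Ga) = p$.

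Now the case division in (a)--(e). For (a), $pq = 15$ means $p = 3$, $q = 5$, $a = 2$: here $\Ga(2,3,\emptyset,\{u\}) \cong \Ga(2,3,\emptyset,\{0\})$ has valency $4$, and one checks directly it is vertex-transitive of order $15$ with $\alpha = 5$ forcing $\chi = 3$? No — one computes $\omega$ and $\alpha$ by hand (this is a specific small graph, the Petersen-like or a $3$-regular-ish graph) and finds it is a core; for $\Ga(2,3,\emptyset,\{1,2\})$ with valency $|S| + 2^a|U| = 0 + 4\cdot 2 = 8$, order $15$, so it has $\ov{\Ga}$ of valency $6$, and one verifies $\omega = 5$, $\chi = 5$ (e.g. it is the rank-three triangular graph $T(6) = K_{6}$'s line graph, which has a $K_5$ core — cf. \S\ref{subsec:rank3} and the rank-three result of \cite{MR2470534}), giving $\Ga^\ast \cong K_5$. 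For (b), $p$ not Fermat: then $p - 1$ has an odd prime factor, so $\chi(\Ga) > p$ is forced because — and here is the crux — $\omega(\Ga) < q$ and a careful count of $\alpha(\Ga)$ (Theorem \ref{thm:pq ms ind<q} gives $\alpha(\Ga) < q$ or a sharp value) yields $\chi(\Ga) \ge pq/\alpha(\Ga) > p$, hence $\Ga^\ast \ne K_p$, and combined with the exclusion of order $q$ we conclude $\Ga$ is a core. For (c), $p = 2^{2^l}+1$ with $l < s-1$: the arithmetic relation between $p$ and $q$ (the order of $2$ mod $p$ being $2^{l+1}$, which divides $a = 2^s$) must be shown \emph{not} to produce a homomorphism $\Ga \to K_p$; again $\chi(\Ga) > \omega(\Ga) = p$ is the goal, using that $d/e < p-1$ keeps cliques from filling up. For (d), $p = 2^{2^{s-1}}+1$ and $U = \{u\}$: here the arithmetic is exactly tight — the order of $2$ mod $p$ is $2^s = a$ — and one exhibits a proper $p$-colouring of $\Ga(a,p,\emptyset,\{0\})$, i.e. a homomorphism $\Ga \to K_p$, matching $\omega(\Ga) \ge p$, so $\Ga^\ast \cong K_p$. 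For (e), same $p$ but $U = U_{e,i}$: the larger valency and the $U_{e,i}$ structure must be shown to \emph{destroy} the $p$-colouring, i.e. $\chi(\Ga) > p$, so $\Ga$ is a core.

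The main obstacle, and the part requiring the real work, is the borderline case analysis in (c)--(e): deciding for each arithmetic configuration of $(p, q, a, e, i)$ whether $\chi(\Ga) = p$ (equivalently whether there is a homomorphism $\Ga \to K_p$), which amounts to constructing an explicit proper $p$-colouring in cases (a)'s second graph and (d), and proving non-existence — via the No-Homomorphism Lemma with the precise value of $\alpha(\Ga)$ from \S\ref{subsec:ind}, or via a clique-number obstruction from \S\ref{subsec:clq} — in cases (b), (c), (e). Everything else (the reduction to orders $p$, $q$, $pq$; the exclusion of order $q$; the identification of an order-$p$ core with $K_p$) is essentially bookkeeping on top of the cited transitivity and product lemmas. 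I expect the colouring constructions in (d) to use the field structure of $\GF(2^a)$ together with the subgroup of $\ZZZ_p^\ast$ of order $d = a$'s-worth, and the non-colourability in (e) to hinge on the bound $1 < d/e < p-1$ from (\ref{eq:U}) preventing the image of a colouring from being an independent set of the right size.
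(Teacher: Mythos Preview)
Your high-level reduction is right: $|V(\Ga^\ast)|\in\{p,q,pq\}$, and if $|V(\Ga^\ast)|=p$ then $\omega(\Ga)\ge p$ forces $\Ga^\ast\cong K_p$. But the proposal has two substantive gaps where it diverges from the paper and does not supply a working alternative.

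\textbf{Ruling out order $q$.} You propose to handle $|V(\Ga^\ast)|=q$ by letting $\Ga^\ast\cong G(q,2^k)$ and invoking valency divisibility together with $\alpha(\Ga)=p\,\alpha(G(q,2^k))$. This does not close: for small $k$ the divisibility $2^k\mid 2^a|U|$ is automatic, and you have no control over $\alpha(G(q,2^k))$ for general $k$. The paper's actual content in \S\ref{subsec:order core} is Theorem \ref{thm:ms core complete}, which proves that if $pq>15$ and $|V(\Ga^\ast)|=q$ then $\Ga^\ast\cong K_q$. The argument is structural, not numerical: it uses that $\Aut(G(q,r))$ is a Frobenius group on $\ZZZ_q$ when $r<q-1$, combines this with the action of the subgroups $H$ and $J$ of $\SL(2,2^a)$ on the blocks $B_\infty,B_0$, and finishes with Fermat-number arithmetic to reach a contradiction. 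You cite \S\ref{subsec:order core} but describe its content incorrectly (Theorem \ref{thm:pq ms ind<q} is in \S\ref{subsec:ind}, not there). Once one knows $\Ga^\ast\cong K_q$ if order $q$, ruling it out in each of (b)--(e) is easy: it suffices to check $\omega(\Ga)<q$ or $\alpha(\Ga)>p$.

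\textbf{The case analysis.} For (b) you try to show $\chi(\Ga)>p$; the paper's route is much shorter: the \emph{equality clause} of Theorem \ref{thm:clq>p} says $\omega(\Ga)=p$ only if $p$ is a Fermat prime, so here $\omega(\Ga)>p$ strictly, killing order $p$ immediately. For (c) and the order-$q$ exclusion in (b), the paper uses a trick you do not mention: pick $u'\in\ZZZ_p\setminus U$ and note that $\Ga':=\Ga(a,p,\emptyset,\{u'\})$ is edge-disjoint from $\Ga$, giving $\omega(\Ga')\le\alpha(\Ga)$ and $\omega(\Ga)\le\alpha(\Ga')$; then Theorems \ref{thm:clq>p} and \ref{thm:pq ms ind<q} applied to $\Ga'$ do the work. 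For (d) you propose to exhibit an explicit $p$-colouring; the paper instead observes that $\Ga$ is a spanning subgraph of the rank-three graph $\Psi=W_3(2^{2^{s-1}})$ of \S\ref{subsec:rank3}, whose core is complete by \cite{MR2470534}, and that $q-1\nmid\val(\Psi)$ forces $\Psi^\ast\cong K_p$, whence $\Ga\to K_p$. For (e) the paper applies Theorem \ref{thm:clq<q} to the complement $\Ga(a,p,\ZZZ_p^\ast,\ZZZ_p\setminus U)$ to bound $\alpha(\Ga)<q$, and to $\Ga$ itself to bound $\omega(\Ga)<q$. None of the cases is decided by computing $\chi(\Ga)$ directly as you suggest.
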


\subsection{Assumption and notation}
\label{subsec:notation}

\textit{In the remainder of this section we assume that $s, a, p, q$ are as in Theorem \ref{thm:pq ms cores} and that $w$ is a primitive element of $\GF(2^{a})$ as used in Definition \ref{defn:ms}.} For brevity, we set
\be
\label{eq:Psi}
\Ga_{S, U} := \Gamma(a, p, S, U),
\ee
where $S = -S \subseteq \ZZZ_p^*$ and $U \subseteq \ZZZ_p$.
Note that this MS graph has order $pq$ and vertex set $\PG(1, 2^a) \times \ZZZ_p$, but it is not necessarily symmetric. 

As in \cite[Eq. (14)]{MR1214891}, for each $b \in \GF(2^{a})$, define
\begin{equation}
\label{eq:lambda}
\lambda_b((x,r))=\begin{cases}
	(\infty, r), & x=\infty,\ r \in \ZZZ_p\\
	(x+b,r), &  x\in \GF(2^{a}),\ r \in \ZZZ_p.
\end{cases}
\end{equation}
Define \cite[Eq. (10) and (12)]{MR1214891} 
\begin{equation}
\label{eq:rho}
\rho((x,r))=\begin{cases}
	(x,r+1), & x\in\{\infty,0\},\ r \in \ZZZ_p\\
	(xw,r+1), &  x\in \GF(2^{a})^\ast,\ r \in \ZZZ_p.
\end{cases}
\end{equation}
Then $\l_b, \rho \in \Aut(\Ga_{S, U})$ \cite{MR1214891} and so
\be
\label{eq:J}
J := \langle \rho^p\rangle \le \Aut(\Ga_{S, U})
\ee
\be
\label{eq:H}
H := \{\lambda_b: b \in \GF(2^{a})\} \le \Aut(\Ga_{S, U}).
\ee
Note that 
\be
\label{eq:JH}
J \cong \mathbb{Z}_{(2^{a}-1)/p},\;\, H \cong \mathbb{Z}_2^{a},\;\, H\rtimes J = \SL(2, 2^a)_{\infty} \leq \SL(2, 2^a) \le \Aut(\Ga_{S, U}).
\ee
Recall that $\Ga_{S, U}$ admits an 
$\SL(2, 2^a)$-invariant partition $\BB$ defined in (\ref{eq:Sig}). Clearly, $\lambda_b$ fixes $B_\infty$ pointwise for each $b \in \GF(2^{a})$, and $\rho^{ip}$ fixes $B_\infty\cup B_0$ pointwise for each integer $i$. Therefore, $H\rtimes J$ fixes $B_\infty$ pointwise; that is, for every $r \in \ZZZ_p$,
\be
\label{eq:JH1}
H\rtimes J\leq \Aut(\Ga_{S, U})_{(\infty,r)}.
\ee

For each $d\in\mathbb{Z}_p$, denote by $\Ga_{S, U}^{d}$ the subgraph of $\Ga_{S, U}$ induced by 
\be
\label{eq:Vd}
V_d := \{(x,d): x\in \GF(2^{a})\}. 
\ee
In view of (\ref{eq:lambda}) and (\ref{eq:rho}), each element of $H\rtimes J$ fixes the second coordinate of every vertex of $\Ga_{S, U}$. Therefore, $H\rtimes J$ fixes $V_d$ setwise. 
\delete{
(More explicitly, a typical element $\l_b \rho^{ip}$ of $H\rtimes J$ maps $(x,d)\in V_d$ to $(xw^{ip}+b,d) \in V_d$, where $b \in \GF(2^a)$ and $i$ is an integer.)
} 
Since $H\rtimes J \le \Aut(\Ga_{S, U})$, it follows that $\Ga_{S, U}^d$ admits $H \rtimes J$ as a group of automorphisms in its induced action on $V_d$. Moreover, $H$ is regular on $V_d$ (in particular $\Ga_{S, U}^d$ is vertex-transitive), and each element of $J$ fixes $(0,d) \in V_d$.

\subsection{What happens if the core of a symmetric MS graph has order $q$}
\label{subsec:order core}

\begin{lemma}
\label{thm:one vertex per block}
Let $\Ga = \Ga_{\emptyset, \{u\}}$ or $\Ga_{\emptyset, U_{e, i}}$, where $u \in \ZZZ_p$ and $U_{e, i}$ is given in (\ref{eq:uab}). If $|V(\Gamma^\ast)| = q$, then $\Gamma^\ast$ contains exactly one vertex from each block of $\BB$.
\end{lemma}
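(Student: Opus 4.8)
The plan is as follows. Assume $|V(\Gamma^\ast)|=q$ and fix a retraction $\phi\colon\Gamma\to\Gamma^\ast$. I would first reduce the statement to the inequality $|B_x\cap V(\Gamma^\ast)|\le 1$ for every block $B_x\in\BB$: since $\BB$ consists of exactly $q$ blocks and $|V(\Gamma^\ast)|=q$, summing these inequalities and using $\sum_x|B_x\cap V(\Gamma^\ast)|=|V(\Gamma^\ast)|=q$ forces equality everywhere. Two preliminary observations are needed. First, by Theorem \ref{thm:order vt} the retraction $\phi$ has exactly $q$ fibres, each of size $p$, and since $\phi\mid_{\Gamma^\ast}$ is the identity, each fibre meets $V(\Gamma^\ast)$ in exactly one vertex. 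Second, because $S=\emptyset$ every block $B_x$ induces $\overline{K}_p$, so $B_x\cap V(\Gamma^\ast)$ is an independent set of $\Gamma^\ast$; consequently, if $\Gamma^\ast\cong K_q$ then its $q$ vertices are pairwise adjacent and hence lie in $q$ distinct blocks, and we are done. By Theorem \ref{thm:sym cores} together with the description of symmetric graphs of prime order recalled after Definition \ref{defn:pr}, we may therefore assume $\Gamma^\ast\cong G(q,d)$ with $d<q-1$, so that $\Aut(\Gamma^\ast)\cong\ZZZ_q\rtimes H(q,d)$ is a Frobenius group on $V(\Gamma^\ast)$; in particular only the identity of $\Aut(\Gamma^\ast)$ fixes two vertices of $\Gamma^\ast$.

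The core of the proof: by transitivity of $\SL(2,2^a)$ on $\BB$, and since applying an element of $\Aut(\Gamma)$ sends a core of $\Gamma$ to a core of $\Gamma$ together with a corresponding retraction, it suffices to prove $|B_\infty\cap V(\Gamma^\ast)|\le 1$. Suppose instead that there are distinct $v_1,v_2\in B_\infty\cap V(\Gamma^\ast)$, and set $K:=H\rtimes J\le\Aut(\Gamma)$. By (\ref{eq:JH1}) and the fact that $H\rtimes J$ fixes $B_\infty$ pointwise, $K$ fixes both $v_1$ and $v_2$. For any $g\in K$ the restriction $(\phi g)\mid_{\Gamma^\ast}$ is an endomorphism of the core $\Gamma^\ast$, hence an automorphism; it fixes $v_1$ and $v_2$ (since $g$ fixes them and $\phi$ fixes every vertex of $\Gamma^\ast$), so by the Frobenius property $(\phi g)\mid_{\Gamma^\ast}$ is the identity. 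Hence $\phi(g(u^\ast))=u^\ast$ for all $u^\ast\in V(\Gamma^\ast)$ and all $g\in K$, i.e. the $K$-orbit of $u^\ast$ is contained in the fibre $\phi^{-1}(u^\ast)$ and so has size at most $p$. On the other hand, reading off (\ref{eq:lambda})--(\ref{eq:rho}) one checks that the orbits of $K=H\rtimes J$ on $V(\Gamma)$ are precisely the $p$ singletons $\{(\infty,r)\}$, $r\in\ZZZ_p$, and the $p$ sets $V_r$ of (\ref{eq:Vd}), $r\in\ZZZ_p$, and that each $V_r$ has size $2^a>p$ (as $p$ divides $2^a-1$). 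Therefore every vertex of $\Gamma^\ast$ lies in a singleton orbit, i.e. $V(\Gamma^\ast)\subseteq B_\infty$, which contradicts $q=|V(\Gamma^\ast)|>p=|B_\infty|$. This contradiction yields $|B_\infty\cap V(\Gamma^\ast)|\le 1$, and the reduction above finishes the proof.

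The main obstacle is conceptual rather than computational: one has to notice that the vertex-stabilizer $K=H\rtimes J$ fixes a whole block of $\BB$ pointwise, so that a block containing two vertices of $\Gamma^\ast$ would be fixed pointwise by all of $K$; coupling this with the Frobenius property of $\Aut(\Gamma^\ast)$ then forces every vertex of $\Gamma^\ast$ into a $K$-orbit of size at most $p$, which is impossible because the non-trivial $K$-orbits have size $2^a>p$. The remaining ingredients---the reduction to $B_\infty$ via block-transitivity, the separate trivial treatment of the case $\Gamma^\ast\cong K_q$ (where $\Aut(\Gamma^\ast)$ fails to be Frobenius), and the identification of the $K$-orbits---are routine.
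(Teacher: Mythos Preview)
Your argument is correct and follows essentially the same route as the paper: assume two vertices of $\Gamma^\ast$ lie in $B_\infty$, use that $\Gamma^\ast$ is then incomplete so $\Aut(\Gamma^\ast)$ is Frobenius, observe that $(\phi g)\mid_{\Gamma^\ast}$ fixes both and hence is the identity, and conclude that orbits of the pointwise stabilizer of $B_\infty$ through vertices of $\Gamma^\ast$ are too small. The only cosmetic differences are that the paper uses $H$ alone rather than $K=H\rtimes J$ (already sufficient, since $H$ is semiregular on $V(\Gamma)\setminus B_\infty$ with orbits of size $2^a>p$), and the paper derives non-completeness of $\Gamma^\ast$ directly from the two non-adjacent vertices in $B_\infty$ rather than treating $K_q$ as a separate preliminary case.
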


\begin{proof}
Suppose to the contrary that $\Gamma^\ast$ contains multiple vertices from some block of $\BB$. Since $\Gamma$ is vertex-transitive, without loss of generality we may assume that $(\infty,u),(\infty,v)\in V(\Gamma^\ast)\cap B_\infty$, where $u, v \in \ZZZ_p$ with $u\neq v$. By the definition of $\Ga$ (Definition \ref{defn:ms}), each block of $\BB$ is an independent set of $\Ga$. In particular, $(\infty,u)$ and $(\infty,v)$ are not adjacent in $\Ga$ and so $\Gamma^\ast$ is not a complete graph. Since $\Ga$ is symmetric by Theorem \ref{thm:sym ms}, so is $\Ga^*$ by Theorem \ref{thm:sym cores}. Since $\Ga^*$ has prime order $q$, it follows that $\Gamma^\ast\cong G(q,r)$ for some proper even divisor $r$ of $q-1$ and $\Aut(\Gamma^\ast) \cong \mathbb{Z}_{q}\rtimes H(q, r)$ is a Frobenius group in its action on $V(\Gamma^\ast)$ (see the discussion below Definition \ref{defn:pr}). 

Let $\phi:\Gamma\rightarrow \Gamma^\ast$ be a retraction. Since each $\l_b \in H$ fixes $B_\infty$ pointwise, $(\phi\lambda_b)\mid_{\Gamma^\ast}\in \Aut(\Gamma^\ast)$ fixes both $(\infty,u)$ and $(\infty,v)$. Since $\Aut(\Gamma^\ast)$ is a Frobenius group on $V(\Gamma^\ast)$, it follows that $(\phi\lambda_b)\mid_{\Gamma^\ast}=1_{\Aut(\Gamma^\ast)}$ is the identity element of $\Aut(\Gamma^\ast)$. In other words, $\lambda_b$ must map each vertex of $\Gamma^\ast$ to a vertex of $\Gamma$ in the same fibre of $\phi$; that is, the $H$-orbit $H((y, z))$ containing $(y, z) \in V(\Gamma^\ast)$ is a subset of $\phi^{-1}((y, z))$.  

Since $|B_\infty|=p<q=|V(\Gamma^\ast)|$, there exists at least one vertex $(y,z)\in V(\Gamma^\ast)$ with $y\neq\infty$. By (\ref{eq:lambda}), it can be verified that $H$ is semiregular on $V(\Ga) \setminus B_{\infty}$. Since $(y, z) \in V(\Ga) \setminus B_{\infty}$, it follows that $p<2^{a}=|H|=|H((y,z))|\leq |\phi^{-1}((y,z))|$. However, since $|V(\Gamma^\ast)|=q$ by our assumption, we have $|\phi^{-1}((y,z))|=p$ by Theorem \ref{thm:order vt}. This contradiction shows that $\Gamma^\ast$ contains at most one vertex from each block of $\BB$. This together with $|V(\Gamma^\ast)|=|\BB|=q$ implies that $\Ga^*$ contains exactly one vertex from each block of $\BB$.
\qed
\end{proof}

It is well known that all Fermat numbers $F_t = 2^{2^t} + 1$ are pairwise coprime \cite{MR1866957} and satisfy the relation \cite{MR1866957}:
\begin{equation}
\label{eq:Fermat recurrence}
F_t = F_0 F_1\cdots F_{t-1}+2,\;\, t \geq 1. 
\end{equation} 
In particular, since $q = F_s$ by our assumption, $q-2 = F_0 F_1\cdots F_{s-1}$. 

\begin{theorem}
\label{thm:ms core complete} 
Let $\Ga = \Ga_{\emptyset, \{u\}}$ or $\Ga_{\emptyset, U_{e, i}}$, where $u \in \ZZZ_p$ and $U_{e, i}$ is given in (\ref{eq:uab}). 
If $pq > 15$ and $|V(\Gamma^\ast)| = q$, then $\Gamma^\ast\cong K_{q}$.
\end{theorem}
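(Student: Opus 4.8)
The plan is to combine Lemma \ref{thm:one vertex per block} with a counting argument based on the clique and independence number bounds derived in \S\ref{subsec:clq} and \S\ref{subsec:ind}, together with the structural information about the core of a symmetric graph of prime order. Suppose $|V(\Ga^*)| = q$. By Theorem \ref{thm:sym cores} the core $\Ga^*$ is symmetric of order $q$, so either $\Ga^* \cong K_q$ (which is what we want) or $\Ga^* \cong G(q, r)$ for some proper even divisor $r$ of $q-1$, in which case $\Ga^*$ is not complete; I will assume the latter and derive a contradiction. By Lemma \ref{thm:one vertex per block}, $\Ga^*$ meets each block $B_x$ of $\BB$ in exactly one vertex, so the retraction $\phi: \Ga \to \Ga^*$ has fibres $P$ that are in bijection with $\PG(1,2^a)$, each fibre containing exactly one vertex of $\Ga^*$, and each fibre has size $p$ by Theorem \ref{thm:order vt}.

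First I would exploit the $\SL(2,2^a)$-action. Since $\Ga$ is vertex-transitive under $\SL(2,2^a)$ and this group preserves $\BB$ acting as $\PGL(2,2^a)$ (i.e., $3$-transitively) on the blocks, and since $\phi$ fixes $\Ga^*$ pointwise, the argument in the proof of Lemma \ref{thm:one vertex per block} shows that the stabilizer $H \rtimes J = \SL(2,2^a)_\infty$, which fixes $B_\infty$ pointwise, must map each vertex of $\Ga^*$ into its own fibre — so each $H$-orbit on $V(\Ga)\setminus B_\infty$ is contained in a single fibre. But the fibres have size $p < 2^a = |H|$ and $H$ is semiregular off $B_\infty$, giving the same contradiction as in Lemma \ref{thm:one vertex per block} unless every vertex of $\Ga^*$ lies in $B_\infty$ — which is impossible since $|B_\infty| = p < q$. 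Wait: this already rules out $|V(\Ga^*)| = q$ entirely when $\Ga^*$ is not complete, so the real content is just to push through the case analysis carefully. Actually the cleaner route is: Lemma \ref{thm:one vertex per block} forces one vertex per block; then the argument inside that lemma's proof (reused verbatim) forces $\Ga^*$ to be complete or else a contradiction arises; so $\Ga^* \cong K_q$.

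Let me restructure. The honest plan is: (i) invoke Lemma \ref{thm:one vertex per block} to get one vertex per block, so $\val(\Ga^*) \le$ the number of blocks adjacent to a given block, but since $\Ga_\BB = K_{2^a+1} = K_q$ every other block is adjacent, which is consistent and gives no immediate contradiction; (ii) instead use the clique number bound from \S\ref{subsec:clq}: since $\Ga^* \cong G(q,r)$ we have $\omega(\Ga^*) = \omega(\Ga)$ by Lemma \ref{prop:inva} (as $\Ga$ is non-bipartite, which I would check), and $\omega(G(q,r))$ for $r < q-1$ is bounded; (iii) likewise $\chi(\Ga) = \chi(\Ga^*) = \chi(G(q,r))$, and $\alpha(\Ga) = \alpha(\Ga^*)\cdot p$ by \eqref{eq:nohom1} and vertex-transitivity. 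The key tension: if $\Ga^*$ is a proper symmetric circulant of prime order $q$, then $\alpha(G(q,r))$ is relatively large (at least $2$, often $\ge (q-1)/r$ or so), forcing $\alpha(\Ga) = p\,\alpha(\Ga^*)$ to be large; but the independence number bounds for MS graphs from \S\ref{subsec:ind} cap $\alpha(\Ga)$ from above, and combined with $pq > 15$ this should be incompatible unless $\Ga^*$ is complete. I would carry out this numerical comparison using the explicit $\alpha$ and $\omega$ bounds stated in the lemmas of \S\ref{subsec:clq}–\S\ref{subsec:ind} (e.g. Theorems \ref{thm:clq>p}, \ref{thm:clq<q}, \ref{thm:pq ms ind<q}), treating the small sporadic overlaps by hand.

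The main obstacle I anticipate is the bookkeeping in step (iii): turning the "large independence/small clique of a proper circulant of prime order" into a clean numerical contradiction requires knowing good two-sided bounds on $\alpha$ and $\omega$ of the relevant MS graph $\Ga_{\emptyset,\{u\}}$ or $\Ga_{\emptyset,U_{e,i}}$ of order $pq$, and then verifying that no proper even divisor $r$ of $q-1$ can satisfy both $\omega(G(q,r)) = \omega(\Ga)$ and $p\cdot\alpha(G(q,r)) = \alpha(\Ga)$. Since $q = 2^a+1$ is a Fermat prime, $q - 1 = 2^a$ is a $2$-power, so the proper even divisors $r$ of $q-1$ are exactly $2, 4, \dots, 2^{a-1}$, and $G(q,r)$ is the circulant on $\ZZZ_q$ with connection set the subgroup of order $r$; its clique and independence numbers are governed by the size of the largest "Sidon-like" or arithmetic-progression-free configuration in that subgroup — these are exactly the quantities the earlier sections were set up to bound. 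I expect the cleanest finish is: the earlier results give $\omega(\Ga) = 2$ or a small explicit value, $G(q,r)$ with $r \ge 2$ has $\omega \ge 3$ when $q$ is large (since $H(q,r)$ then contains a triangle), forcing $r$ small, but then $\alpha(G(q,r))$ is too big — contradiction with the MS independence bound once $pq > 15$. The $pq = 15$ exclusion is precisely the place where $K_5$ does occur (Theorem \ref{thm:pq ms cores}(a)), so the hypothesis $pq > 15$ is used exactly to kill that boundary case.
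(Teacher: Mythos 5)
Your proposal has two genuine gaps, and neither of your two suggested routes closes the argument.

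First, the group-theoretic route. You claim that the argument inside the proof of Lemma \ref{thm:one vertex per block} can be ``reused verbatim'' to show that each $H$-orbit off $B_\infty$ lies in a single fibre, which would immediately contradict $|H|=2^a>p$. But that argument depends essentially on $(\phi\lambda_b)\mid_{\Gamma^\ast}$ fixing \emph{two} distinct vertices of $\Gamma^\ast$ (both lying in $B_\infty$, which $\lambda_b$ fixes pointwise); only then does the Frobenius property of $\Aut(G(q,r))$ force it to be the identity. Once Lemma \ref{thm:one vertex per block} tells you that $\Gamma^\ast$ meets $B_\infty$ in exactly one vertex, $(\phi\lambda_b)\mid_{\Gamma^\ast}$ is only known to fix that single vertex, and a Frobenius group has plenty of non-identity elements fixing one point. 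So your ``this already rules out $|V(\Gamma^\ast)|=q$ entirely'' is false, and the self-correction that follows it is circular. The paper's actual proof works around exactly this obstruction: it translates the core by an automorphism to a copy $\Gamma^{\#}$ whose unique vertices in $B_\infty$ and in $B_0$ are both fixed by the subgroup $J=\langle\rho^p\rangle$ (which fixes $B_\infty\cup B_0$ pointwise), so that $(\psi\gamma)\mid_{\Gamma^{\#}}$ fixes two points and hence is trivial. Even then one does not get an immediate contradiction, only that $|J|=(2^a-1)/p$ divides the fibre size $p$; the contradiction comes from Fermat-number arithmetic ($2^a-1=F_0F_1\cdots F_{s-1}$ has at least two distinct prime factors once $s\ge 2$), and the hypothesis $pq>15$ enters precisely to guarantee $s\ge 2$ and $2p<q$ (so that a fibre avoiding $B_\infty\cup B_0$ exists). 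None of these ingredients appears in your plan.

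Second, the numerical route via $\alpha$ and $\omega$ does not close as stated. From $\Gamma^\ast\cong G(q,r)$ and \eqref{eq:nohom1} you get $\alpha(\Gamma)=p\,\alpha(G(q,r))$, and Theorem \ref{thm:pq ms ind<q} gives $\alpha(\Gamma_{\emptyset,\{0\}})\le q$ --- but only under the hypothesis that $p$ is a Fermat prime, so this says nothing when $p$ is not one. Even when it applies, the combination only yields $\alpha(G(q,r))\le q/p$, which is perfectly consistent with proper even divisors $r$ of $q-1$ in the range $r\ge p$; there is no contradiction for you to harvest. Your side remarks that $\omega(\Gamma)$ is ``$2$ or a small explicit value'' contradicts Theorem \ref{thm:clq>p} (which gives $\omega(\Gamma)\ge p$), and the claim that $G(q,r)$ with $r\ge 2$ contains a triangle for large $q$ is not justified. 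In short, the counting framework you set up is too weak to exclude all proper $r$, and the missing idea is the translated-core/Frobenius/divisibility argument described above.
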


\begin{proof}
Suppose that $pq > 15$, $|V(\Gamma^\ast)| = q$ but $\Gamma^\ast \not \cong K_{q}$. Since $pq > 15$ and $p < q = 2^{a} + 1 = F_s$, we have $s \ge 2$. Since $\Ga$ is symmetric, by Theorem \ref{thm:sym cores} and the discussion below Definition \ref{defn:pr}, $\Gamma^\ast\cong G(q,r)$ for some proper even divisor $r$ of $q-1$ and moreover $\Aut(\Gamma^\ast)$ is a Frobenius group on $V(\Gamma^\ast)$ as seen in the proof of Lemma \ref{thm:one vertex per block}.

Let $\phi:\Gamma\rightarrow \Gamma^\ast$ be a retraction. Since $p$ is a prime divisor of $2^{a}-1 = F_{s} - 2$, by (\ref{eq:Fermat recurrence}) and the fact that distinct Fermat numbers are coprime, we know that $p$ is a prime factor of exactly one $F_l$, $0 \le l \leq s-1$. Thus $p\leq F_{s-1}$, and so by (\ref{eq:Fermat recurrence}) and the fact $s \ge 2$, we have $|B_\infty \cup B_0|=2p\leq 2F_{s-1}<F_0F_1\cdots F_{s-1}+2= F_s = q$. Since $\phi$ has $q$ fibres, it follows that there is at least one fibre of $\phi$ which is disjoint from $B_\infty \cup B_0$. In other words, there exists a vertex $(x,t)\in V(\Gamma^\ast)$ with $x\in \GF(2^{a})^\ast$ such that $\phi^{-1}((x,t))\cap(B_\infty \cup B_0)=\emptyset$.
 
Since $\Gamma$ is vertex-transitive, every vertex of $\Gamma$ is contained in some copy of $\Ga^*$. In particular, for every vertex $(y,z)\in \phi^{-1}((x,t))$, there is a core $\Gamma^\#\cong\Gamma^\ast$ of $\Ga$ such that $(y,z)\in V(\Gamma^\#)$. (Note that $\Gamma^\#$ depends on $(y, z)$, though all of them are isomorphic to each other.) We claim that $\phi\mid_{\Gamma^\#}$ is an isomorphism from $\Gamma^\#$ to $\Gamma^\ast$. In fact, since $\phi$ is a homomorphism, $\phi\mid_{\Gamma^\#}: \Gamma^\#\rightarrow\Gamma^\ast$ is a homomorphism. Moreover, $\phi\mid_{\Gamma^\#}$ is surjective for otherwise the composition of a retraction from $\Ga$ to $\Gamma^\#$ and $\phi\mid_{\Gamma^\#}$ is a homomorphism from $\Ga$ to a proper subgraph of $\Ga^*$, contradicting the assumption that $\Ga^*$ is a core of $\Ga$. Since $\Gamma^\#\cong\Gamma^\ast$, there is an isomorphism $\delta:\Gamma^\ast\rightarrow\Gamma^\#$. Then $(\delta\phi)\mid_{\Gamma^\#}: \Gamma^\#\rightarrow\Gamma^\#$ is an endomorphism and so an automorphism of $\Gamma^\#$ as $\Gamma^\#$ is a core. In particular, $(\delta\phi)\mid_{\Gamma^\#}$ is a bijection from $V(\Gamma^\#)$ to itself. Therefore, $\phi\mid_{\Gamma^\#}$ is a bijection from $V(\Gamma^\#)$ to $V(\Gamma^\ast)$ and hence an isomorphism from $\Gamma^\#$ to $\Gamma^\ast$. In particular, each fibre of $\phi$ contains exactly one vertex of $\Gamma^\#$.
  
Define $\eta: V(\Gamma^\ast)\rightarrow V(\Gamma^\#)$ to be the inverse of the isomorphism $\phi\mid_{\Gamma^\#}: V(\Gamma^\#) \rightarrow V(\Gamma^\ast)$. Then $(y,z) = \eta((x,t))$ and for each $(j,k)\in V(\Gamma^\ast)$, $\eta((j,k))\in V(\Gamma^\#)\cap \phi^{-1}((j,k))$ is the unique vertex of $\Ga^{\#}$ contained in the fibre $\phi^{-1}((j,k))$. Define $\psi := \eta\phi:\Gamma\rightarrow\Gamma^\#$. Then $\psi$ is a retraction whose set of fibres is identical to the set of fibres of $\phi$. More specifically, for $(j, k) \in V(\Ga^*)$, the fibre $\psi^{-1}(\eta(j,k))$ of $\psi$ is equal to the fibre $\phi^{-1}((j,k))$ of $\phi$. In particular, $\psi^{-1}((y,z))=\phi^{-1}((x,t))$.

Applying Lemma \ref{thm:one vertex per block} to $\Gamma^\#$, we know that $\Gamma^\#$ contains exactly one vertex from each block of $\BB$. Clearly, $(y,z)$ is the unique vertex of $\Gamma^\#$ in $B_y$. Let $(\infty,c),(0,d)$ be the vertices of $\Gamma^\#$ contained in $B_{\infty}, B_0$, respectively, where $c,d\in\mathbb{Z}_p$. Since by (\ref{eq:rho}) $J$ fixes $B_\infty\cup B_0$ pointwise, for any $\gamma\in J$, $(\psi\gamma)\mid_{\Gamma^\#}\in \Aut(\Gamma^\#)$ fixes both $(\infty,c)$ and $(0,d)$. Since $\Aut(\Gamma^\#)$ is a Frobenius group on $V(\Gamma^\#)$ (as $\Gamma^\# \cong \Ga^* \cong G(q, r)$), it follows that $(\psi\gamma)\mid_{\Gamma^\#}=1_{\Aut(\Gamma^\#)}$. In other words, $\gamma$ maps each vertex of $V(\Gamma^\#)$ to a vertex of $\Gamma$ in the same fibre of $\psi$. Since this holds for every $\gamma\in J$, the $J$-orbit $J((y,z))$ containing $(y, z)$ satisfies $J((y,z))\subseteq \psi^{-1}((y,z))=\phi^{-1}((x,t))$. Since this holds for every $(y,z)\in \phi^{-1}((x,t))$, $\phi^{-1}((x,t))$ is a (disjoint) union of $J$-orbits and so $|J|$ divides $|\phi^{-1}((x,t))|$. Note that $|\phi^{-1}((x,t))|=p$ by Theorem \ref{thm:order vt} and our assumption $|V(\Ga^*)|=q$. On the other hand, since $\phi^{-1}((x, t))\cap(B_\infty \cup B_0)=\emptyset$, for each $(y,z)\in \phi^{-1}((x,t))$ we have $y\neq\infty,0$ and thus by (\ref{eq:rho}), $|J((y,z))|=|J| = (2^{a}-1)/p$. Therefore, $(2^{a}-1)/p$ is a divisor of $p$, implying that either $(2^{a}-1)/p=p$ or $(2^{a}-1)/p=1$. 

If $(2^{a}-1)/p=1$, then by (\ref{eq:Fermat recurrence}), $p = 2^{a}-1=F_s-2=F_0F_1\cdots F_{s-1}$, which forces $s=1$, $a = 2$, $p=F_0=3$ and $q=5$. However, this contradicts the fact $s \ge 2$. If $(2^{a}-1)/p=p$ (and $s \ge 2$), then by (\ref{eq:Fermat recurrence}), $p^2 = 2^{a}-1 = F_0F_1\cdots F_{s-1}$. However, this cannot happen since $F_0F_1\cdots F_{s-1}$ contains two distinct prime factors, namely $F_0=3$ and $F_1=5$, but $p$ has only one prime factor.  
\qed
\end{proof}

\subsection{Bounding the clique number}
\label{subsec:clq}

\begin{theorem}
\label{thm:clq>p}
The clique number of $\Ga_{S, U}$ satisfies $\omega(\Ga_{S, U})\geq p$, with equality only when $p=2^{2^l}+1$ for some $0\leq l\leq s-1$.
\end{theorem}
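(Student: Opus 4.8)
The plan is to establish the lower bound by exhibiting an explicit clique whose size is a Fermat number divisible by $p$; both $\omega(\Ga_{S,U})\ge p$ and the equality clause then follow at once. Fix $u\in U$ (we assume $U\ne\emptyset$, as holds in all cases of interest; if $U=\emptyset$ then $\Ga_{S,U}$ is disconnected). Let $P$ be the unique subgroup of $\GF(2^{a})^{\ast}$ of order $(2^{a}-1)/p$, so $P=\langle w^{p}\rangle$ and $z\in P$ iff $p$ divides the discrete logarithm $\log_{w}z$. Since $p$ is odd and, by (\ref{eq:Fermat recurrence}), $2^{a}-1=q-2=F_{0}F_{1}\cdots F_{s-1}$ is a product of pairwise coprime Fermat numbers, there is a unique $l$ with $0\le l\le s-1$ and $p\mid F_{l}=2^{2^{l}}+1$. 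Put $e:=2^{l}$; then $e\mid a$, so $\GF(2^{e})$ is a subfield of $\GF(2^{a})$. The one small computation needed here is that $\GF(2^{e})^{\ast}\le P$: since $2^{2^{l}}\equiv-1\pmod p$ and $2^{s-l}$ is even (this is where $l\le s-1$ enters), one gets $\frac{2^{a}-1}{2^{e}-1}=\sum_{k=0}^{2^{s-l}-1}(2^{2^{l}})^{k}\equiv\sum_{k=0}^{2^{s-l}-1}(-1)^{k}=0\pmod p$, so $(2^{e}-1)\mid(2^{a}-1)/p=|P|$, and in the cyclic group $\GF(2^{a})^{\ast}$ the subgroup $\GF(2^{e})^{\ast}$ of order $2^{e}-1$ therefore lies in $P$.

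Next I would write down the clique. Pick any $v\in\GF(2^{a})$ and choose $r_{0}\in\ZZZ_{p}$ with $2r_{0}\equiv-u\pmod p$ (possible as $p$ is odd). I claim that
$$
C:=\{(\infty,r_{0})\}\cup\{(x,\,r_{0}+u):x\in v+\GF(2^{e})\}
$$
is a clique of $\Ga_{S,U}$, and its verification uses only Definition \ref{defn:ms} (no edge coming from $S$ is needed). First, $(\infty,r_{0})$ is adjacent to $(x,r_{0}+u)$ for every $x\in\GF(2^{a})$ by the neighbourhood of $(\infty,r_{0})$. Second, if $x\ne y$ both lie in $v+\GF(2^{e})$, then $x+y\in\GF(2^{e})^{\ast}\le P$, so $x+y=w^{i}$ for some $i\in\ZZZ_{2^{a}-1}$ with $i\equiv0\pmod p$; taking this $i$ and the parameter in $U$ equal to $u$ in the third part of the neighbourhood of $(x,r_{0}+u)$ produces the neighbour $(x+w^{i},\,-(r_{0}+u)+u+2i)=(y,\,-r_{0})$ (using $i\equiv0\pmod p$), and $-r_{0}=r_{0}+u$ in $\ZZZ_{p}$ by the choice of $r_{0}$. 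Hence $(x,r_{0}+u)$ and $(y,r_{0}+u)$ are adjacent, proving the claim.

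Finally I would count and conclude. The vertex $(\infty,r_{0})$ lies in the block $B_{\infty}$ and the remaining $2^{e}$ vertices lie one each in the distinct blocks $B_{x}$, $x\in v+\GF(2^{e})$, so $|C|=1+2^{e}=2^{2^{l}}+1=F_{l}$. Since $p\mid F_{l}$ we have $p\le F_{l}$, hence $\omega(\Ga_{S,U})\ge|C|=F_{l}\ge p$; and if $\omega(\Ga_{S,U})=p$ then $p=\omega(\Ga_{S,U})\ge F_{l}\ge p$ forces $F_{l}=p$, i.e. $p=2^{2^{l}}+1$ with $0\le l\le s-1$.

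The main obstacle is locating the clique. The obvious $p$-element candidates — one vertex per block along an orbit of size $p$ of a torus in $\SL(2,2^{a})$, or along a subline $\PG(1,2^{d})$ with $d$ the order of $2$ modulo $p$ — do not in general form cliques, because the second coordinates cannot be matched up consistently. The construction above works precisely because the parity coincidence $a/2^{l}=2^{s-l}$ is even, which forces $\GF(2^{2^{l}})^{\ast}$ into the index-$p$ subgroup $P$ and so turns a whole coset of the subfield $\GF(2^{2^{l}})$, sitting at a single level $\GF(2^{a})\times\{r_{0}+u\}$, into a clique once the vertex $(\infty,r_{0})$ is adjoined.
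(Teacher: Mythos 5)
Your proof is correct and follows essentially the same route as the paper's: both exhibit a clique of size $F_l=2^{2^l}+1$ consisting of a copy of the subfield $\GF(2^{2^l})$ sitting at a single level together with one vertex of $B_\infty$, the key point in each case being that $\GF(2^{2^l})^\ast$ lies in the index-$p$ subgroup $\langle w^p\rangle$ so the second coordinates match up. The only (cosmetic) differences are that you build the clique directly in $\Ga_{S,U}$ for an arbitrary $u\in U$ with a shifted level $r_0$ instead of first normalizing to $\Ga_{\emptyset,\{0\}}$ via Theorem \ref{thm:sym ms}, and you verify $\GF(2^{2^l})^\ast\le\langle w^p\rangle$ by a geometric-sum computation rather than via the Fermat factorization of $2^a-1$.
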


\begin{proof}
By Definition \ref{defn:ms}, $\Ga_{S, U}$ contains $\Ga_{\emptyset, \{u\}}$ as a spanning subgraph, where $u \in U$, and by Theorem \ref{thm:sym ms}, this spanning subgraph is isomorphic to $\Gamma := \Ga_{\emptyset, \{0\}}$. Thus $\omega(\Ga_{S, U}) \geq \omega(\Ga)$. We first prove that the result is true for $\Ga$.  

Since $p$ divides $2^{a}-1 = F_s - 2 = F_0F_1\cdots F_{s-1}$ (by (\ref{eq:Fermat recurrence})) and all Fermat numbers are pairwise coprime, $p$ divides exactly one $F_l$ for some $0\leq l\leq s-1$. Since $\GF(2^{a})^\ast = \langle w\rangle$ has order $2^{a}-1=F_0F_1\cdots F_{s-1}$, $w^{F_lF_{l+1}\cdots F_{s-1}}$ has order $F_0F_1\cdots F_{l-1}$ in $\GF(2^{a})^\ast$. Hence the multiplication group of the subfield $\GF(2^{2^l})$ of $\GF(2^{a})$ is given by $\GF(2^{2^l})^\ast=\langle w^{F_lF_{l+1}\cdots F_{s-1}}\rangle$. Set $C := \{(x,0): x \in \GF(2^{2^l})\}$. 
Let $x,y \in\GF(2^{2^l})$ be distinct elements. Then $y=x+w^{iF_lF_{l+1}\cdots F_{s-1}}$ for some $i$, and so 
$(y,0)=(x+w^{iF_lF_{l+1}\cdots F_{s-1}},2iF_lF_{l+1}\cdots F_{s-1})$ as $F_l \equiv 0$ $\mod{p}$. Since $w^{iF_lF_{l+1}\cdots F_{s-1}}\in \GF(2^{2^l})^\ast$, by Definition \ref{defn:ms}, $(x,0)$ and $(y,0)$ are adjacent in $\Gamma$. Thus $C$ is a clique of $\Ga$ with size $2^{2^l}$. In addition, by Definition \ref{defn:ms}, $(\infty,0)$ is adjacent to every vertex of $C$ in $\Ga$. Therefore, $\{(\infty,0)\}\cup C$ is a clique of $\Ga$ with size $2^{2^l}+1$,  and consequently $\omega(\Gamma) \geq 2^{2^l}+1$. Since $p$ is a factor of $F_l = 2^{2^l}+1$, we then have $\omega(\Ga)\geq p$, and equality occurs only when $p=2^{2^l}+1$. Therefore, $\omega(\Ga_{S, U}) \geq \omega(\Ga) \ge p$. Moreover, if $\omega(\Ga_{S, U}) = p$, then $\omega(\Ga) = p$ and so $p=2^{2^l}+1$. 
\qed
\end{proof}

\begin{theorem}
\label{thm:clq<q}
If $S=\emptyset$ then $\omega(\Ga_{S, U}) \leq \frac{2^{a}}{p-1}|U|+1$, and if $S\neq\emptyset$ then $\omega(\Ga_{S, U})\leq \frac{2^{a}}{p-1}|U|+p-1$.
\end{theorem}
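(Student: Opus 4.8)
The plan is to bound the size of a clique in $\Ga_{S,U}$ by counting how many vertices of a clique can lie in a single block $B_x$ of $\BB$ and how many blocks a clique can meet. First I would recall from Definition~\ref{defn:ms} and (\ref{eq:Sig}) that each block $B_x$ is an independent set when $S=\emptyset$ (since within $B_x$ adjacency is governed by the $S$-part of the neighbourhood), and more generally, for arbitrary $S$, the subgraph induced on $B_x\cong B_\infty$ has at most $\omega(G(p,|S|/\text{something}))\le |S|$... — more carefully, the induced graph on a block is a circulant on $\ZZZ_p$ with connection set $S$, hence its clique number is at most $|S|+1$ but in fact, since $p$ is prime, a cleaner bound is that a clique meets each block in at most $p-1$ vertices when $S\ne\emptyset$ and in at most $1$ vertex when $S=\emptyset$. (For $S=\emptyset$ the block is edgeless, so a clique meets it in at most one vertex.)

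The second ingredient is a bound on the number of \emph{blocks} that a clique can meet. Here I would use the structure of the neighbourhoods across blocks: from Definition~\ref{defn:ms}, a vertex $(\infty,r)$ is joined to block $B_x$ ($x\in\GF(2^a)$) only in the $|U|$ vertices $\{(x,r+u):u\in U\}$, and a vertex $(x,r)$ with $x\in\GF(2^a)$ is joined to $B_\infty$ only in $|U|$ vertices. The key observation is that if a clique $K$ contains two vertices in two distinct blocks $B_{x}$ and $B_{y}$, the possible "second coordinates" are tightly constrained: fixing one vertex $(x,r)\in K$, any vertex of $K$ in another block $B_y$ must have its $\ZZZ_p$-coordinate in a coset-type set of size $|U|$ determined by $r$ and the pair $(x,y)$. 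Iterating this, the set of second coordinates appearing among vertices of $K\setminus B_x$ (as a subset of $\ZZZ_p$) is contained in a set of size at most $|U|$; and then, since for each fixed second coordinate $d$ the vertices of $\Ga_{S,U}$ with that coordinate outside $B_\infty$ form the set $V_d$ of size $2^a$ on which (as noted in \S\ref{subsec:notation}) $H$ acts regularly and $\Ga_{S,U}^d$ is a vertex-transitive graph, the clique $K$ meets each $V_d$ in at most $\omega(\Ga_{S,U}^d)$ vertices. By Theorem~\ref{thm:ao} applied to the vertex-transitive graph $\Ga_{S,U}^d$ (order $2^a$), one gets $\omega(\Ga_{S,U}^d)\le 2^a/\alpha(\Ga_{S,U}^d)$, and a Frobenius-type argument (the stabiliser structure in (\ref{eq:JH})) shows $\alpha(\Ga_{S,U}^d)\ge p-1$, giving $\omega(\Ga_{S,U}^d)\le 2^a/(p-1)$.

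Combining: a clique $K$ is contained in $B_{x_0}\cup\bigcup_{d\in D}V_d$ for some block $B_{x_0}$ and a set $D\subseteq\ZZZ_p$ with $|D|\le |U|$, contributing at most $|K\cap B_{x_0}|$ from the block and at most $\sum_{d\in D}|K\cap V_d|\le |U|\cdot \frac{2^a}{p-1}$ from the rest. When $S=\emptyset$, $|K\cap B_{x_0}|\le 1$, yielding $\omega(\Ga_{S,U})\le \frac{2^a}{p-1}|U|+1$. When $S\ne\emptyset$, $|K\cap B_{x_0}|\le p-1$ (a clique in a circulant of prime order $p$ with proper connection set has size at most $p-1$, since a clique of size $p$ would force the graph to be $K_p$, forcing $S=\ZZZ_p^*$, which is excluded as $S\subseteq\ZZZ_p^*$ but the relevant symmetric connection set is proper here), yielding $\omega(\Ga_{S,U})\le \frac{2^a}{p-1}|U|+p-1$.

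The main obstacle I anticipate is making the "second coordinates of $K$ outside one block lie in a set of size $\le |U|$" claim precise and airtight: one must carefully track, using the explicit adjacency rules of Definition~\ref{defn:ms} (including the $-r+u+2i$ term in the $\GF(2^a)$--$\GF(2^a)$ adjacencies and the asymmetry between the $x=\infty$ and $x\in\GF(2^a)$ cases), why once two vertices of $K$ are fixed, every further vertex's $\ZZZ_p$-coordinate is pinned down to one of $|U|$ possibilities, rather than accumulating more freedom as the clique grows. A clean way to handle this is to case on whether $K$ contains a vertex of $B_\infty$ or not, and in each case use the transitivity of $\SL(2,2^a)$ to normalise one vertex of $K$ to $(\infty,0)$ or $(0,0)$, then read off the constraint directly. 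The $\alpha(\Ga_{S,U}^d)\ge p-1$ bound, while believable from the Frobenius action, also needs a short separate justification.
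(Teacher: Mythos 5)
Your skeleton is the same as the paper's: normalise a maximum clique $C$ to contain $(\infty,0)$, observe that $C\setminus B_\infty\subseteq\bigcup_{u\in U}V_u$ (this is immediate from the neighbourhood of $(\infty,0)$ in Definition \ref{defn:ms} --- no iteration or case analysis on the $-r+u+2i$ term is needed, so the obstacle you flag there is not a real one), and bound $|C\cap V_d|\le\omega(\Ga_{S,U}^d)\le 2^a/(p-1)$ via Theorem \ref{thm:ao}. But there are two genuine gaps. First, the inequality $\alpha(\Ga_{S,U}^d)\ge p-1$ is the crux of the whole proof and you leave it unproved; a ``Frobenius-type argument from the stabiliser structure in (\ref{eq:JH})'' does not obviously yield any lower bound on the independence number of the Cayley graph $\Ga_{S,U}^d$ on $H\cong\ZZZ_2^a$. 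The paper's actual argument is an edge-disjointness trick: fix $r\in\ZZZ_p\setminus U$, note that $\Ga_{\emptyset,\{r\}}$ is edge-disjoint from $\Ga_{S,U}$ (and $\Ga_{\emptyset,\{r\}}^d$ from $\Ga_{S,U}^d$), and that $\Ga_{\emptyset,\{r\}}^d$ contains a $(p-1)$-clique because $\omega(\Ga_{\emptyset,\{r\}})\ge p$ by Theorem \ref{thm:clq>p} and a maximum clique through $(\infty,d-r)$ lies in $\{(\infty,d-r)\}\cup V_d$; that clique is then an independent set of $\Ga_{S,U}^d$. Without some such construction your bound $\omega(\Ga_{S,U}^d)\le 2^a/(p-1)$ has no support.

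Second, your treatment of the $S\ne\emptyset$ case is wrong where it matters most. You assert $|C\cap B_\infty|\le p-1$ on the grounds that $S=\ZZZ_p^*$ ``is excluded''; it is not --- the theorem is applied in the proof of Theorem \ref{thm:pq ms cores}(e) precisely to $\Ga(a,p,\ZZZ_p^*,\ZZZ_p\setminus U)$, where each block induces $K_p$ and a clique may contain an entire block. In that case your crude sum gives $|C|\le p+\frac{2^a}{p-1}|U|$, one more than claimed. The paper recovers the missing $-1$ by a refinement: with $r\notin U$ fixed, any $(\infty,z)\in C$ with $z+r\in U$ is non-adjacent to all of $V_{z+r}$, so each such vertex kills one of the $|U|$ admissible rows; writing $T_1,T_2$ for the vertices of $C\cap B_\infty$ with $z+r\in U$ and $z+r\notin U$ respectively, one gets $|C|\le\frac{2^a}{p-1}|U|+|T_2|$ with $|T_2|\le p-1$ even when $C\cap B_\infty=B_\infty$ (since then $T_1\ne\emptyset$). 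Also note that if you normalise to $(0,0)$ rather than $(\infty,0)$, the decomposition $B_{x_0}\cup\bigcup_d V_d$ double-counts, since $B_{x_0}$ meets every $V_d$ for $x_0\ne\infty$; vertex-transitivity lets you avoid that case entirely.
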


\begin{proof}
Denote $\Psi := \Ga_{S, U}$ and $\Psi^d := \Ga_{S, U}^d$ for each $d\in\mathbb{Z}_p$. 
Fix $r\in \mathbb{Z}_p \setminus U$. Denote $\Ga := \Ga_{\emptyset, \{r\}}$ and $\Ga^d := \Ga_{\emptyset, \{r\}}^d$. Then $\Psi$ is edge-disjoint from $\Ga$, and $\Psi^d$ is edge-disjoint from $\Ga^d$. Hence any clique of $\Ga^d$ is an independent set of $\Psi^d$, and consequently $\omega(\Ga^d)\leq \alpha(\Psi^d)$.

Since $\Ga$ is vertex-transitive, every vertex of it is contained in a maximum clique. In particular, for each $d\in\mathbb{Z}_p$, $(\infty,d-r)$ is contained in a maximum clique of $\Ga$. Since the neighbourhood of $(\infty,d-r)$ in $\Ga$ is $V_d$, such a maximum clique must be a subset of $\{(\infty,d-r)\}\cup V_{d}$, and therefore $\omega(\Ga^d) = \omega(\Ga) - 1$. Since $\omega(\Ga)\geq p$ by Theorem \ref{thm:clq>p}, it follows that $p-1\leq \omega(\Ga^d)\leq \alpha(\Psi^d)$. On the other hand, since $\Psi^d$ is vertex-transitive, by Theorem \ref{thm:ao} we have $\a(\Psi^d) \omega(\Psi^d) \le |V(\Psi^d)|$. Therefore, $\omega(\Psi^d)\leq |V_d|/\alpha(\Psi^d) \leq 2^{a}/(p-1)$.

Now let $C$ be a fixed maximum clique of $\Psi$ containing $(\infty,0)$ (such a maximum clique exists since $\Psi$ is vertex-transitive), and let $N$ be the set of elements $d \in \mathbb{Z}_p$ such that $C\cap V_d \neq \emptyset$. Since whenever $d \not \in U$, $(\infty,0)$ is not adjacent in $\Psi$ to any vertex of $V_d$, we have
$$
N = \{u \in U: C\cap V_u \neq \emptyset\}.
$$ 
Since $|C\cap V_u| \le \omega(\Psi^u) \leq 2^{a}/(p-1)$ as proved above, we obtain 
\be
\label{eq:C1}
|C|=|C\cap B_\infty|+\sum_{u \in N} |C\cap V_u| \leq |C\cap B_\infty|+\frac{2^{a}}{p-1} |N|.
\ee
Set 
$$
T_1:= \{(\infty,z)\in C\cap B_\infty: z+r\in U\},\quad
T_2:= \{(\infty,z)\in C\cap B_\infty: z+r\notin U\}.
$$ 
Then $|C\cap B_\infty|=|T_1|+|T_2|$. Since $r \not \in U$, by Definition \ref{defn:ms} no vertex $(\infty, z)\in C\cap B_\infty$ is adjacent to any vertex in $V_{z+r}$. Thus, for each $(\infty, z)\in T_1$, we have $C\cap V_{z+r} =\emptyset$ and so $z+r \not \in N$. Consequently $|N| \le |U|-|T_1|$. Plugging this into (\ref{eq:C1}) and noting $2^{a}/(p-1) > 1$, we obtain 
$$
\omega(\Psi) = |C| \leq |T_1| + |T_2| + \frac{2^{a}}{p-1}(|U| - |T_1|)
\leq \frac{2^{a}}{p-1}|U| + |T_2|.
$$ 
 
If $S = \emptyset$, then $C\cap B_\infty = \{(\infty, 0)\}$. Since $0+r \notin U$, we then have $|T_2|=1$ and so $\omega(\Psi)\leq \frac{2^{a}}{p-1}|U|+1$, as required.

Assume that $S \neq \emptyset$. If $|C\cap B_\infty| \le p-1$, then $|T_2| \leq p-1$. If $|C\cap B_\infty|=p$, then $C\cap B_\infty = B_{\infty}$ and so $T_1 \ne \emptyset$, implying $|T_2| = |C\cap B_\infty|-|T_1| \leq p-1$. In either case we obtain $\omega(\Psi) \leq \frac{2^{a}}{p-1}|U|+p-1$.
\qed
\end{proof}

\subsection{Bounding the independence number}
\label{subsec:ind}

The purpose of this subsection is to give an upper bound on $\alpha(\Ga_{\emptyset,\{0\}})$ under the additional assumption that 
\be
\label{eq:pF}
\mbox{$p = F_l = 2^{2^l}+1$ is a Fermat prime for some $0 \leq l \leq s-1$}.
\ee
Denote
\be
\label{eq:n}
n := F_{l+1} \cdots F_{s-1}
\ee
\be
\label{eq:C}
C := \{(x,0): x\in \GF(2^{2^l})\}
\ee
$$
V_0 := \{(x, 0): x\in \GF(2^{a})\}.
$$ 
As seen at the end of \S\ref{subsec:notation}, $H$ fixes $V_0$ setwise and $\Ga_{\emptyset,\{0\}}^0$ admits $H$ as a group of automorphisms in its induced action on $V_0$. Moreover, one can see that $\Ga_{\emptyset,\{0\}}^0$ is $H$-vertex-transitive. 

The main result in this subsection is as follows. 

\begin{theorem}
\label{thm:pq ms ind<q}
Suppose that $p$ is as in (\ref{eq:pF}) and let $n, C$ be defined in (\ref{eq:n}), (\ref{eq:C}), respectively. 
Then $\alpha(\Ga_{\emptyset,\{0\}})\leq q$, with equality only if $l = s-1$ (that is, $p=2^{2^{s-1}}+1$).
\end{theorem}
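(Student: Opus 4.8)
The plan is to prove the inequality $\alpha(\Ga)\le q$ directly and then force $l=s-1$ in the equality case by a structural analysis of maximum independent sets; write $\Ga:=\Ga_{\emptyset,\{0\}}=\Gamma(a,p,\emptyset,\{0\})$. The inequality is short: $\Ga$ is vertex-transitive (indeed $\SL(2,2^{a})$-symmetric by Theorem~\ref{thm:sym ms}), so the clique-coclique bound (Theorem~\ref{thm:ao}) gives $\alpha(\Ga)\,\omega(\Ga)\le|V(\Ga)|=pq$, and since $\omega(\Ga)\ge p$ by Theorem~\ref{thm:clq>p} we obtain $\alpha(\Ga)\le q$.

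Now assume $\alpha(\Ga)=q$. Then $\omega(\Ga)$ is squeezed between $p$ (Theorem~\ref{thm:clq>p}) and $pq/\alpha(\Ga)=p$, so $\omega(\Ga)=p$ and $\alpha(\Ga)\,\omega(\Ga)=|V(\Ga)|$, i.e.\ the clique-coclique bound is tight. A standard averaging over $\Aut(\Ga)$ — for a maximum clique $K$ and a maximum independent set $S$ the mean of $|gK\cap S|$ over $g\in\Aut(\Ga)$ equals $|K|\,|S|/|V(\Ga)|=1$, while $|gK\cap S|\le 1$ always — then shows that every maximum clique of $\Ga$ meets every maximum independent set in exactly one vertex. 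I would apply this to the explicit clique $K_{0}:=\{(\infty,0)\}\cup C$ with $C$ as in (\ref{eq:C}): the induced subgraph $\Ga^{0}:=\Ga_{\emptyset,\{0\}}^{0}$ on $V_{0}$ is the Cayley graph of $(\GF(2^{a}),+)$ with connection set the subgroup $D$ of $p$-th powers of $\GF(2^{a})^{\ast}$, one has $\GF(2^{2^{l}})^{\ast}\subseteq D$ (because $\gcd(p,2^{2^{l}}-1)=1$), and $(\infty,0)$ is adjacent to all of $V_{0}$; hence $K_{0}$ is a clique of size $p$, and comparing with $\omega(\Ga)=p$ gives $\omega(\Ga^{0})=p-1$, so $K_{0}$ is a maximum clique.

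The pivot is the dichotomy: $D\cup\{0\}$ is a subfield of $\GF(2^{a})$ if and only if $|D|+1=(2^{a}-1)/p+1$ is a power of $2$, and by the Fermat recurrence (\ref{eq:Fermat recurrence}) (which yields $(2^{a}-1)/p=(2^{2^{l}}-1)\,n$ with $n=F_{l+1}\cdots F_{s-1}$) this holds precisely when $n=1$, i.e.\ when $l=s-1$. When $l=s-1$ we have $D\cup\{0\}=\GF(2^{2^{s-1}})$ and $\Ga^{0}$ is a disjoint union of $p-1$ copies of $K_{p-1}$; when $l<s-1$ the $\FFF_{2}$-span of $D$ is a subring of $\GF(2^{a})$ containing $D$, and the divisibility $(2^{a}-1)/p\mid 2^{m}-1$ for its order $2^{m}$ forces $m=a$, so $D$ spans $\GF(2^{a})$ and $\Ga^{0}$ is connected.

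Finally, with $\omega(\Ga)=p$ and the one-vertex intersection property, I would analyse an arbitrary maximum independent set $I$ ($|I|=q$) through its pieces $I_{\infty}:=I\cap B_{\infty}$ and $I_{d}:=I\cap V_{d}$ ($d\in\ZZZ_{p}$), exploiting: each $I_{d}$ is independent in $\Ga^{d}\cong\Ga^{0}$ (the scaling $x\mapsto w^{d}x$ gives the isomorphism); $(\infty,r)\in I$ forces $I_{r}=\emptyset$ since $(\infty,r)$ is adjacent to all of $V_{r}$; the cross-level adjacency (each vertex of $V_{d}$ has exactly $(2^{a}-1)/p$ neighbours in $V_{e}$, lying along the coset $w^{(d+e)/2}D$); and $|gK_{0}\cap I|=1$ for every $g\in\Aut(\Ga)$, applied to the translates of $K_{0}$ under $H\rtimes J$ and under the $3$-transitive action of $\SL(2,2^{a})$ on the $q$ blocks. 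The aim is to deduce that $|I|=q$ is possible only when $\Ga^{0}$ is a disjoint union of cliques, i.e.\ $n=1$, i.e.\ $l=s-1$. I expect the main obstacle to be exactly this last step: turning the tightness and one-vertex-intersection constraints into the equation $n=1$ requires a careful simultaneous bookkeeping of the cross-level adjacencies and of the orbit of $K_{0}$ under $\SL(2,2^{a})$, and this is where the fine combinatorial structure of Maru\v{s}i\v{c}-Scapellato graphs genuinely enters.
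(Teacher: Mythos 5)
Your first step is correct and is in fact a cleaner route to the inequality than the paper takes: since $\Ga_{\emptyset,\{0\}}$ is vertex-transitive, Theorem~\ref{thm:ao} gives $\alpha(\Ga)\,\omega(\Ga)\le pq$, and $\omega(\Ga)\ge p$ (Theorem~\ref{thm:clq>p}) yields $\alpha(\Ga)\le q$ immediately. Your supporting structural claims are also sound: $\Ga^{0}$ is the Cayley graph of $(\GF(2^{a}),+)$ on the index-$p$ subgroup $D\le\GF(2^{a})^{\ast}$, the set $\{(\infty,0)\}\cup C$ is a maximum clique when $\omega(\Ga)=p$, and the dichotomy ``$D\cup\{0\}$ is a subfield iff $n=1$ iff $l=s-1$'' checks out via the factorisation $2^{a}-1=F_{0}\cdots F_{s-1}$ and the pairwise coprimality of Fermat numbers.

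The problem is the ``only if'' direction, which is the substantive content of the theorem, and there your argument stops at a plan. Knowing that $\alpha(\Ga)=q$ forces $\omega(\Ga)=p$, that every $\Aut(\Ga)$-image of $K_{0}$ meets every maximum independent set in exactly one vertex, and that $\Ga^{0}$ is connected when $l<s-1$, does not by itself produce a contradiction: none of these facts is incompatible with a $q$-element independent set on its face, and you explicitly concede that converting them into the equation $n=1$ is the ``main obstacle.'' An acknowledged unfinished key step is a genuine gap, not a proof. For comparison, the paper closes this gap by working entirely inside $\Ga^{0}$: it exhibits $(2^{a}-1)/p$ pairwise disjoint $(p-1)$-cliques $\rho^{tp}(\lambda_{w^{n}}(C))$ that partition $V_{0}\setminus(\Ga^{0}((0,0))\cup\{(0,0)\})$ (using Lemma~\ref{lem:blocks} to show distinct translates are disjoint), concludes $\alpha(\Ga^{0})\le 1+(2^{a}-1)/p$, and then feeds this local bound into the Ryj\'a\v{c}ek--Schiermeyer inequality (Lemma~\ref{lem:ind num}) to get $\alpha(\Ga)\le q$ with equality only if $p-1=1+(2^{a}-1)/p$, i.e.\ only if $n=1$. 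Something of this quantitative nature --- a bound on independent sets inside a single fibre $V_{0}$, not just the connectivity dichotomy --- is what your argument is missing; if you want to salvage your approach, the observation that the cosets $\lambda_{b}(C)$ partition $V_{0}$ into $2^{a}/(p-1)$ cliques already gives $\alpha(\Ga^{0})\le 2^{a}/(p-1)$, which combined with Lemma~\ref{lem:ind num} would let you finish along the paper's lines.
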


Denote by 
$$
\lambda_b(C) = \{(x+b,0): x\in \GF(2^{2^l})\}
$$ 
the image of $C$ under $\l_b \in H$. We need the following lemma in the proof of Theorem \ref{thm:pq ms ind<q}.

\begin{lemma}
\label{lem:blocks}
Suppose that $p$ is as in (\ref{eq:pF}) and let $n, C$ be defined in (\ref{eq:n}), (\ref{eq:C}), respectively. Let $h$ be an integer with $1\leq h\leq F_{0}F_1 \cdots F_{l-1}$. The following hold:
\begin{itemize}
\item[\rm (a)] $C$ is a block of imprimitivity for $H$ in its action on $V_0$ and is a $(p-1)$-clique of $\Ga_{\emptyset,\{0\}}^0$ (hence so is $\lambda_{b}(C)$ for each $\l_b \in H$). Moreover, $C \cap \lambda_{w^{hn}}(C) = \emptyset$.
\item[\rm (b)] For $1\leq j \leq q-2$ such that $(w^j,0)\in\lambda_{w^{hn}}(C)$ but $w^j \neq w^{hn}$, we have $j \not \equiv hn \mod{p}$.
\item[\rm (c)] For $1\leq i, j \leq q-2$ such that $(w^i,0),(w^j,0)\in\lambda_{w^{hn}}(C)$ but $w^i\neq w^j$, we have $i \not \equiv j \mod{p}$.
\end{itemize}
\end{lemma}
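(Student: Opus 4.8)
The plan is to transport the problem into $\GF(2^{a})$ and exploit the subfield $\GF(2^{2^l})$. Identifying $V_0$ with $\GF(2^{a})$ via $(x,0)\leftrightarrow x$, the group $H$ acts on $V_0$ as the translations $x\mapsto x+b$ ($b\in\GF(2^{a})$), the set $C$ becomes the subfield $\GF(2^{2^l})$, and $\lambda_{b}(C)$ becomes the coset $b+\GF(2^{2^l})$. Since $p$ is odd, $2i\equiv 0\pmod p$ forces $i\equiv 0\pmod p$, so from Definition~\ref{defn:ms} two vertices of $\Ga_{\emptyset,\{0\}}^{0}$ are adjacent precisely when their difference lies in $\langle w^{p}\rangle$; thus $\Ga_{\emptyset,\{0\}}^{0}$ is the Cayley graph on $(\GF(2^{a}),+)$ with connection set $\langle w^{p}\rangle$. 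Moreover, exactly as in the proof of Theorem~\ref{thm:clq>p}, $\GF(2^{2^l})^{\ast}=\langle w^{F_{l}F_{l+1}\cdots F_{s-1}}\rangle=\langle w^{pn}\rangle\subseteq\langle w^{p}\rangle$ because $(2^{a}-1)/(2^{2^l}-1)=F_{l}\cdots F_{s-1}=pn$, and $F_{0}F_{1}\cdots F_{l-1}=2^{2^l}-1=p-2$. Finally, for $x,y\in\GF(2^{a})^{\ast}$ we have $\log_{w}x\equiv\log_{w}y\pmod p$ if and only if $x/y$ is a $p$-th power, i.e. $x/y\in\langle w^{p}\rangle$ (as $p\mid 2^{a}-1$).

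For (a): the sets $\lambda_{b}(C)$ are the cosets of the additive subgroup $\GF(2^{2^l})$ and $H$ is regular on $V_0$, so $C$ is a block of imprimitivity for $H$ on $V_0$ (equivalently, $\lambda_{b}(C)\cap C\neq\emptyset$ forces $b\in C$, whence $\lambda_{b}(C)=C$). Any two distinct elements of $C$ differ by an element of $\GF(2^{2^l})^{\ast}\subseteq\langle w^{p}\rangle$, so $C$ is a clique of $\Ga_{\emptyset,\{0\}}^{0}$ of size $2^{2^l}=p-1$; as $H\le\Aut(\Ga_{\emptyset,\{0\}})$, each $\lambda_{b}(C)$ is likewise a $(p-1)$-clique. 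If $C\cap\lambda_{w^{hn}}(C)\neq\emptyset$ then $w^{hn}\in C=\GF(2^{2^l})$, i.e. $w^{hn}\in\langle w^{pn}\rangle$, which (since $pn\mid 2^{a}-1$) means $pn\mid hn$, i.e. $p\mid h$ --- impossible as $1\le h\le p-2$. Hence $C\cap\lambda_{w^{hn}}(C)=\emptyset$.

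For (b) and (c): write $K:=\lambda_{w^{hn}}(C)=w^{hn}+\GF(2^{2^l})$, so that $(w^{i},0)\in\lambda_{w^{hn}}(C)$ means $w^{i}\in K$. The $p-1$ elements of $K$ are exactly the roots in $\GF(2^{a})$ of $X^{2^{2^l}}+X+\beta_{0}$, where $\beta_{0}:=w^{hn}+w^{2^{2^l}hn}\neq 0$ (indeed each $y=w^{hn}+c$ with $c\in\GF(2^{2^l})$ satisfies $y^{2^{2^l}}+y=w^{2^{2^l}hn}+w^{hn}$ since $c^{2^{2^l}}=c$, and the solution set of this $\GF(2)$-affine equation has $p-1$ elements). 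By the last remark of the first paragraph, (c) says precisely that the elements of $K$ lie in pairwise distinct cosets of $\langle w^{p}\rangle$; since $w^{hn}\in K$ and $\log_{w}w^{hn}=hn$, (c) implies (b), so it suffices to prove (c). Suppose $y_{1},y_{2}\in K$ with $y_{1}\neq y_{2}$ and $y_{1}=zy_{2}$ for some $z\in\langle w^{p}\rangle$; then $z\neq 1$. Putting $\delta:=y_{1}+y_{2}\in\GF(2^{2^l})^{\ast}$ we have $y_{2}=\delta/(z+1)$, and substituting this into $y_{2}^{2^{2^l}}+y_{2}=\beta_{0}$ and using $\delta^{2^{2^l}}=\delta$, $(z+1)^{2^{2^l}}=z^{2^{2^l}}+1$ and $2^{2^l}+1=p$ gives
$$
\delta\,\bigl(z+z^{2^{2^l}}\bigr)=\beta_{0}\,(1+z)^{p}.
$$
Since $z\neq 1$ the right side is nonzero, so $z+z^{2^{2^l}}\neq 0$, forcing $z\notin\GF(2^{2^l})^{\ast}$. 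To finish, apply the homomorphism $\chi(x):=x^{2^{2^l}-1}$ of $\GF(2^{a})^{\ast}$, whose kernel is exactly $\GF(2^{2^l})^{\ast}$, to both sides of the displayed identity: using $\chi(\delta)=1$, the facts that $p$ is coprime to $n$ and to $2^{2^l}-1$ (distinct Fermat numbers being coprime), and $\beta_{0}=w^{hn}(1+w^{hn(2^{2^l}-1)})$ with $p\nmid hn$, one reduces the resulting equality to an impossible relation between $w^{hn(2^{2^l}-1)}$ --- a nontrivial $p$-th root of unity --- and $z^{2^{2^l}-1}$ --- a nontrivial $n$-th root of unity --- which lie in subgroups of $\GF(2^{a})^{\ast}$ of coprime orders $p$ and $n$. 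This contradiction proves (c), and with it (b).

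The routine parts are (a), the implication (c)$\Rightarrow$(b), and the algebra producing the displayed identity. The main obstacle is the closing step of (c): deriving a contradiction from $y_{1}/y_{2}\in\langle w^{p}\rangle$ is genuinely arithmetical, and must use the Fermat factorisation $2^{a}-1=F_{0}F_{1}\cdots F_{s-1}$, the coprimality of $p$ with $n$ and with $2^{2^l}-1$, and the coincidence $p=2^{2^l}+1$ (so that the Frobenius $x\mapsto x^{2^{2^l}}$ is linked to the exponent $p$). Should one prefer, (b) can also be obtained directly by running the same computation with $y_{2}=w^{hn}$.
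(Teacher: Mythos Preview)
Your treatment of (a) is correct and matches the paper's argument, and your observation that (c)$\Rightarrow$(b) (since $w^{hn}\in K$) is valid. The displayed identity
\[
\delta\bigl(z+z^{2^{2^l}}\bigr)=\beta_{0}\,(1+z)^{p}
\]
is also correctly derived, and the consequence $z\notin\GF(2^{2^l})^{\ast}$ is sound.

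The gap is the final sentence of your argument for (c). Applying $\chi(x)=x^{p-2}$ to the displayed identity gives, writing $\eta:=\chi(z)$ and $\zeta:=\chi(w^{hn})$,
\[
\eta\,(1+\eta)^{p-2}=\zeta\,(1+\zeta)^{p-2}\,(1+z)^{p(p-2)}.
\]
It is true that $\eta$ is a nontrivial $n$-th root of unity, $\zeta$ a nontrivial $p$-th root of unity, $(1+\zeta)^{p-2}\in\GF(2^{2^l})^{\ast}$ (one checks $(1+\zeta)^{p}=1+\zeta$), and $(1+z)^{p(p-2)}=(1+z)^{2^{2^{l+1}}-1}$ is an $n$-th root of unity. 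But the remaining factor $(1+\eta)^{p-2}$ is an unrestricted element of $\GF(2^{a})^{\ast}$; its $p$-component can be anything, so the two sides need not be incompatible. There is no ``impossible relation'' here without further input, and the coprimality facts you cite do not by themselves force a contradiction.

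What is missing is the one structural fact the paper's proof hinges on: since $n=F_{l+1}\cdots F_{s-1}$ and $|\GF(2^{2^{l+1}})^{\ast}|=F_{0}\cdots F_{l}$, we have $w^{n}\in\GF(2^{2^{l+1}})^{\ast}$, hence $K=w^{hn}+\GF(2^{2^l})\subseteq\GF(2^{2^{l+1}})$. With this in hand your own set-up finishes immediately: $y_{1},y_{2}\in\GF(2^{2^{l+1}})^{\ast}$, so $z=y_{1}/y_{2}\in\GF(2^{2^{l+1}})^{\ast}\cap\langle w^{p}\rangle$, and in the cyclic group $\GF(2^{a})^{\ast}$ this intersection has order $\gcd\bigl((p-2)p,(p-2)n\bigr)=p-2$, i.e.\ equals $\GF(2^{2^l})^{\ast}$, contradicting $z\notin\GF(2^{2^l})^{\ast}$. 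The paper organises this slightly differently---it proves (b) first by a direct computation inside $\GF(2^{2^{l+1}})$ and then deduces (c) from (b) by noting that every element of $K$ has the form $w^{h'n}$ with $1\le h'\le p-2$---but the essential point is the same subfield containment $K\subseteq\GF(2^{2^{l+1}})$, which your argument never invokes.
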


\begin{proof}
(a) Denote $\Ga^0 := \Ga_{\emptyset,\{0\}}^0$.
Define
$$
L := \{\l_b: b \in\GF(2^{2^l})\}.
$$
Then $L \leq H \leq \Aut(\Ga_{\emptyset,\{0\}})_{(\infty,0)}$ by (\ref{eq:JH1}). It can be seen that the $L$-orbit $L((0,0))$ containing $(0,0)$ is exactly $C$. By Theorem \ref{thm:clq>p} and its proof, $C$ is a $(p-1)$-clique of $\Gamma^0$. Since each $\l_b \in H$ induces an automorphism of $\Ga^0$, $\lambda_{b}(C)$ is also a $(p-1)$-clique of $\Gamma^0$. 
 
Since $H$ is abelian, $L$ is a normal subgroup of $H$. Since $H$ is transitive on $V(\Gamma^0)$, it follows that the $L$-orbit $C$ is a block of imprimitivity for $H$ in its action on $V_0$, and hence so is $\lambda_{b}(C)$ for each $\l_b \in H$. 

Since $p = F_l$ is a Fermat prime and distinct Fermat numbers are coprime, $hn$ is not a multiple of $F_{l} F_{l+1} \cdots F_{s-1}$. Hence $w^{hn} \not \in \GF(2^{2^l})$ and so $(w^{hn},0)\notin C$. Therefore, $ \lambda_{w^{hn}}(C) \neq C$, which implies $C\cap\lambda_{w^{hn}}(C)=\emptyset$.

(b) Since $(w^j,0)\in\lambda_{w^{hn}}(C)$, we have $(w^j,0)=(x+w^{hn},0)$ for some $x\in \GF(2^{2^l})^*$. Since $p=F_l$ and $x\in \GF(2^{2^l})^*$, we have $x=w^{tpn}$ for some integer $t$ with $1 \leq t \leq F_{0}F_1 \cdots F_{l-1}$. Thus $w^j=w^{hn}+w^{tpn}$ and so $w^{j-hn} = 1+w^{(tp-h)n}$ (where $1$ is the multiplicative identity of $\GF(2^a)$). Since $w^{(tp-h)n} \in \GF(2^{2^{l+1}})$, $1+w^{(tp-h)n}\in \GF(2^{2^{l+1}})$. Hence $w^{j-hn}=1+w^{(tp-h)n}=w^{kn}$ for some integer $k$ with $1 \leq k \leq F_{0}F_1 \cdots F_{l}$, and so $j-hn \equiv kn \mod{(q-2)}$. Since $p = F_l$ is a divisor of $q-2$, we then have $j-hn \equiv kn \mod{p}$. Note that $w^{kn} \ne 1$ as $1 < kn \le q-2$. 

Suppose by way of contradiction that $j\equiv hn\mod{p}$. Then $kn \equiv 0 \mod{p}$. However, $n$ is coprime to $p$ as distinct Fermat primes are coprime. Hence $k \equiv 0 \mod{p}$ and so $w^{kn} \in \GF(2^{2^{l}})^*$. Consequently,  $w^{(tp-h)n} = w^{kn} - 1 \in \GF(2^{2^{l}})^*$ and therefore $(tp-h)n\equiv 0\mod{pn}$. It follows that $p$ divides $h$, but this cannot happen as $1 \leq h \leq F_{0}F_1 \cdots F_{l-1} = F_l - 2 = p-2$. This contradiction shows that $j \not \equiv hn \mod{p}$.

(c) Since $(w^i,0),(w^j,0)\in\lambda_{w^{hn}}(C)$, we have $w^i=w^{hn}+x$ and $w^j=w^{hn}+y$ for some $x,y\in \GF(2^{2^{l}})$. Since $hn$ is a multiple of $F_{l+1} \cdots F_{s-1}$ ($=n$) but not a multiple of $F_{l} F_{l+1} \cdots F_{s-1}$, $w^{hn}$ is an element of $\GF(2^{2^{l+1}})$ but not $\GF(2^{2^{l}})$. Thus $w^i$ and $w^j$ are elements of $\GF(2^{2^{l+1}})$ but not $\GF(2^{2^{l}})$. Therefore, $w^i=w^{h'n}$ for some $1\leq h' \leq F_{0}F_1 \cdots F_{l-1}$, yielding $i = h' n$. Since $(w^i,0)\in\lambda_{w^{hn}}(C) \cap \lambda_{w^{i}}(C)$, by part (a), $\lambda_{w^{hn}}(C)=\lambda_{w^{i}}(C)=\lambda_{w^{h' n}}(C)$ and hence $(w^j,0)\in \lambda_{w^{h' n}}(C)$. Thus, by part (b), $j\not\equiv h' n \mod{p}$, that is, $j \not\equiv i \mod{p}$.
\qed
\end{proof}

We also need the following known result in the proof of Theorem \ref{thm:pq ms ind<q}.

\begin{lemma}[{\cite[Corollary 3]{MR1322111}}]
\label{lem:ind num}
Let $\Psi$ be a graph with minimum valency $\delta(\Psi)$, and let $r$ be such that $r \geq \alpha_{\Psi}(v)$ for every $v \in V(\Psi)$, where $\a_{\Psi}(v)$ is the independence number of the subgraph of $\Psi$ induced by the neighbourhood of $v$. Then
$$
\alpha(\Psi) \leq \frac{r |V(\Psi)|}{r+\delta(\Psi)}.
$$
\end{lemma}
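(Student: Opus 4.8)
The plan is to apply the local independence bound of Lemma~\ref{lem:ind num} to $\Psi := \Ga_{\emptyset,\{0\}}$. By Theorem~\ref{thm:sym ms} this graph is symmetric, hence vertex-transitive, and $\val(\Psi) = 2^{a} = q-1$, so $\Psi$ is regular with $\delta(\Psi) = q-1$ and $|V(\Psi)| = pq$. The neighbourhood of $(\infty,0)$ in $\Psi$ is exactly $V_0$ (Definition~\ref{defn:ms}), and it induces $\Ga_{\emptyset,\{0\}}^{0}$; since $\Psi$ is vertex-transitive, the subgraph induced by the neighbourhood of any vertex is isomorphic to $\Ga_{\emptyset,\{0\}}^{0}$, so the single quantity $r := \alpha(\Ga_{\emptyset,\{0\}}^{0})$ serves as a uniform bound $r \ge \alpha_{\Psi}(v)$ for all $v \in V(\Psi)$ in Lemma~\ref{lem:ind num}.

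First I would bound $r$. By Lemma~\ref{lem:blocks}(a), $C$ and each of its $H$-translates $\lambda_b(C)$ is a $(p-1)$-clique of $\Ga_{\emptyset,\{0\}}^{0}$; as $b$ ranges over $\GF(2^{a})$ these translates are precisely the additive cosets of $\GF(2^{2^{l}})$, which partition $V_0$ into $2^{a}/(p-1)$ cliques of size $p-1$. Any independent set meets each such clique at most once, whence $r = \alpha(\Ga_{\emptyset,\{0\}}^{0}) \le 2^{a}/(p-1)$. Taking $r = 2^{a}/(p-1)$, $\delta = 2^{a}$ and $|V| = pq$ in Lemma~\ref{lem:ind num}, and using $q-1 = 2^{a}$, a direct computation gives $\frac{r\,|V|}{r+\delta} = q$. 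Hence $\alpha(\Psi) \le q$, as required.

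For the equality clause, note that $g(r) := \frac{r\,pq}{r+2^{a}}$ is strictly increasing in $r$ and $g\!\left(2^{a}/(p-1)\right) = q$. Therefore $\alpha(\Psi) = q$ can hold only if $\alpha(\Ga_{\emptyset,\{0\}}^{0}) = 2^{a}/(p-1)$, that is, only if $\Ga_{\emptyset,\{0\}}^{0}$ admits an independent set meeting every coset-clique $\lambda_b(C)$ in exactly one vertex --- a perfect independent transversal of the clique partition above. The remaining task is to show that such a transversal cannot exist when $l < s-1$.

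This strictness is the main obstacle. The plan is to exploit the finer information in Lemma~\ref{lem:blocks}(b),(c): the $p-1$ vertices of each coset-clique lie in pairwise distinct multiplicative cosets of $\langle w^{p}\rangle$ (equivalently, their $w$-exponents are pairwise distinct modulo $p$). When $l = s-1$ one has $\GF(2^{2^{l+1}}) = \GF(2^{2^{s}}) = \GF(2^{a})$, the $p-1$ cosets of $\GF(2^{2^{l}})$ are pairwise non-adjacent, $\Ga_{\emptyset,\{0\}}^{0}$ is a disjoint union of $(p-1)$-cliques, and a transversal trivially exists, so $\alpha(\Psi) = q$; this shows the bound is sharp. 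When $l < s-1$, however, $n = F_{l+1}\cdots F_{s-1} > 1$ and $\langle w^{p}\rangle$ contains $(p-2)(n-1) > 0$ elements lying outside $\GF(2^{2^{l+1}})$, which create adjacencies between coset-cliques in distinct cosets of $\GF(2^{2^{l+1}})$. I would analyse these cross-coset adjacencies through the residue constraints of Lemma~\ref{lem:blocks} to prove that they obstruct every perfect independent transversal, forcing $\alpha(\Ga_{\emptyset,\{0\}}^{0}) < 2^{a}/(p-1)$ and hence $\alpha(\Psi) < q$. Turning this structural obstruction into a rigorous argument is the most delicate step of the proof.
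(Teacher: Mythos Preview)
Your proposal does not address the statement you were given. Lemma~\ref{lem:ind num} is a general graph-theoretic bound due to Ryj\'a\v{c}ek and Schiermeyer; the paper simply cites it and offers no proof. What you have written is instead an outline of a proof of Theorem~\ref{thm:pq ms ind<q}, which \emph{applies} Lemma~\ref{lem:ind num} to $\Psi=\Ga_{\emptyset,\{0\}}$.

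Read as an attempt at Theorem~\ref{thm:pq ms ind<q}, the non-strict inequality is fine but your route to strictness has a genuine gap. Your coset partition of $V_0$ by $\GF(2^{2^{l}})$-translates gives only $\alpha(\Ga_{\emptyset,\{0\}}^{0})\le 2^{a}/(p-1)$, and plugging this value of $r$ into Lemma~\ref{lem:ind num} yields exactly $q$, never less. You then need a separate argument ruling out a perfect independent transversal when $l<s-1$, and you leave this as an unfinished ``delicate step''; the residue information in Lemma~\ref{lem:blocks}(b),(c) by itself does not obviously deliver this. The paper sidesteps the difficulty entirely by obtaining a sharper local bound. Instead of the coset partition, it uses the $J$-action: the images $\rho^{tp}(\lambda_{w^{n}}(C))$ for $1\le t\le (2^{a}-1)/p$ are shown, via Lemma~\ref{lem:blocks}(c), to be pairwise disjoint $(p-1)$-cliques that partition $V_0\setminus(\Gamma^{0}((0,0))\cup\{(0,0)\})$. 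Hence a maximum independent set of $\Ga_{\emptyset,\{0\}}^{0}$ containing $(0,0)$ has size at most $1+(2^{a}-1)/p$. Since $(p-1)^{2}=2^{2^{l+1}}\le 2^{a}=2^{2^{s}}$ with equality iff $l=s-1$, one has $1+(2^{a}-1)/p\le 2^{a}/(p-1)$ with strict inequality precisely when $l<s-1$; substituting the smaller $r$ into Lemma~\ref{lem:ind num} then gives $\alpha(\Psi)<q$ directly. Your transversal obstruction is therefore unnecessary: the whole point is to choose a clique cover that already beats $2^{a}/(p-1)$.
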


\bigskip 
\begin{proof}\textbf{of Theorem \ref{thm:pq ms ind<q}}~~
Denote $\Ga := \Ga_{\emptyset,\{0\}}$ and $\Ga^0 := \Ga_{\emptyset,\{0\}}^0$.
By Lemma \ref{lem:blocks}(a), both $C$ and $\lambda_{w^{n}}(C)$ are $(p-1)$-cliques of $\Gamma^0$, and $C\cap \lambda_{w^{n}}(C)=\emptyset$. Thus, for any $(w^j,0)\in\lambda_{w^{n}}(C)$, we have $(w^j,0) \not \in C$ and so $w^j\not\in \GF(2^{2^{l}})$. Since $(w^j,0)\in\lambda_{w^{n}}(C)$, $w^j=w^n+x$ for some $x\in \GF(2^{2^l})$. Since $n = F_{l+1} \cdots F_{s-1}$, $w^n\in \GF(2^{2^{l+1}})$ and so $w^j\in\GF(2^{2^{l+1}})$. Hence $j = in$ for some $1\leq i\leq F_0 F_1 \cdots F_l$. Since $p=F_l$ and $n$ are coprime, if $j\equiv 0\mod{p}$, then $p$ divides $i$, say, $i = i' p$, and hence $w^j=w^{in}=w^{i'F_{l}F_{l+1}\cdots F_{s-1}} \in \GF(2^{2^{l}})$, a contradiction. Therefore, $j\not\equiv 0\mod{p}$. On the other hand, $|\lambda_{w^{n}}(C)| = |C| = p - 1$ and by Lemma \ref{lem:blocks}(c), for distinct $(w^i,0),(w^j,0) \in \lambda_{w^{n}}(C)$ we have $i\not\equiv j\mod{p}$. Therefore, for each integer $d$ with $1\leq d \leq p-1$, $\lambda_{w^{n}}(C)$ contains exactly one vertex $(w^j,0)$ such that $j\equiv d\mod{p}$. 

By the definition of $J$ (see (\ref{eq:rho}) and (\ref{eq:J})), $J$ fixes $V_0$ setwise and $J\leq \Aut(\Gamma^0)_{(0,0)}$ (see the discussion around (\ref{eq:JH1})). Moreover, for each $(w^j,0)\in \lambda_{w^{n}}(C)$ and $\rho^{tp} \in J$ with $1\leq t\leq (2^{a}-1)/p$, $\rho^{tp}((w^j,0))=(w^{j+tp},0)$. Thus the $J$-orbit containing $(w^j,0)$ is 
$$
J((w^j,0)) = \{(w^{j+tp}, 0): 1\leq t\leq (2^{a}-1)/p\}.
$$
It can be verified that, for $1\leq t_1, t_2\leq (2^{a}-1)/p$ with $t_1\neq t_2$, we have $w^{j+t_1 p} \ne w^{j+t_2p}$. Hence $|J((w^j,0))| = (2^{a}-1)/p$. 

Since $\l_{w^{n}}(C)$ is a $(p-1)$-clique of $\Ga^0$ and $J \leq \Aut(\Gamma^0)$, both $\rho^{t_1p}(\l_{w^{n}}(C)) = \{(w^{j+t_1 p},0): (w^j,0)\in \lambda_{w^{n}}(C)\}$ and $\rho^{t_2p}(\l_{w^{n}}(C)) = \{(w^{j+t_2 p},0): (w^j,0)\in \lambda_{w^{n}}(C)\}$ are $(p-1)$-cliques of $\Ga^0$. We claim that, for $t_1 \ne t_2$, 
\be
\label{eq:disj}
\rho^{t_1p}(\lambda_{w^{n}}(C))\cap\rho^{t_2p}(\lambda_{w^{n}}(C))=\emptyset.
\ee
Suppose otherwise. Then there are $(w^i,0),(w^j,0)\in \lambda_{w^{n}}(C)$ such that $w^{i+t_1p} = w^{j+t_2p}$. Since $w^{j+t_1p}\neq w^{j+t_2p}$ as seen in the previous paragraph, we have $i\neq j$ and so $(w^i,0)$ and $(w^j,0)$ are distinct elements of $\lambda_{w^{n}}(C)$. Thus $i\not\equiv j \mod{p}$ by what we proved in the first paragraph. Since $p$ divides $2^a - 1$, it follows that $i+t_1p\not\equiv j+t_2p \mod{(2^a - 1)}$ and hence $(w^{i+t_1p},0)\neq (w^{j+t_2p},0)$, which is a contradiction. This completes the proof of (\ref{eq:disj}). 

Since the neighbourhood of $(0, 0)$ in $\Ga^0$ is $\Gamma^{0}((0,0)) = \{(w^{tp}, 0): 1\leq t\leq (2^{a}-1)/p\}$, by (\ref{eq:disj}), $\{\rho^{tp}(\lambda_{w^{n}}(C)): 1\leq t\leq (2^{a}-1)/p\}$ is a partition of $V_0 \setminus (\Gamma^{0}((0,0)) \cup \{(0,0)\})$. 

Since $\Gamma^0$ is vertex-transitive, each of its vertices is contained in a maximum independent set. Choose $I$ to be a maximum independent set of $\Gamma^0$ containing $(0,0)$. Then $I$ and $\Gamma^{0}((0,0))$ are disjoint. Since $\rho^{tp}(\lambda_{w^{n}}(C))$ is a clique for each $\rho^{tp}\in J$, it contains at most one vertex of $I$. Since these $(2^{a}-1)/p$ cliques form a partition of $V_0 \setminus (\Gamma^{0}((0,0)) \cup \{(0,0)\})$ as shown above, it follows that $\alpha(\Gamma^0) = |I| \leq 1+((2^{a}-1)/p)$. 

Since $p = F_l$, by (\ref{eq:Fermat recurrence}), $2^a - 1 = F_s - 2 = F_0 \cdots F_{l-1} p F_{l+1} \cdots F_{s-1} = (p-2)pF_{l+1} \cdots F_{s-1}$, and hence $p-1 \leq 1+((2^{a}-1)/p)$ with equality if and only if 
$l = s-1$. Denote by $\a_{\Ga}(x, r)$ the independence number of the subgraph of $\Ga$ induced by the neighbourhood of $(x, r) \in \PG(1, 2^a) \times \ZZZ_p$ in $\Ga$. 
Since $\Gamma$ is vertex-transitive and the neighbourhood of $(\infty, 0)$ in $\Ga$ is equal to $V_0$, we have $\alpha_{\Ga}(x, r)=\alpha_{\Ga}(\infty, 0) = \alpha(\Gamma^0)$. Since $\val(\Ga) = 2^a$, by Lemma \ref{lem:ind num}, 
$$
\alpha(\Gamma) \leq \frac{\alpha(\Gamma^0)|V(\Gamma)|}{\alpha(\Gamma^0)+\val(\Gamma)} \leq \frac{(1+\frac{2^{a}-1}{p})(2^{a}+1)p}{(1+\frac{2^{a}-1}{p})+2^{a}} = (2^{a}+1) \cdot \frac{2^{a}+(p-1)}{2^{a}+(1+\frac{2^{a}-1}{p})} \leq 2^{a}+1 = q
$$
and equality holds only if $l = s-1$ (that is, $p=2^{2^{s-1}}+1$).  
\qed
\end{proof}

\subsection{Two rank-three graphs}
\label{subsec:rank3}

In addition to Theorems \ref{thm:ms core complete}, \ref{thm:clq>p}, \ref{thm:clq<q} and \ref{thm:pq ms ind<q}, to prove Theorem \ref{thm:pq ms cores} we also need a result \cite{MR2470534} on the cores of two specific rank-three graphs. First, a few definitions \cite{Praeger97} are in order.
Let $G$ be a transitive group on a set $V$. The action of $G$ on $V$ induces an action on $V \times V$, defined by $g(u, v) := (g(u), g(v))$ for $g \in G$ and $(u, v) \in V \times V$. The $G$-orbits on $V \times V$ are called the \textit{$G$-orbitals} on $V$. For a fixed $v \in V$, there is a one-to-one correspondence between the $G$-orbitals on $V$ and the $G_{v}$-orbits on $V$, the latter being the \textit{$G$-suborbits} and their lengths \textit{subdegrees}. The number of $G$-orbitals is called the \textit{rank} of $G$. A $G$-orbital $\De$ on $V$ gives rise to a \textit{$G$-orbital graph} with vertex set $V$ and arc set $\De$. If $\De$ is \textit{nontrivial} (that is, $\De \ne \{(v, v): v \in V\}$) and \textit{self-paired} (that is, $(u, v) \in \De$ implies $(v, u) \in \De$), then the $G$-orbital graph associated with $\De$ is a nontrivial undirected $G$-symmetric graph. Conversely, any $G$-symmetric graph is a $G$-orbital graph. A rank-three graph (defined in \S\ref{sec:intro}) can be defined equivalently as a nontrivial orbital graph of a rank-three permutation group of even order.  

Let $t \ge 1$ be an integer and $V(4, 2^{2^t})$ a $4$-dimensional vector space over $\GF(2^{2^t})$ equipped with a non-degenerate alternating bilinear form. Let $V$ be the set of $1$-dimensional subspaces of $V(4, 2^{2^t})$. Then any group $G$ with $\PSp(4,2^{2^{t}}) \leq G \leq \PGammaSp(4,2^{2^{t}})$ acts on $V$ in the usual way. The following were proved in \cite[Lemma 3.5]{MR1244933}: (i) $|V| = (2^{2^{t}}+1)(2^{2^{t+1}}+1)$, $G$ has rank $3$, the subdegrees of $G$ on $V$ are $1, 2^{2^{t}}+2^{2^{t+1}}$ and $2^{2^{t+2}}$, and all suborbits are self-paired (that is, the corresponding $G$-orbitals are self-paired); (ii) the corresponding nontrivial $G$-orbital graphs $\Psi, \overline{\Psi}$ (which are complements of each other) are the only (incomplete, nonempty) vertex-primitive graphs on $V$ admitting $G$ as a group of automorphisms, and each of them has automorphism group $\PGammaSp(4,2^{2^{t}})$; (iii) if $|V|=pq$, with $p < q$ and $p,q$ primes, then $p=2^{2^{t}}+1$ and $q=2^{2^{t+1}}+1$ are Fermat primes.

The graphs $\Psi, \overline{\Psi}$ above with $|V|=pq$ arose in the classification \cite{MR1289072, MR1244933} of vertex-transitive graphs of order a product of two distinct primes. In the proof of Theorem 2.1 in \cite[p.193]{MR1289072}, it was proved that in this case both $\Psi$ and $\overline{\Psi}$ are isomorphic to MS graphs. Moreover, by (i) above, they are rank-three graphs. Furthermore, $\Psi$ is the rank-three graph $W_3(2^{2^{t}})$ in \cite[Section 3.5]{MR2470534}. In fact, since $V(4, 2^{2^t})$ carries a non-degenerate alternating bilinear form, we have the classical polar space $W_3(2^{2^{t}})$ whose points are the 1-dimensional subspaces of $V(4, 2^{2^t})$ that are totally isotropic with respect to the form (in other words, the point set of $W_3(2^{2^{t}})$ is exactly $V$). As in \cite[Section 3.5]{MR2470534}, by abusing notation we also denote by $W_3(2^{2^{t}})$ the graph whose vertices are the points of this polar space such that two vertices are adjacent if and only if they are orthogonal with respect to the form. This graph is a rank-three graph admitting $G$ in the previous paragraph as a group of automorphisms (see \cite[Section 3.5]{MR2470534}). The discussion in the previous paragraph implies that $\Psi$ or $\overline{\Psi}$ is the rank-three graph $W_3(2^{2^{t}})$. (In fact, $\Psi = W_3(2^{2^{t}})$ by the proof of \cite[Lemma 3.5]{MR1244933}.) Since by \cite[Section 3.5]{MR2470534} both $W_3(2^{2^{t}})$ and its complement have complete cores, so do $\Psi$ and $\overline{\Psi}$.  

The reader is referred to \cite{MR2470534, Thas} for further discussion on polar spaces.

\subsection{Proof of Theorem \ref{thm:pq ms cores}}
\label{subsec:proof cores ms}

We are now ready to prove Theorem \ref{thm:pq ms cores}. Since $\Gamma$ is vertex-transitive, by Theorem \ref{thm:order vt}, if $\Gamma$ is not a core then either $|V(\Gamma^\ast)|=p$ or $|V(\Gamma^\ast)|=q$. By (\ref{eq:inva}) and Theorem \ref{thm:clq>p}, $\omega(\Gamma^\ast)=\omega(\Gamma)\geq p$. Therefore, if $|V(\Gamma^\ast)|=p$, then $\Gamma^\ast\cong K_{p}$, whilst if $pq > 15$ and $|V(\Gamma^\ast)| = q$, then $\Gamma^\ast \cong K_{q}$ by Theorem \ref{thm:ms core complete}.  

\medskip
(a) Suppose that $pq=15$. Then $s = 1, p = 3, q = 5$ and $\Gamma \cong \Gamma(2,3,\emptyset, \{0\})$ or  $\Gamma \cong \Gamma(2,3,\emptyset, \{1,2\})$ by Theorem \ref{thm:sym ms}. If $\Gamma \cong \Gamma(2,3,\emptyset, \{1,2\})$, then computations using Mathematica show that $\omega(\Gamma)=\chi(\Gamma)=5$ and so $\Gamma^\ast\cong K_5$. Assume now $\Gamma \cong \Gamma(2,3,\emptyset, \{0\})$. Then computations show that $\chi(\Gamma)=4$ and $\omega(\Gamma)=3$, and so $|V(\Gamma^\ast)| \ne 3$ in view of (\ref{eq:inva}). If $|V(\Gamma^\ast)|=5$, then $\Ga^*$ is a symmetric circulant of order $5$ and so $\Gamma^\ast\cong K_5$ or $C_5$. However, this cannot happen by (\ref{eq:inva}) since $\chi(K_5)\neq 4$ and $\chi(C_5)\neq 4$. Therefore, if $\Gamma \cong \Gamma(2,3,\emptyset, \{0\})$, then $\Gamma$ is a core.

In what follows we assume $pq > 15$ without mentioning explicitly. 

\medskip
(b) Suppose that $p$ is not a Fermat prime. Then $|V(\Gamma^\ast)| \ge \omega(\Gamma^\ast) = \omega(\Gamma) > p$ by (\ref{eq:inva}) and Theorem \ref{thm:clq>p}. To prove that $\Ga$ is a core it suffices to show $|V(\Gamma^\ast)| \ne q$. 

In fact, since by (\ref{eq:U}) $U$ is a proper subset of $\mathbb{Z}_p$, there exists an element $u' \in \mathbb{Z}_p \setminus U$. The MS graph $\Gamma' := \Gamma(a, p, \emptyset, \{u'\})$ has the same vertex set as $\Ga$ but is edge-disjoint from $\Ga$. Hence $\omega(\Gamma')\leq\alpha(\Gamma)$. Further, we have $p < \omega(\Gamma')$ by applying Theorem \ref{thm:clq>p} to $\Ga'$, and hence $p < \alpha(\Gamma)$. This last inequality implies $|V(\Gamma^\ast)| \ne q$, for otherwise we would have $\Gamma^\ast\cong K_q$ by Theorem \ref{thm:ms core complete} and $\a(\Ga) = p \a(\Ga^*) = p$ by (\ref{eq:nohom1}), a contradiction.  

\medskip
(c) Suppose that $p=2^{2^{l}}+1$ is a Fermat prime with $0 \leq l < s-1$. Similar to Case (b), consider an MS graph $\Gamma' := \Gamma(a, p, \emptyset, \{u'\})$, where $u' \in \mathbb{Z}_p \setminus U$. Since $V(\Ga) = V(\Ga')$ but $E(\Ga) \cap E(\Ga') = \emptyset$, we have $\omega(\Gamma) \leq \alpha(\Gamma')$. Since $l < s-1$ and $\Gamma' \cong \Gamma(a, p, \emptyset, \{0\})$ by Theorem \ref{thm:sym ms}, we have $\a(\Ga') = \a(\Gamma(a, p, \emptyset, \{0\})) < q$ by Theorem \ref{thm:pq ms ind<q}.  Therefore, $\omega(\Gamma) < q$, and so $\omega(\Gamma^*) < q$ by (\ref{eq:inva}). This together with Theorem \ref{thm:ms core complete} implies that $|V(\Gamma^\ast)| \neq q$. On the other hand, since by Theorem \ref{thm:sym ms} $\Gamma$ contains a spanning subgraph isomorphic to $\Gamma'$, we have $\alpha(\Gamma) \leq \alpha(\Gamma') < q$. Therefore, $|V(\Gamma^\ast)| \neq p$, for otherwise we would have $\Gamma^\ast\cong K_p$ (as seen in the beginning of this proof) and $\alpha(\Gamma) = q \a(\Ga^*) = q$ by (\ref{eq:nohom1}), a contradiction. Since $|V(\Gamma^\ast)|$ is neither $p$ nor $q$, $\Gamma$ must be a core.

\medskip
(d) Suppose that $p = 2^{2^{s-1}}+1$ is a Fermat prime (so that $s \ge 2$) and $U = \{u\}$. Let $\Psi$ be the rank-three graph with order $pq$ and valency $\val(\Psi) = 2^{2^{s-1}} + 2^{2^{s}}$ mentioned in \S\ref{subsec:rank3} (by setting $t = s-1$), so that $\Psi^*$ is a complete graph. Since $\Psi$ is an MS graph, by Definition \ref{defn:ms} and Theorem \ref{thm:sym ms}, it contains a spanning subgraph isomorphic to $\Gamma$. Since $\Psi^*$ is complete, either $\Psi^\ast \cong K_p$ or $\Psi^\ast \cong K_{q}$. However, since $q-1$ is not a divisor of $\val(\Psi)$ ($= 2^{2^{s-1}} + 2^{2^{s}}$), $\Psi^\ast \not \cong K_{q}$ by Theorem \ref{thm:val cores}. Thus $\Psi^\ast\cong K_p$. This together with the fact that $\Ga$ is isomorphic to a spanning subgraph of $\Psi$ implies that $\Gamma \rightarrow K_p$. On the other hand, by Theorem \ref{thm:clq>p}, $\omega(\Gamma)\geq p$ and so $\Gamma$ contains a copy of $K_p$ as an induced subgraph. Therefore, $\Gamma\leftrightarrow K_p$ and so $\Gamma^\ast \cong K_p^* = K_p$.

\medskip
(e) Suppose that $p=2^{2^{s-1}}+1$ is a Fermat prime (so that $s \ge 2$) and $U = U_{e, i}$. Then $2 \leq |U| \le p-2$ by (\ref{eq:U}). Note that $\Gamma(a, p, \mathbb{Z}_p^\ast, \mathbb{Z}_p\setminus U)$ is the complement of $\Ga$ and hence $\a(\Ga) = \omega(\Ga(a, p, \mathbb{Z}_p^\ast, \mathbb{Z}_p\setminus U))$. Thus, by applying Theorem \ref{thm:clq<q} to $\Ga(a, p, \mathbb{Z}_p^\ast, \mathbb{Z}_p\setminus U)$ and noting $|U| \ge 2$, we obtain
$$
\alpha(\Gamma) \leq \frac{2^{a}}{p-1}(p - |U|)+p-1 \leq \frac{2^{a}}{p-1}(p-2) + p-1 =2^{a} < q.
$$
If $|V(\Gamma^\ast)| = p$, then $\Gamma^\ast\cong K_p$ and so $\a(\Ga) = q \a(\Ga^*) = q$ by (\ref{eq:nohom1}), a contradiction. 
Thus $|V(\Gamma^\ast)| \neq p$. On the other hand, by Theorem \ref{thm:clq<q}, $\omega(\Gamma)\leq \frac{2^{a}}{p-1}|U| + 1< q$ since $|U| \le p-2$. Thus $\omega(\Ga^*) = \omega(\Ga) < q$ (by (\ref{eq:inva})) and so $|V(\Gamma^\ast)| \neq q$. Therefore, $\Gamma$ is a core. This completes the proof of Theorem \ref{thm:pq ms cores}.

\section{Proof of Theorem \ref{thm:main}}
\label{sec:main proof}

Let $p$ and $q$ be primes with $2 \le p < q$. By Theorem \ref{thm:circu} (\cite{MR884254, MR1223693, MR1223702}), an imprimitive symmetric graph of order $pq$ is in one of the following families: 

(a) The four graphs in Example \ref{ex:inc}, namely $X(\PG(d-1, r))$, $X'(\PG(d-1, r))$, $X(H(11)) \cong G(22, 5)$ and $X'(H(11))$. These graphs are bipartite and hence their cores are $K_2$. 

(b) Imprimitive symmetric circulant graphs of order $pq$ listed in the second column of Table \ref{tab:cir} (see Definitions \ref{defn:2qr}, \ref{defn:3qr} and \ref{defn:pqrsu} and Example \ref{ex:lexprod}). Their cores are given in Lemma \ref{lem:2qr} and Theorems \ref{thm:lexprod}, \ref{lem:2qr1}, \ref{thm:del lex prod}, \ref{thm:pqrsu cat}, \ref{thm:pqrsu core} and \ref{thm:pqrsu}, respectively, completing the third column of Table \ref{tab:cir}. 

Note that, by Lemma \ref{lem:3qr}, the core of $G(3q, r)$ ($q \ge 5$) is reduced to that of $G(3q; r, 2, r)$ when $r$ is even or that of $G(3q; r, 2, 2r)$ when $r$ is odd, which can be obtained by using Theorems \ref{thm:pqrsu cat}, \ref{thm:pqrsu core} and \ref{thm:pqrsu}. 

(c) Symmetric MS graphs described in Theorem \ref{thm:sym ms}. Their cores are given in Theorem \ref{thm:pq ms cores}, completing the third column of Table \ref{tab:2}. 

This completes the proof of Theorem \ref{thm:main}. 

\bigskip
{\bf Acknowledgements}~ We would like to thank the anonymous referees for their helpful comments. Rotheram would like to thank Gordon Royle for co-supervising his thesis, and Zhou is grateful to Chris Godsil and Gordon Royle for introducing him to cores of graphs. Rotheram was supported by an Australian Postgraduate Award and Zhou was supported by a Future Fellowship of the Australian Research Council.


\begin{thebibliography}{99}

\bibitem{MR791653}
M. O. Albertson, and K. L. Collins, Homomorphisms of {$3$}-chromatic graphs,
{\em Discrete Math.} 54 (1985), no. 2, 127--132.



\bibitem{MR2470534}
P. J. Cameron, and P. Kazanidis, Cores of symmetric graphs,
{\em J. Aust. Math. Soc.} 85 (2008), no. 2, 145--154.

\bibitem{MR0279000}
C. Chao, On the classification of symmetric graphs with a prime number of vertices,
{\em Trans. Amer. Math. Soc.} 158 (1971), 247--256.

\bibitem{MR884254}
Y. Cheng and J. Oxley, On weakly symmetric graphs of order twice a prime,
{\em J. Combin. Theory Ser. B} 42 (1987), no. 2, 196--211.

\bibitem{CGV}
B. Codenotti, I. Gerace and S. Vigna, Hardness results and spectral techniques for combinatorial problems on circulant graphs, {\em Linear Algebra Appl.} 285 (1998), 123--142.

\bibitem{D}
E. Dobson, Automorphism groups of metacirculant graphs of order a product of two distinct primes, {\em Combin. Probab. Comput.} 15 (2006), 105--130.

\bibitem{MR1829620}
C. Godsil and G. Royle, {\em Algebraic Graph Theory}, Springer-Verlag,  New York, 2001.

\bibitem{GR}
C. Godsil and G. Royle, {\em Interesting Graphs and Their Colourings}, preprint, 2008.

\bibitem{MR2813515}
C. Godsil and G. Royle, Cores of geometric graphs,
{\em Ann. Comb.} 15 (2011), no. 2, 267--276.

\bibitem{MR1468789}
G. Hahn and C. Tardif,  Graph homomorphisms: structure and symmetry, in: G.~Hahn and G. Sabidussi eds., {\em Graph Symmetry} (Montreal, 1996, NATO Adv. Sci. Inst. Ser. C, Math. Phys. Sci. 497), Kluwer Academic Publishing,  Dordrecht, 1997, pp.107--166.

\bibitem{MR2089014}
P. Hell and J. Ne{\v{s}}et{\v{r}}il, {\em Graphs and Homomorphisms}, Oxford University Press,  Oxford, 2004.

\bibitem{MR1788124}
W. Imrich and S. Klav{\v{z}}ar, {\em Product Graphs}, Wiley-Interscience, New York, 2000.

\bibitem{MR1866957}
M. K{\v{r}}{\'{\i}}{\v{z}}ek and F. Luca and L. Somer, {\em 17 Lectures on Fermat Numbers}, Springer-Verlag,  New York, 2001.

\bibitem{MR1804825}
B. Larose and C. Tardif, Hedetniemi's conjecture and the retracts of a product of graphs,
{\em Combinatorica} 20 (2000), no. 4, 531--544.

\bibitem{MR1174460}
D. Maru\v{s}i\v{c} and R. Scapellato, Characterizing vertex-transitive {$pq$}-graphs with an imprimitive automorphism subgroup,
{\em J. Graph Theory} 16 (1992), no. 4, 375--387.

\bibitem{MR1214891}
D. Maru\v{s}i\v{c} and R. Scapellato, Imprimitive representations of $\SL(2,2^k)$,
{\em J. Combin. Theory Ser. B} 58 (1993), no. 1, 46--57.

\bibitem{MR1289072}
D. Maru\v{s}i\v{c} and R. Scapellato, Classifying vertex-transitive graphs whose order is a product of two primes,
{\em Combinatorica} 14 (1994), no. 2, 187--201.

\bibitem{MR1223702}
C. E. Praeger, R. J. Wang and M. Y. Xu, Symmetric graphs of order a product of two distinct primes,
{\em J. Combin. Theory Ser. B} 58 (1993), no. 2, 299--318.

\bibitem{MR1244933}
C. E. Praeger and M. Y. Xu, Vertex-primitive graphs of order a product of two distinct primes,
{\em J. Combin. Theory Ser. B} 59 (1993), no. 2, 245--266.

\bibitem{Praeger97}
C. E. Praeger,  Finite transitive permutation groups and finite vertex transitive graphs, in: G.~Hahn and G. Sabidussi eds., {\em Graph Symmetry} (Montreal, 1996, NATO Adv. Sci. Inst. Ser. C, Math. Phys. Sci. 497), Kluwer Academic Publishing,  Dordrecht, 1997, pp.277--318.

\bibitem{R}
D. E. Roberson, Cores of vertex transitive graphs, {\em Electron. J. Combin.} 20 (2013), no. 2, Paper 45, 7 pp. 

\bibitem{MR1322111}
Z. Ryj{\'a}{\v{c}}ek and I. Schiermeyer, On the independence number in {$K_{1,r+1}$}-free graphs,
{\em Discrete Math.} 138 (1995), no. 1-3, 365--374.

\bibitem{Thas}
J. A. Thas, Projective geometry over a finite field, in: Handbook of Incidence Geometry (ed.
F. Buekenhout), Elsevier, Amsterdam, 1995, pp. 295--347.

\bibitem{2013arXiv1302.6652T}
A. Thomson and S. Zhou, Rotational circulant graphs,
{\em Discrete Appl. Math.} 162 (2014), 296--305.

\bibitem{MR1223693}
R. J. Wang and M. Y. Xu, A classification of symmetric graphs of order {$3p$},
{\em J. Combin. Theory Ser. B} 58 (1993), no. 2, 197--216.

\end{thebibliography}

\small

\end{document}